\newcommand{\uhr}{\upharpoonright}
\newcommand{\concat}{^\smallfrown}
\tikzstyle{block} = [draw,minimum size=1em]
\newcommand{\RCA}{\mathsf{RCA}_0}
\newcommand{\ACA}{\mathsf{ACA}_0}
\newcommand{\WKL}{\mathsf{WKL}_0}
\newcommand{\WWKL}{\mathsf{WWKL_0}}
\newcommand{\ATR}{\mathsf{ATR}_0}
\newcommand{\CDM}{\mathsf{CD}\text{-}\mathsf{M}}
\newcommand{\CDPB}{\mathsf{CD}\text{-}\mathsf{PB}}
\newcommand{\LwCA}{\mathsf{L_{\omega_1,\omega}}\text{-}\mathsf{CA}}
\newcommand{\makeset}[1]{|#1|}
\newcommand{\Decorate}{\operatorname{Decorate}}
\newcommand{\kO}{\mathcal O}
\newtheorem{theorem}{Theorem}[section]
\newtheorem{definition}[theorem]{Definition}
\newtheorem{proposition}[theorem]{Proposition}
\newtheorem{lemma}[theorem]{Lemma}
\newtheorem{corollary}[theorem]{Corollary}
\title{Completely determined Borel sets and measurability}
\author[L.\ Westrick]{Linda Westrick}
\address{Department of Mathematics\\
Penn State University\\
University Park, Pennsylvania U.S.A.}
\email{westrick@psu.edu}
\thanks{The author was supported by grant DMS-1854107 from the National Science Foundation of the United States and by the Cada R. and Susan Wynn Grove Early Career Professorship in Mathematics.  Part of the work was done during the author's IMS-supported visit to the Institute for Mathematical Sciences, National University of Singapore in 2019.}
\begin{document}

\begin{abstract}
We consider the reverse math strength of the statement
$\CDM$:
``Every completely determined Borel set is measurable.''
Over $\WWKL$, we obtain the following results analogous to the previously
studied category case:
\begin{enumerate}
\item $\CDM$ lies strictly between $\ATR$ and $\LwCA$.
\item Whenever 
$M\subseteq 2^\omega$ is the second-order part of an 
$\omega$-model of $\CDM$, then for every $Z \in M$,
there is a $R \in M$ such that $R$ is $\Delta^1_1$-random 
relative to $Z$.
\end{enumerate}  On the other hand, without $\WWKL$, 
all sets have measure zero (as measured according to $\CDM$), and
it follows vacuously that $\neg \WWKL$ 
implies $\CDM$ over $\RCA$.
\end{abstract}

\maketitle

\section{Introduction}

The notion of a \emph{completely determined Borel set} was 
introduced in \cite{ADMSW} to permit the reverse mathematics
analysis of weak principles involving Borel sets.  In the standard
treatment of Borel sets in reverse mathematics \cite{sosa}, 
a Borel set is any well-founded 
tree $T$ whose leaves are labeled with clopen sets and whose
interior nodes are labeled with intersections or unions.  A 
real $X \in 2^\omega$ is then said to belong to the set 
coded by $T$ if and only if there is an \emph{evaluation map},
a function $f:T\rightarrow \{0,1\}$ such that $f(\sigma) = 1$ 
if and only if $X$ is in the set coded by 
$T_\sigma := \{\tau : \sigma\concat \tau \in T\}$.  While 
\emph{arithmetic transfinite recursion} ($\ATR$) suffices 
to construct evaluation maps for each $X$, in general 
it is also required.  As a result, most principles concerning an
arbitrary Borel set reverse to $\ATR$ 
simply because most such principles have a conclusion 
that presupposes an element $X$ in the Borel set.

An exception was encountered by \cite{DFSW} in their 
analysis of the Borel dual Ramsey theorem.  The hypothesis 
of this theorem posits $\ell$-many Borel sets whose union 
is the entire space.  In order to say the union is the entire space, 
the existence of evaluation maps for each $X$ must 
be a part of the hypothesis.  That is, an instance of the 
Borel dual Ramsey theorem is not well-defined unless 
the given Borel sets are \emph{completely determined}, 
meaning that each $X$ has an evaluation map.  

This 
example fueled the idea that the lack of interesting reversals
for weak principles involving Borel sets could be remedied 
by restricting attention to completely determined Borel sets.  
This was borne out in \cite{ADMSW}, in which the following 
was proven about the principle $\CDPB$: ``Every completely 
determined Borel set has the property of Baire.''

\begin{theorem}[\cite{ADMSW}]
The principle $\CDPB$ is strictly weaker than $\ATR$.  Every 
$\omega$-model $\mathcal M$ of $\CDPB$ is closed under hyperarithmetic
reduction, and for every $Z \in M$, there is some $G \in \mathcal M$ 
that is $\Delta^1_1(Z)$-generic. 
\end{theorem}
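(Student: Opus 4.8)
The plan for the first clause, $\ATR\vdash\CDPB$, is to run the classical ``every Borel set has the property of Baire'' argument as an arithmetic transfinite recursion along the well-founded code tree $T$. To each node $\sigma$ I attach a pair $(U_\sigma,E_\sigma)$, with $U_\sigma$ open and $E_\sigma$ a code for a meager ($F_\sigma$) set, such that the set coded by $T_\sigma$ differs from $U_\sigma$ only on $E_\sigma$. Leaves code clopen sets, so there is nothing to do. At a union node take $U_\sigma=\bigcup_n U_{\sigma\concat\langle n\rangle}$ and $E_\sigma=\bigcup_n E_{\sigma\concat\langle n\rangle}$, using that a countable union of meager sets is meager. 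At an intersection node pass to complements: replace each open $U_{\sigma\concat\langle n\rangle}$ by the interior of its complement, throwing the (closed, nowhere dense) boundary into the error, unite these open sets, then complement and take the interior once more. Every step is arithmetic in the data of the children, so $\ATR$ legitimizes the recursion; the hypothesis ``completely determined'' is exactly what makes the set coded by $T$ meaningful, and it is used only to supply the evaluation maps. Reading off the root gives the witness.

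\textbf{Closure under hyperarithmetic reduction.} To see that every $\omega$-model $M\models\CDPB$ is closed under hyperarithmetic reduction, fix $Z\in M$ and prove by transfinite induction that $H^Z_\alpha\in M$ for all $\alpha<\omega_1^Z$. After first checking that $M$ is closed under the Turing jump (which then handles the successor steps routinely), the limit step carries the content. Assuming all $H^Z_\beta$ with $\beta<\alpha$ are in $M$, I would manufacture --- from $Z$ and a notation for $\alpha$, hence inside $M$ --- a Borel code $T$ that imitates the jump hierarchy up to level $\alpha$ but displaces the ``level-$\alpha$'' content onto a designated coordinate processed by an auxiliary sub-recursion. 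The design must secure two things at once: (i) the \emph{labels} of $T$ are arithmetic in $Z$ and the notation, so $T\in M$; and (ii) for \emph{every} $X\in M$ an evaluation map for $T$ at $X$ can be assembled level by level out of the $H^Z_\beta$, $\beta<\alpha$, already present, the value at the top (limit) node being forced to its default and verifiably so --- because $H^Z_\alpha\notin M$ exhibits a disagreeing coordinate. Then $T$ is completely determined in $M$, so $\CDPB$ supplies a property-of-Baire witness $(U,E)\in M$; but $T$ is rigged so that any such witness localizes the coded set to ``comeager'' or ``meager'' along a dense set of conditions in a pattern from which $H^Z_\alpha$ is Turing-computable, contradicting $H^Z_\alpha\notin M$.

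\textbf{Generics, and strictness.} For the generic, suppose $M\models\CDPB$, $Z\in M$, and, toward a contradiction, that $M$ has no $\Delta^1_1(Z)$-generic, so every real of $M$ is captured by some $\Sigma^1_1(Z)$-meager set. Using that $M$ is now known to be closed under hyperarithmetic reduction over $Z$, hence to contain and evaluate the Borel approximations $\mathcal M_Z^\alpha$ to the union $\mathcal M_Z$ of all such meager sets, I would build a completely-determined-in-$M$ Borel code $T$ of a self-referential flavour --- informally, the coded set enumerates the ways a real can fail to be $\Delta^1_1(Z)$-generic relative to the finite part of itself seen so far --- rigged so that no property-of-Baire witness for $T$ fits inside $M$: such a witness would force the coded set to be both meager and comeager on a common condition, or else the usual Baire-category extraction of a point in its open part off its meager part, which $M$ can carry out, would already be a $\Delta^1_1(Z)$-generic of $M$. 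For strictness, $\CDPB\not\vdash\ATR$, I would force (in the style of Steel, with tagged trees) a countable $\omega$-model $\mathcal N$ from finite approximations together with completeness promises guaranteeing that every CD Borel code entering $\mathcal N$ receives a property-of-Baire witness --- so $\mathcal N\models\CDPB$ by construction --- while side conditions prevent $\mathcal N$ from ever acquiring an infinite descending sequence through the Harrison linear order, so that $\mathcal N$ takes that order to be a well-order for which no iterated-jump hierarchy exists, whence $\mathcal N\not\models\ATR$.

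\textbf{The main obstacle.} In both $\omega$-model clauses the crux is to meet two opposing demands on each manufactured Borel code simultaneously: it must be \emph{completely determined} in the possibly weak model $M$, so its evaluation maps are buildable from what $M$ is already known to contain and never covertly require $\ATR$, yet its property-of-Baire witness must be strong enough to force the next jump, or a generic, into $M$. Threading this needle --- choosing just the right Borel presentation so that ``$M$ can evaluate it'' and ``its witness is informative'' hold together --- is the technical heart. For the strictness model the parallel difficulty is closure: verifying that \emph{every} CD Borel code of $\mathcal N$, including those appearing only in the limit of the forcing, actually has its witness in $\mathcal N$, which demands that the witness-building step be uniform and absolute enough to survive the construction.
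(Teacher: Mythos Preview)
This theorem is quoted from \cite{ADMSW} and is not proved in the present paper; however, the paper's proofs for the analogous $\CDM$ statements follow the method of \cite{ADMSW} closely, so one can compare your outline against that.

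Your upper-bound sketch is fine and matches the standard argument. For closure under hyperarithmetic reduction, your direct induction on $\alpha$ with hand-built codes is plausible but unnecessarily intricate: the route actually taken is to show that $\CDPB$ implies $\LwCA$ (encode each $\phi_n$ as a Borel set living in its own cone $[0^n1]$, then read the truth value off the Baire approximation on that cone), and $\LwCA$ is already a theory of hyperarithmetic analysis. This is both simpler and more robust than engineering a fresh code at each limit stage.

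The substantive divergence is in the last two clauses. For strictness, \cite{ADMSW} does not force in the style of Steel; it builds an explicit model $\mathcal M = \bigcup_i \Delta^1_1(\bigoplus_{k<i} G^{[k]})$ from the columns of a single $\Sigma^1_1$-generic $G$, shows $\ATR$ fails there because a computable pseudo-ordinal has neither a hierarchy nor a descending sequence in $\mathcal M$, and verifies $\CDPB$ by \emph{polling} the generics $p\concat G^{[\sigma,j]}$ about membership in each subtree $T_\sigma$. For the generic clause, the key device is the $\Decorate$ operation: one starts with an ill-founded (in $V$, well-founded in $\mathcal M$) tree ranked by a pseudo-ordinal, and at every node of rank $b$ attaches small ``decoration'' subtrees $P_b$, $N_b$ coding the set of reals that fail to be generic exactly at level $b$, split lexicographically by $H_b^Z$. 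If $\mathcal M$ contains no $\Delta^1_1(Z)$-generic then every $X\in\mathcal M$ is caught by some decoration, forcing the tree to be completely determined; but any Baire approximation to it then recovers $H_b^Z$ for every $b\in\kO^Z$ from the measure (here, category) of the level-two decoration subtrees, contradicting the absence of generics.

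Your ``self-referential'' code and your forcing construction are gestures toward the right phenomena, but they do not supply the mechanism that makes the argument go through. The decoration idea is precisely what threads the needle you identify at the end: the decorations guarantee complete determinacy (every non-generic is caught at its exact level and gets a $HYP$ evaluation map), while the lexicographic split by $H_b^Z$ is what makes the Baire witness informative. Without something playing that dual role, neither your hyperarithmetic-closure code nor your generic-extraction code can be completed as described.
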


In this paper we do the same for the principle ``every Borel 
set is measurable.''  Similar results are obtained by similar 
methods.  The only new twist is the need to  
work with
an appropriate meaning of  ``measurable'' for a Borel
set; there are several candidates.  
This delicate task has already been undertaken 
by Simpson, X. Yu, Brown, Giusto and others (see for 
example \cite[Chapter X]{sosa}, \cite{YuX1993}, \cite{YuX1994},
and \cite{BrownGiustoSimpson2002}).  We summarize 
their work and give the sometimes more detailed versions 
of the results needed for our
 application.

We then define the principle $\CDM$: ``Every completely 
determined Borel set is measurable.''  We show that 
$\CDM$ follows from $\neg \WWKL$ (for the simple reason 
that $\neg \WWKL$ implies the Cantor space has measure 0,
and thus every subset of it is also measure 0).
On the other hand, working over $\WWKL$, we obtain 
results similar to the category case. 

In \cite{ADMSW}, a model was constructed in which a Baire approximation 
to a given completely determined 
Borel set $B$ was obtained without $\ATR$ by polling 
$\Sigma^1_1(B)$-generics about their membership in $B$.  We 
do essentially the same to construct a proof of measurability of 
a given completely determined set $B$, but 
using $\Pi^1_1(B)$-randoms.  The result of this polling is exactly 
an element $f \in L^1(2^\omega)$, so no translation is required
to obtain a code for 
a measurable set as defined in \cite[Chapter X]{sosa}.
The main 
results of this paper are as follows.

\begin{theorem}
The principle $\WWKL+\CDM$ is strictly weaker than $\ATR$.  Every 
$\omega$-model $\mathcal M$ of $\WWKL+\CDM$ is closed under hyperarithmetic
reduction, and for every $Z \in \mathcal M$, there is some $R \in \mathcal M$ 
that is $\Delta^1_1(Z)$-random. 
\end{theorem}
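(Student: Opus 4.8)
The statement has three components, and I would establish them in turn: that $\ATR$ proves $\CDM$; that there is an $\omega$-model of $\CDM$ which is not a model of $\ATR$; and that every $\omega$-model of $\CDM$ (tacitly, of $\WWKL+\CDM$, since $\CDM$ is vacuous and uninformative without $\WWKL$) is closed under hyperarithmetic reduction and contains a $\Delta^1_1(Z)$-random over each of its members $Z$. The first is immediate from the survey of effective measure theory preceding the theorem: over $\ATR$ every Borel set is completely determined and, by the results recorded in \cite[Chapter X]{sosa}, has a code as a measurable set. The plan for the rest is to run the arguments of \cite{ADMSW} for $\CDPB$ with randomness in place of genericity; in a model closed under $\leq_h$ this means $\Delta^1_1$-randoms in place of $\Delta^1_1$-generics, since by the effective regularity theorems for $\Sigma^1_1$ sets every $\Sigma^1_1(Z)$ null set lies inside a $\Delta^1_1(Z)$ null set, so a $\Delta^1_1(Z)$-random already avoids every $\Sigma^1_1(Z)$ null set.

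For the strict separation I would build a countable $\omega$-model $\mathcal M$ of $\WWKL+\CDM$ failing $\ATR$. Take $M=\bigcup_n M_n$ with $M_0$ the computable sets and $M_{n+1}$ obtained from $M_n$ by closing under $\oplus$, $\leq_T$ and $\leq_h$ and then adjoining, for each $Z$ currently present, one real $R_Z$ that is $\Delta^1_1$-random relative to $Z$ and to everything adjoined so far. Since the relevant randoms are conull while the sets $\{R:\omega_1^{R\oplus P}>\omega_1^P\}$ are null, the $R_Z$ may in addition be chosen so that $\omega_1^X=\omega_1^{\mathrm{CK}}$ for every $X\in M$; a routine bookkeeping then makes $M$ closed under $\leq_h$ and closed under passing to a $\Delta^1_1(Z)$-random over any $Z\in M$. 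Such an $\mathcal M$ does not satisfy $\ATR$: exactly as for $\mathrm{HYP}\subseteq M$, $\Sigma^1_1$-separation fails, and one checks (as in \cite{ADMSW}) that neither the hyperarithmetic closures nor the adjoined randoms --- chosen to preserve $\omega_1^{\mathrm{CK}}$ --- introduce a separating set for a suitable pair of disjoint $\Sigma^1_1$ sets. It then remains only to see that $\mathcal M\models\WWKL+\CDM$, and this is the step that makes the separation strict.

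This is the technical heart, which I would phrase as: any $\omega$-model $M$ closed under $\oplus$, $\leq_T$, $\leq_h$ and containing a $\Delta^1_1(Z)$-random over each $Z\in M$ satisfies $\WWKL+\CDM$. The $\WWKL$ part is automatic. For $\CDM$, let a completely determined Borel set be presented in $M$ by a code $T$ together with its evaluation functional $F$. When $T$ happens to be well-founded in the ambient universe the coded set $B$ is genuinely $\Delta^1_1(T)$, and Suslin--Kleene supplies, hyperarithmetically in $T$ and hence already inside $M$, both $\mathbf 1_B$ and a code for it as a point of $L^1(2^\omega)$; no random is needed in this case. The role of the random is the general case, where $T$ is well-founded only inside $M$: I would take $R\in M$ that is $\Delta^1_1(T\oplus F)$-random (available since $T\oplus F\in M$), split it into mutually random columns $R^{\sigma,i}$ indexed by $\sigma\in 2^{<\omega}$ and $i\in\omega$ (van Lambalgen), and poll --- since $F$, $R$ and each $\sigma\concat R^{\sigma,i}$ lie in $M$ and $M$ is closed under $\leq_h$, the bits recording whether $F$ places $\sigma\concat R^{\sigma,i}$ in $B$ form a sequence in $M$. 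Their running averages converge, by the effective law of large numbers for randoms, to conditional densities along the cylinders $[\sigma]$; these form a martingale, and, with the hyperarithmetic rank data of $T$ to supply the missing modulus of convergence, one assembles a genuine fast Cauchy sequence of rational step functions, i.e.\ a point $f\in L^1(2^\omega)$ lying in $M$, which is then checked to agree almost everywhere with the set cut out by $F$. I expect the one real obstacle to be exactly this last assembly: certifying, in a model that need not prove $\ATR$ and in which $T$ need not be well-founded in $V$, that the polling data yields a legitimate $L^1$-code with a computable Cauchy modulus --- the point at which the randomness (delivering the values, uniformly in $\sigma$) and the hyperarithmetic comprehension (delivering the modulus) must be made to cooperate, and the reason the heuristic discussion above speaks of $\Pi^1_1$- rather than merely $\Delta^1_1$-randomness.

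For the remaining direction --- that every $\omega$-model of $\WWKL+\CDM$ is closed under $\leq_h$ and contains a $\Delta^1_1(Z)$-random over each of its members --- I would argue by coding, as in \cite{ADMSW}. For closure under $\leq_h$, fix $Z\in M$ and argue by transfinite induction on $\alpha<\omega_1^Z$ that $Z^{(\alpha)}\in M$: consider the Borel set that inside each cylinder $[0^n1]$ contains a copy of a $Z$-recursively coded Borel set $B_n$ of rank $\alpha_n$ (with $\alpha_n\uparrow\alpha$) whose measure has binary expansion $Z^{(\alpha_n)}$. Its code is recursive in $Z$ and, because the $m$-th cylinder involves only a rank-$\alpha_m$ computation, its evaluation maps are recursive in finite or (by the inductive hypothesis, available) $\alpha_m$-jumps of their argument joined with $Z$; thus $\mathcal M$ sees this Borel set as completely determined, so by $\CDM$ it has an $L^1$-code $f\in M$, and $\int_{[0^n1]}f$ recovers $\mu(B_n)$, hence $Z^{(\alpha)}$. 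Once $M$ is closed under $\leq_h$, a $\Delta^1_1(Z)$-random is produced just as $\WWKL$ produces a $Z$-random but relativized up the hyperarithmetic hierarchy: by effective inner regularity there is a $\Pi^0_1$ class, coded hyperarithmetically in $Z$ (hence in $M$), of positive measure and contained in the $\Delta^1_1(Z)$-randoms, and $\WWKL$ gives it a path in $M$. Assembling the three parts yields the theorem.
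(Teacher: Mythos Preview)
Your argument for producing a $\Delta^1_1(Z)$-random inside $\mathcal M$ does not work. You claim that ``by effective inner regularity there is a $\Pi^0_1$ class, coded hyperarithmetically in $Z$, of positive measure and contained in the $\Delta^1_1(Z)$-randoms,'' and then invoke $\WWKL$. But no such class exists: any $\Pi^0_1$ class $C$ whose tree is $\Delta^1_1(Z)$ has a leftmost path which is itself $\Delta^1_1(Z)$, and a $\Delta^1_1(Z)$ singleton is a $\Delta^1_1(Z)$ null set, so that path is not $\Delta^1_1(Z)$-random. More conceptually, there is no universal $\Delta^1_1(Z)$ test; the set of non-$\Delta^1_1(Z)$-randoms is a genuinely $\Pi^1_1(Z)$ null set, not a $\Delta^1_1(Z)$ one. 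The paper obtains the random by an entirely different route: using the $\Decorate$ machinery from \cite{ADMSW}, one builds (from a pseudo-ordinal available in $\mathcal M$) a decorated Borel code which is either not completely determined --- in which case any witness $X$ to non-determination is already $\Delta^1_1(Z)$-random --- or else is completely determined and hence has a measure decomposition, and that decomposition is shown to compute $H_b^Z$ for every $b\in\kO^Z$ (by reading off $H_b^Z$ as $\int f$ for the $f$ attached to a suitable level-two subtree), so that anything $1$-random relative to it is $\Delta^1_1(Z)$-random.

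Your separation argument also has a gap, in exactly the place you flag as the ``one real obstacle.'' You speak of the completely determined Borel set as presented by ``a code $T$ together with its evaluation functional $F$,'' and choose $R$ to be $\Delta^1_1(T\oplus F)$-random. But no such single $F$ is part of the data: complete determination only asserts that each $X$ has an evaluation map somewhere in $\mathcal M$, with no uniformity. Consequently your claim that ``the bits recording whether $F$ places $\sigma\concat R^{\sigma,i}$ in $B$ form a sequence in $M$'' is precisely the nontrivial step, and it is not clear it holds for an arbitrary model closed under $\leq_h$ and $\Delta^1_1$-randoms. The paper does not prove such a general statement; it works in the specific model $\bigcup_i \Delta^1_1\bigl(\bigoplus_{k<i} R^{[k]}\bigr)$ built from the columns of a single $\Pi^1_1$-random $R$. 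There a van Lambalgen argument (Proposition~\ref{prop:lwca}) shows that if $T\in\Delta^1_1(R_0)$ is completely determined in $\mathcal M$ then it is already completely determined in $\Delta^1_1(R_0)$, which is what puts the polling set $U=\{(p,\sigma,j): p\concat R_0^{[\sigma,j]}\in |T_\sigma|\}$ into $\Delta^1_1(R_0)$. The verification that the polled functions $\langle f_\sigma\rangle$ form a measure decomposition is then carried out by arithmetic transfinite induction along $T$ inside $\mathcal M$, using the mutual randomness of the column families $R_0^{[\sigma]}$ and $R_0^{[<\sigma]}$ at each node, rather than by extracting a global modulus from rank data as you suggest.
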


The related topic of measure-theoretic regularity (abbreviated MTR) 
was investigated by
Simpson in \cite{Simpson2009}.  By definition, an 
$\omega$-model $\mathcal M$ 
is an MTR-model if every set that is effectively Borel in a
parameter $X$ from $\mathcal M$ contains a $\Sigma^0_2(Y)$
subset of the same measure, for some $Y \in \mathcal M$.  The 
above theorem implies that every $\omega$-model of $\WWKL+\CDM$
is an MTR-model, because to be an MTR-model it suffices to be closed 
under hyperarithmetic reduction.  However, there are MTR-models 
which satisfy, for example, $\WWKL$ but not $\WKL$ 
(\cite[Theorem 7.4]{Simpson2009}).  So being an MTR-model is a strictly 
weaker notion than being an $\omega$-model of $\WWKL +\CDM$.

These results were first presented by the author at 
the Institute for Mathematical Sciences workshop Higher Recursion 
Theory and Set Theory in 2019, using a version of Proposition \ref{joinACA} 
to quickly move the base theory to $\ACA$, and using an ad hoc 
notion of a ``function measuring a set'' which was later found to 
essentially coincide with the notion of a measurable characteristic 
function previously proposed by Simpson and several of his collaborators.  
The author would like to thank Steve Simpson for his suggestion to 
lower the base theory and for bringing that 
connection to light.  Thanks go also to the anonymous referee who 
provided further helpful suggestions.
Finally, the author would like to thank Ted Slaman, 
her PhD advisor, for his support and mentorship, his good 
humor and sound principles, and his excellent body of 
research which this volume celebrates.

\section{Notation and Preliminaries}

We use the notation and conventions of \cite{ADMSW}.  In that paper, 
much more background and context can be found in the introduction. 
The $e$th Turing functional is denoted $\Phi_e$.
Elements of $\omega^{<\omega}$ are denoted by $\sigma, \tau$ 
and elements of $2^{<\omega}$ by $p,q$.  We write $\sigma\preceq \tau$ to 
indicate that $\sigma$ is an initial segment of $\tau$, with $\prec$ 
if $\sigma\neq \tau$.    
For $p \in 2^{<\omega}$, the notation $[p]$ refers to the cylinder 
$\{X \in 2^\omega : p \prec X\}$.
The empty 
string is denoted by $\lambda$.  A string with a single component 
of value $n\in\omega$ is denoted by $\langle n \rangle$.  String 
concatenation is denoted by $\sigma \tau$.  Usually 
we write $\sigma n$ instead of the more technically correct 
but uglier $\sigma \langle n \rangle$.  

If $U$ is a set of strings (for example, a tree, or a coded 
open subset of $2^\omega$), and $\sigma$ is any string, we write 
$\sigma \concat U$ to mean $\{\sigma  \tau : \tau \in U\}$.
If $T$ is a tree and $\sigma \in T$, we write
$T_\sigma$ to mean $\{\tau : \sigma \tau \in T\}$, 
and if $\langle n \rangle  \in T$, we write 
$T_n$ to mean $\{\tau : n\tau \in T\}$.

We assume familiarity with reverse mathematics, in particular the 
systems $\RCA$, $\WWKL$, $\ACA$ and $\ATR$.  We note that 
effective transfinite recursion and 
arithmetic transfinite induction can be carried out in $\ACA$.  
We identify an $\omega$-model $\mathcal M$ of second order arithmetic 
with its second-order part, writing $X \in \mathcal M$ to mean 
that $X$ is an element of the second-order part of $\mathcal M$.

We assume familiarity with ordinal notations and pseudo-ordinals.
Kleene's O is denoted by $\kO$.  The relation $<_\ast$ is the 
transitive closure of
the relation defined by 
$1<_\ast x$ if $x\neq 1$, $x <_\ast 2^x$, 
and $\Phi_e(n) <_\ast 3\cdot 5^e$. We will not distinguish 
between ordinals and their notations.  Additionally, if $b \in \kO$, we 
write $b+1$ for the successor of $b$ (rather than the 
more technically correct but cumbersome $2^b$) and $b+O(1)$ 
for the outcome of taking some fixed constant number of successors 
of $b$.  If $b \in \kO$ the unique jump hierarchy on $b$ is 
denoted $H_b$.  All these concepts can be relativized to 
an oracle $Z$.  Kleene's $\kO$ also has a $\Sigma^1_1$ superset 
$\kO^\ast$, defined as the intersection 
of all $X \in HYP$ such that $1 \in X$, $a \in X \implies 2^a \in X$, and 
$$\forall n [\Phi_e(n)\in X \text{ and } \Phi_e(n) <_\ast \Phi_e(n+1)] \implies 3\cdot 5^e \in X.$$
Observe also that $\kO$ is contained in $\kO^\ast$.  The elements 
of $\kO^\ast \setminus \kO$ are called pseudo-ordinals.
For more details, see the introduction of \cite{ADMSW}.

A $T\subseteq \omega^{<\omega}$ is well-founded if it has no 
infinite path.  If $T$ is any tree, 
and $\rho : T\rightarrow \kO^\ast$, we say that  $\rho$ 
\emph{ranks} $T$ if 
for all $\sigma$ and $n$ 
such that $\sigma\concat n \in T$, we have 
$\rho(\sigma \concat n) <_\ast \rho(\sigma)$, 
and 
 for each leaf $\sigma \in T$, $\rho(\sigma) = 1$. 
If $T$ is ranked by $\rho$ and $\rho(\lambda) = a$, 
we say that $T$ is \emph{$a$-ranked} by $\rho$.  
If $a \in \kO$ and $T$ is $a$-ranked then $T$ is well-founded, 
but it is possible and useful for an ill-founded tree to be ranked 
by a pseudo-ordinal.  A tree $T$ is \emph{alternating} if 
whenever $\sigma \in T$ is a $\bigcap$, then 
each $\sigma n \in T$ is either a $\bigcup$ or a leaf, 
and similarly if $\sigma \in T$ is a $\bigcup$, then each 
$\sigma n \in T$ is either a $\bigcap$ or a leaf.

A labeled \emph{Borel code} is a well-founded tree 
$T\subseteq \omega^{<\omega}$ whose leaves are labeled 
by basic open sets or their complements, and whose inner nodes
are labeled by $\bigcup$ or $\bigcap$.  The Borel set associated to a 
Borel code is defined by induction, 
interpreting the labels in the obvious way.
Any Borel set 
can be represented this way, by applying DeMorgan's laws to push
complementation out to the leaves.
A formula of $L_{\omega_1,\omega}$ is a well-founded tree whose 
interior nodes are labeled with $\bigwedge$ (conjunction) and $\bigvee$
(disjunction) and whose leaves are labeled with the symbols 
{\tt true} or {\tt false}.  

There is a computable procedure which, 
for any $b \in \kO$ and any $n \in \omega$, 
outputs a $b+O(1)$-ranked alternating
formula of $L_{\omega_1,\omega}$ which holds true if 
and only if $n \in H_b$.

If $T$ is a labeled Borel code and $X \in 2^\omega$,
an \emph{evaluation map}
for $X \in T$ is a function
$f:T\rightarrow \{0,1\}$ such that 
\begin{itemize}
  \item If $\sigma$ is a leaf, $f(\sigma) = 1$ if and only if $X$
    is in the clopen set coded by $\ell(\sigma)$.
  \item If $\sigma$ is a union node, $f(\sigma) = 1$ if and only
    if $f(\sigma\concat n) = 1$ for some $n\in \omega$.
  \item If $\sigma$ is an intersection node, $f(\sigma) = 1$ if and
    only if $f(\sigma\concat n) = 1$ for all $n \in \omega$.
  \end{itemize}
We say that $X$ is in the set coded by $T$, denoted $X \in \makeset T$, 
if there is an evaluation map $f$ for $X$ in $T$ such that 
$f(\lambda) = 1$.  Note that $X \in \makeset T$ is a 
$\Sigma^1_1$ statement.  In $\ACA$, evaluation maps are unique 
when they exist.  If $T$ is ill-founded, the notation $|T|$ may not have 
meaning outside of a given model.  If $T$ is a truly
well-founded Borel code, 
we do use $|T|$ outside of the context of a model 
to denote the elements of the set that $T$ codes.

A Borel code $T$ is \emph{completely determined} if every $X \in 2^\omega$ 
has an evaluation map in $T$.  A formula $\phi$ of $L_{\omega_1,\omega}$ 
is \emph{completely determined} if there is map $f:\phi\rightarrow \{\tt true, false\}$
that agrees with $\phi$ on the leaves and satisfies the logic of 
$\phi$ at interior nodes.  The principle $\LwCA$ states that whenever 
$\langle \phi_n \rangle_{n\in \omega}$ is a sequence of completely 
determined formulas of $L_{\omega_1,\omega}$, then $\{n : \phi_n \text{ is true}\}$ 
exists.

We assume familiarity with higher randomness.  The key theorems we need are:
\begin{theorem}[\cite{Stern1973,Stern1975}]\label{thm:nomoveomega1}
A real $R \in 2^\omega$ is $\Pi^1_1$-random if and only if it is $\Delta^1_1$-random 
and $\omega_1^R = \omega_1^{ck}$.
\end{theorem}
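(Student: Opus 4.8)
The plan is to prove the two implications separately, with the heart of the matter in the backward direction. Throughout, recall that $R$ is $\Delta^1_1$-random iff it avoids every hyperarithmetic null set, and $R$ is $\Pi^1_1$-random iff it avoids every $\Pi^1_1$ null set (equivalently, passes every $\Pi^1_1$ Martin--L\"of test). Since every hyperarithmetic null set is in particular a $\Pi^1_1$ null set, $\Pi^1_1$-randomness trivially entails $\Delta^1_1$-randomness, so the content is (i) that a $\Pi^1_1$-random $R$ satisfies $\omega_1^R=\omega_1^{ck}$, and (ii) that $\Delta^1_1$-randomness together with $\omega_1^R=\omega_1^{ck}$ already forces $\Pi^1_1$-randomness.

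For (ii) -- the main step -- let $U$ be an arbitrary $\Pi^1_1$ null set, written in the normal form $U=\{X : T_X\text{ is well-founded}\}$ for a uniformly recursive assignment $X\mapsto T_X\subseteq\omega^{<\omega}$. For each recursive ordinal $\xi$, set $U_\xi=\{X : T_X\text{ is well-founded of rank}\le\xi\}$. By the standard effective theory of $\Pi^1_1$ sets, each $U_\xi$ is $\Delta^1_1$ (uniformly in a notation for $\xi$), and since $U_\xi\subseteq U$ it is a hyperarithmetic null set; hence the $\Delta^1_1$-random real $R$ belongs to no $U_\xi$. Now suppose for contradiction that $R\in U$. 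Then $T_R$ is well-founded and, being recursive in $R$, its rank is at most the order type of its Kleene--Brouwer ordering, an $R$-recursive well-order, so $\operatorname{rank}(T_R)<\omega_1^R=\omega_1^{ck}$; thus $\operatorname{rank}(T_R)=\xi$ for some recursive $\xi$, giving $R\in U_\xi$, a contradiction. So $R\notin U$, and since $U$ was arbitrary, $R$ is $\Pi^1_1$-random.

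For (i), I would invoke Sacks's theorem that $S:=\{X : \omega_1^X>\omega_1^{ck}\}$ is a $\Sigma^1_1$ set of measure zero, together with Kechris's theorem that every $\Sigma^1_1$ null set is contained in a $\Pi^1_1$ null set. These produce a $\Pi^1_1$ null set containing $S$, which a $\Pi^1_1$-random $R$ must avoid; hence $\omega_1^R=\omega_1^{ck}$. Combined with the trivial $\Delta^1_1$-randomness this gives the forward direction, and with (ii) the equivalence follows.

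I expect the only real obstacle to be packaged inside step (i): Sacks's result that almost every real is low for $\omega_1^{ck}$ is substantial, and one must also be mindful of the light-face complexity of $S$ so that Kechris's covering theorem applies. Step (ii), by contrast, is a clean transfinite rank-counting argument once one has the normal form and the uniform hyperarithmeticity of the rank-bounded pieces $U_\xi$; the single delicate point there is the inequality $\operatorname{rank}(T_R)<\omega_1^R$ for a well-founded tree recursive in $R$, which follows since, by definition, $\omega_1^R$ is the least ordinal that is not the order type of an $R$-recursive well-order.
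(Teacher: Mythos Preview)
The paper does not prove this theorem; it is stated as a background result with a citation to Stern and no argument is given. Your write-up is therefore not being compared to anything in the paper itself.

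That said, your proof is the standard modern one and is correct. Direction (ii) is exactly the usual rank-bounding argument: slicing a $\Pi^1_1$ null set $U$ into the hyperarithmetic null layers $U_\xi=\{X:\operatorname{rank}(T_X)\le\xi\}$ and using $\omega_1^R=\omega_1^{ck}$ to trap $R$ in some $U_\xi$ if $R\in U$. For direction (i), your identification of $S=\{X:\omega_1^X>\omega_1^{ck}\}$ as $\Sigma^1_1$ is right (one way to see it: fix a Harrison pseudo-well-order $\prec$; then $\omega_1^X>\omega_1^{ck}$ iff there exist $a$ in the ill-founded part of $\prec$ and an $X$-jump hierarchy along $\prec$ up to $a$, a $\Sigma^1_1$ condition), Sacks gives $\mu(S)=0$, and Kechris's covering lemma produces a $\Pi^1_1$ null superset. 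One small comment: you could streamline (i) by noting that many presentations build a $\Pi^1_1$ Martin--L\"of test covering $S$ directly from Sacks's measure computation, avoiding the appeal to Kechris; but what you wrote is perfectly fine.
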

\begin{theorem}[\cite{HjorthNies2007}]
For $R_0,R_1 \in 2^\omega$, we have $R_0 \oplus R_1$ is $\Pi^1_1$-random 
if and only if $R_0$ and $R_1$ are relatively $\Pi^1_1$-random.
\end{theorem}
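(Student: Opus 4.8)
The plan is to derive the statement from Theorem~\ref{thm:nomoveomega1} together with its relativization, which let me trade ``$\Pi^1_1$-random'' for the conjunction of ``$\Delta^1_1$-random'' with an $\omega_1$-preservation clause. The real content will be the analogue of van Lambalgen's theorem for $\Delta^1_1$-randomness: \emph{$R_0\oplus R_1$ is $\Delta^1_1$-random if and only if $R_0$ is $\Delta^1_1(R_1)$-random and $R_1$ is $\Delta^1_1(R_0)$-random} (in fact the asymmetric form, with $R_0$ merely $\Delta^1_1$-random, will also come out). Granting this, the $\Pi^1_1$ statement is bookkeeping on $\omega_1$, carried out below.

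For the $\Delta^1_1$ van Lambalgen theorem I would run the two classical Fubini arguments, watching pointclasses. For ``$R_0\oplus R_1$ is $\Delta^1_1$-random $\implies$ $R_1$ is $\Delta^1_1(R_0)$-random'': present a putative $\Delta^1_1(R_0)$ null set containing $R_1$ uniformly as $A^{R_0}$ for a family $\langle A^X\rangle$ that is $\Delta^1_1$ in the oracle, and form $\widehat A=\{X\oplus Y: Y\in A^X\text{ and }\mu(A^X)=0\}$. Since the relation $Y\in A^X$ is $\Delta^1_1$ jointly, and the measure of a $\Delta^1_1(X)$-coded Borel set is itself $\Delta^1_1$ in $X$ (so $\{X:\mu(A^X)=0\}$ is $\Delta^1_1$), $\widehat A$ is a lightface $\Delta^1_1$ set; it is null by Fubini and contains $R_0\oplus R_1$, contradicting $\Delta^1_1$-randomness. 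For the converse, given a $\Delta^1_1$ null $G_\delta$ set $V=\bigcap_n U_n$ (uniformly open, decreasing) through $R_0\oplus R_1$, note that for each rational $q>0$ the set $\{X:\mu(V_X)>q\}=\bigcap_n\{X:\mu((U_n)_X)>q\}$ is again a $\Delta^1_1$ null $G_\delta$: the section-measure of an open set is lower semicontinuous and $\Sigma^0_1$-approximable, so each inner set is $\Delta^1_1$ and open, and nullity is Fubini. Hence $\Delta^1_1$-randomness of $R_0$ forces $\mu(V_{R_0})=0$, so $V_{R_0}$ is a $\Delta^1_1(R_0)$ null set, which the $\Delta^1_1(R_0)$-random real $R_1$ avoids; that is, $R_0\oplus R_1\notin V$, a contradiction.

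To finish: by Theorem~\ref{thm:nomoveomega1}, $R_0\oplus R_1$ is $\Pi^1_1$-random iff it is $\Delta^1_1$-random and $\omega_1^{R_0\oplus R_1}=\omega_1^{ck}$, and by its relativization $R_1$ is $\Pi^1_1(R_0)$-random iff it is $\Delta^1_1(R_0)$-random and $\omega_1^{R_0\oplus R_1}=\omega_1^{R_0}$ (symmetrically with the roles of $R_0$ and $R_1$ exchanged). Since $\omega_1^{ck}\le\omega_1^{R_i}\le\omega_1^{R_0\oplus R_1}$ for $i=0,1$, the clause $\omega_1^{R_0\oplus R_1}=\omega_1^{ck}$ is equivalent to ``$\omega_1^{R_0}=\omega_1^{ck}$ and $\omega_1^{R_0\oplus R_1}=\omega_1^{R_0}$'' and also to the version with $R_1$. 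Feeding these equivalences and the $\Delta^1_1$ van Lambalgen theorem into both directions yields ``$R_0\oplus R_1$ is $\Pi^1_1$-random iff $R_0$ is $\Pi^1_1$-random and $R_1$ is $\Pi^1_1(R_0)$-random''; this is symmetric in the two coordinates, hence equivalent to $R_0$ and $R_1$ being relatively $\Pi^1_1$-random.

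The step I expect to cause the most trouble is not the combinatorics of van Lambalgen's argument but the pointclass bookkeeping in the $\Delta^1_1$ stage --- checking that absorbing the oracle and then restricting to the null (or heavy) sections leaves one inside $\Delta^1_1$. This is exactly where the analogous statement breaks for ``weak'' randomness notions like weak $2$-randomness, since $\Pi^0_2$ is not closed under the handful of arithmetic operations that measures of open sections require; the point of routing through $\Delta^1_1$ is precisely that $\Delta^1_1$ \emph{is} closed under all of them, so no hypothesis beyond $\Delta^1_1$-randomness is needed, and Theorem~\ref{thm:nomoveomega1} then transports everything up to the $\Pi^1_1$ level for free.
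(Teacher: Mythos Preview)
The paper does not prove this theorem; it is quoted as a background result with a citation to \cite{HjorthNies2007}, so there is no argument in the paper to compare yours against. Your route via Theorem~\ref{thm:nomoveomega1} together with a $\Delta^1_1$-level van Lambalgen theorem is a reasonable one, and the converse direction of your $\Delta^1_1$ lemma as well as the $\omega_1$ bookkeeping are correct as written.

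There is, however, a genuine gap in the forward direction of your $\Delta^1_1$ van Lambalgen lemma. You assert that a $\Delta^1_1(R_0)$ null set can be presented ``uniformly as $A^{R_0}$ for a family $\langle A^X\rangle$ that is $\Delta^1_1$ in the oracle.'' A $\Delta^1_1(R_0)$ set lives at some level $a\in\kO^{R_0}$, and when $\omega_1^{R_0}>\omega_1^{ck}$ there is no reason for $a$ to lie in $\kO$; the family $X\mapsto A^X$ is then only well-defined on $\{X:a\in\kO^X\}$, a $\Pi^1_1$ condition on $X$, so your $\widehat A$ need only be $\Pi^1_1$ rather than $\Delta^1_1$, and no contradiction with $\Delta^1_1$-randomness of $R_0\oplus R_1$ follows. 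This is exactly the ``pointclass bookkeeping'' you flag in your last paragraph, but the closure you need fails here, not at the measure-of-sections step.

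For the $\Pi^1_1$ statement you are actually after, the gap is harmless and easily closed by reordering. In the forward direction Theorem~\ref{thm:nomoveomega1} already gives $\omega_1^{R_0\oplus R_1}=\omega_1^{ck}$, hence $\omega_1^{R_0}=\omega_1^{ck}$, and then every $\Delta^1_1(R_0)$ null set really does sit at a level $a\in\kO$; the family $A^X$ obtained from the same reduction to $H_a^X$ is genuinely lightface $\Delta^1_1$ in $X$, and your Fubini argument goes through verbatim. So: extract $\omega_1^{R_0}=\omega_1^{ck}$ first, and only then run the section argument under that hypothesis, rather than claiming the unrestricted $\Delta^1_1$ van Lambalgen as an intermediate lemma.
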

\begin{theorem}[\cite{ChongNiesYu2008}]
If $R_0\oplus R_1$ is $\Pi^1_1$-random, then $\Delta^1_1(R_0) \cap \Delta^1_1(R_1) = \Delta^1_1$.
\end{theorem}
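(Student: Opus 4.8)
The plan is to reduce, using Theorem~\ref{thm:nomoveomega1} and the theorem of Hjorth and Nies, to the following claim: \emph{if $C\leq_h R$, $R$ is $\Pi^1_1$-random relative to $C$, and $\omega_1^R=\omega_1^{ck}$, then $C\in\Delta^1_1$.} For the reduction, suppose $C\in\Delta^1_1(R_0)\cap\Delta^1_1(R_1)$ with $R_0\oplus R_1$ being $\Pi^1_1$-random. Theorem~\ref{thm:nomoveomega1} applied to $R_0\oplus R_1$ gives $\omega_1^{R_0\oplus R_1}=\omega_1^{ck}$, hence $\omega_1^{R_0}=\omega_1^{R_1}=\omega_1^{ck}$. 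By the Hjorth--Nies theorem, $R_0$ is $\Pi^1_1$-random relative to $R_1$; since $C\leq_h R_1$ and $\Pi^1_1$ is closed under hyperarithmetic reducibility of its real parameter, every $\Pi^1_1(C)$ null set is a $\Pi^1_1(R_1)$ null set, so $R_0$ is $\Pi^1_1$-random relative to $C$. Thus $(R_0,C)$ satisfies the hypotheses of the claim, which yields $C\in\Delta^1_1$.

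To prove the claim, assume for contradiction that $C\notin\Delta^1_1$; I will contradict the $\Pi^1_1$-randomness of $R$ over $C$. Since $\omega_1^R=\omega_1^{ck}$, the relation $C\leq_h R$ unwinds to $\Phi_j^{H_d^R}=C$ for some genuine notation $d\in\kO$ and some index $j$. Consider
\[
N \;=\; \{\, X\in 2^\omega : \exists d\in\kO\ \exists j\ (\Phi_j^{H_d^X}\text{ is total and equals }C)\,\},
\]
so that $R\in N$. It suffices to show that $N$ is a $\Pi^1_1(C)$ null set.

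\emph{$N$ is null.} Fix $d\in\kO$, $j\in\omega$, and set $N_{d,j}=\{X:\Phi_j^{H_d^X}=C\}$. If $\mu(N_{d,j})>0$, then by the Lebesgue density theorem there is a cylinder $[p]$ with $\mu(N_{d,j}\cap[p])>\tfrac34\mu([p])$. For $n\in\omega$ and $v\in\{0,1\}$ put $A_{n,v}=\{X\in[p]:\Phi_j^{H_d^X}(n){\downarrow}=v\}$; this is a hyperarithmetic Borel set, uniformly in $n,v$ (for fixed $d\in\kO$ the map $X\mapsto H_d^X$ is lightface Borel of rank $<\omega_1^{ck}$). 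Since $N_{d,j}\cap[p]\subseteq A_{n,C(n)}$ while $A_{n,1-C(n)}$ is disjoint from $N_{d,j}\cap[p]$, we get $\mu(A_{n,C(n)})>\tfrac34\mu([p])$ and $\mu(A_{n,1-C(n)})<\tfrac14\mu([p])$, so $C(n)$ is the unique $v$ with $\mu(A_{n,v})>\tfrac12\mu([p])$. As the measures of uniformly hyperarithmetic Borel sets form a $\Delta^1_1$ sequence of reals, $C$ is then computable from a $\Delta^1_1$ real, contradicting $C\notin\Delta^1_1$. Hence each $N_{d,j}$ is null, and therefore so is the countable union $N=\bigcup_{d\in\kO,\,j}N_{d,j}$.

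\emph{$N$ is $\Pi^1_1(C)$, and the main obstacle.} For $d\in\kO$ the jump hierarchy $H_d^X$ has a presentation from $X$ that is simultaneously $\Sigma^1_1$ and $\Pi^1_1$, uniformly in the notation $d$; so for fixed $d,j$ the predicate ``$\Phi_j^{H_d^X}$ is total and equals $C$'' is a conjunction over $n$ of statements Boolean in $\Delta^1_1(X)$-atoms and $C$, hence $\Pi^1_1$ with parameters $X$ and $C$ only. Since $\Pi^1_1$ is closed under number quantification and ``$d\in\kO$'' is $\Pi^1_1$, prefixing ``$\exists d\in\kO\ \exists j$'' keeps $N$ at level $\Pi^1_1(C)$; and, because $\kO$ is $\Pi^1_1$-complete, $N$ is genuinely $\Pi^1_1(C)$ rather than $\Delta^1_1(C)$, which is exactly why the full $\Pi^1_1$-randomness hypothesis (not merely $\Delta^1_1$-randomness) is used. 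It is crucial that the witnessing notation lies in $\kO$ and not in its $\Sigma^1_1$ superset $\kO^\ast$; this is what $\omega_1^R=\omega_1^{ck}$, i.e.\ Theorem~\ref{thm:nomoveomega1}, buys us. The most delicate step is precisely this complexity bookkeeping: confirming that, uniformly over $d\in\kO$, $H_d^X$ really admits a $\Delta^1_1$-in-$X$ presentation and that $N$ is genuinely lightface $\Pi^1_1$ in $C$ --- neither $\Pi^1_2$ nor merely $\Pi^1_1$ over a hyperarithmetic oracle. With that in hand, the measure computation and the appeals to the two cited theorems are routine.
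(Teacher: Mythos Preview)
The paper does not prove this theorem; it is quoted from \cite{ChongNiesYu2008} as a preliminary fact, so there is no argument in the paper to compare against.

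Your proof is essentially the standard one and is correct in outline: reduce via Stern and Hjorth--Nies to showing that if $C\leq_h R$, $R$ is $\Pi^1_1(C)$-random, and $\omega_1^R=\omega_1^{ck}$, then $C\in\Delta^1_1$; then run the Sacks-style majority-vote argument to see that each $N_{d,j}=\{X:\Phi_j^{H_d^X}=C\}$ is null when $C\notin\Delta^1_1$.

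One point should be tightened. You package the contradiction by assembling the full union $N=\bigcup_{d\in\kO,\,j}N_{d,j}$, arguing it is a $\Pi^1_1(C)$ null set, and invoking $\Pi^1_1(C)$-randomness of $R$ to exclude $R$ from $N$. Whether ``$\Pi^1_1$-random'' (in the sense characterized by Theorem~\ref{thm:nomoveomega1}) automatically avoids \emph{every} $\Pi^1_1$ null set is a genuine question of terminology that varies across the literature; under the $\Pi^1_1$-ML reading it does not follow immediately. You can sidestep this entirely: once $\omega_1^R=\omega_1^{ck}$ gives you a \emph{specific} $d_0\in\kO$ and $j_0$ with $C=\Phi_{j_0}^{H_{d_0}^R}$, the single set $N_{d_0,j_0}$ is already $\Delta^1_1(C)$ (it is $\Pi^0_{|d_0|+O(1)}(C)$) and null by your density argument. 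Since $\Pi^1_1(C)$-randomness certainly implies $\Delta^1_1(C)$-randomness, $R\notin N_{d_0,j_0}$ is the desired contradiction. This also sharpens your remark about where the full $\Pi^1_1$-randomness hypothesis is used: it is needed (via Theorem~\ref{thm:nomoveomega1}) to place the witnessing notation $d_0$ in $\kO$ rather than merely in $\kO^R$, not to avoid the null set itself. With that adjustment the complexity bookkeeping you flag as ``the most delicate step'' becomes unnecessary.
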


\section{Measure theory in reverse mathematics}

Historically, measure theory developed as a third-order theory.
Classically, a measure is a set function from a $\sigma$-algebra of subsets of a
space to the non-negative reals.  Therefore, although much of 
measure theory can be developed within second-order arithmetic,
this development has required some care and some non-trivial choices.
We now summarize work of Simpson, X. Yu, Brown, and Giusto
\cite{YuX1990, SimpsonYu1990, YuX1993, YuX1994, BrownGiustoSimpson2002, sosa},
in which this development took place.

In the context of second-order arithmetic, all the relevant 
information about a measure space $(X,\mu,\mathcal S)$ 
is already contained in the values 
that $\mu$ takes on an algebra which generates $\mathcal S$
as a $\sigma$-algebra.  When $X$ is a separable complete metric space 
space and $\mathcal S$ is the Borel sets, a countable generating algebra
is naturally obtained by 
taking all finite Boolean combinations of basic open sets.  In the case 
of Cantor space $2^\omega$, this approach works out very cleanly 
because the basic open sets (and thus all elements of the 
generating algebra) are clopen.  However, for an
arbitrary separable complete metric space, a problem arises.
What if there is an atom on the boundary of a basic open set $U$?
Is it fair to ask that our encoding of a measure $\mu$
be able to precisely compute $\mu(U)$ and $\mu(U^c)$?
(Because a typical open set $V$ can only be represented as 
an infinite enumeration of its basic open subsets, its 
measure $\mu(V)$ would be at best 
c.e., not computable, in a description of $\mu$ and $V$.)
Another way of asking the same question is: for the purposes
of constructive mathematics, what is a suitable 
topology to put on the space of Borel measures on $X$?

When $X$ is Cantor space, a popular representation
choice has been
to to name a measure $\mu$ 
with a function from $2^{<\omega}$ to $\mathbb R$ which records 
the measure of each basic clopen set (see for example 
\cite{DayMiller2013}). This representation induces the so-called 
weak topology on the space of probability measures on $X$
(see for example \cite[Definition 8.2.1]{Bogachev-V2}). 
This is the same topology induced by the 
Prohorov metric (see for example \cite[Theorem 8.3.2]{Bogachev-V2}), 
and also coincides with the weak-$\ast$ 
topology on $C(X)^\ast$ (see the discussion following 
Definition 8.2.1 in \cite{Bogachev-V2}).
Restricting attention to probability measures on compact 
complete separable metric spaces,
Yu also settled on the same topology in \cite{YuX1993}, and 
made the following definition.

\begin{definition}\label{def:1} Let $X$ be a compact complete separable metric space.  A 
\emph{Borel probability measure} $\mu$ on $X$ is 
a bounded positive 
linear functional $\mu:C(X) \rightarrow \mathbb R$ with 
$\mu(1) = 1$.  
\end{definition}
Here $C(X)$ denotes the Banach space of 
continuous real-valued functions on $X$ with the 
supremum norm, and $1 \in C(X)$ denotes the constant function.
Care is required in the definition of $C(X)$.  It is 
not simply the collection of continuous function on $X$ 
equipped with the supremum norm, because in weak subsystems 
of second-order arithmetic, a continuous function on 
a compact space $X$ need not have a supremum.  Instead, 
$C(X)$ is defined as a complete separable metric space by 
choosing a particularly well-behaved collection of continuous 
functions to be the dense subset.  The details are given in
\cite[Exercise 4.2.13]{sosa}, in which it is also established that 
$C(X)$ consists of precisely those continuous 
functions from $X$ to $\mathbb R$ which also possess a 
modulus of uniform continuity.  Therefore, while a measure $\mu$ on $X$ 
is defined by specifying how to integrate elements of $C(X)$ 
with respect to $\mu$, it does not follow that every 
continuous function on $X$ is $\mu$-integrable; only those with a 
modulus of uniform continuity come with this guarantee.

An unavoidable drawback to Definition \ref{def:1} is that it puts a small 
distance between the definition of a measure and its basic 
function of assigning sizes to sets.  Therefore, it is necessary to 
make a further definition for ``the measure of an open set'' 
(and subsequently a further definition for the measure of an 
arithmetic set, etc. leading up to the notion of a measurable set).
At each point of definition, a choice arises: should the measure 
assignment be 
\emph{intensional} (depending only on the \emph{description} 
of the set in question)
or \emph{extensional} (depending on only on the \emph{membership} of the set 
in question)?

To understand the tension here, consider that if 
$U$ is any component of a universal 
Martin-L\"of test in Cantor space with its usual fair-coin measure, 
then statement $U = 2^\omega$ holds in $REC$.  Thus in 
$REC$, we cannot simultaneously have both of these two desirable properties:
\begin{enumerate}
\item If $S\subseteq 2^{<\omega}$ is prefix-free, then
    $\mu\left(\bigcup_{\sigma \in S} [\sigma]\right) = \sum_{\sigma \in S} 2^{-|\sigma|}$
\item If $A = B$ then $\mu(A) = \mu(B)$.
\end{enumerate}
Note that the first is an intensional property and the second is an extensional 
property.
Although both are clearly wanted, the second seems more essential.
Thus the extensional definition for the measure of an open set 
is the one which 
appears in \cite{sosa}.

\begin{definition}[$\RCA$]\label{def:2}
Let $\mu$ be a Borel probability measure on $X$.  Let $U$ be an open 
subset of $X$.  The \emph{$\mu$-measure} of $U$ is defined as 
$$\mu(U) = \sup \{ \mu(f) : f \in C(X), 0 \leq f \leq 1, f(x) = 0 \text{ for } x\in X \setminus U\}.$$
\end{definition}

In the absence of $\ACA$, this supremum may not exist as a number, 
but statements about $\mu(U)$ may still be made in weaker systems 
by simply substituting the above definition of $\mu(U)$ in any 
sentence which makes a claim about this quantity.  For example, 
it holds in $\RCA$ that $U\subseteq V$ implies that $\mu(U)\leq \mu(V)$.
Such statements are said to hold in a ``virtual'' 
or ``comparative'' sense.

Observe that this extensional 
definition also gives the ``right'' values on Cantor space with the fair 
coin measure when $U$ is a finite union of non-intersecting 
cylinders $U= \cup_{i<n} [p_i]$.  That is, 
$\mu(\cup_{i<n} [p_i]) = \sum_{i<n} 2^{-|p_i|}$.

On the other hand, in $\RCA$ we can always assume that open subsets 
of Cantor space are given by prefix-free enumerations of elements 
of $2^{<\omega}$, so we can also give the following intensional definition of 
measure of an open set in Cantor space:

\begin{definition}[$\RCA$]
If $U$ is an open subset of $2^\omega$ given by $U = \bigcup_{i<\omega} [p_i]$,
where each $p_i \in 2^{<\omega}$ and
where $\{p_i : i \in \omega\}$ is prefix-free, then define the \emph{intensional 
measure} of $U$ by $\mu_I(U) = \sum_i 2^{-|p_i|}$.
\end{definition}

The intensional and extensional definitions fully coincide under
$\WWKL$.
\begin{theorem}[\cite{SimpsonYu1990}; see also \cite{BrownGiustoSimpson2002}]
Over $\RCA$, $\WWKL$ is equivalent to the statement that for every 
compact separable metric space $X$ and every measure $\mu$ on 
$X$, $\mu$ is countably additive.  That is, for every sequence of 
open sets $U_n$,
$$\lim_N \mu(\cup_{n<N} U_n) = \mu(\bigcup_n U_n).$$
\end{theorem}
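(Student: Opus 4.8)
The plan is to establish the equivalence over $\RCA$, the implication from $\WWKL$ to countable additivity being the substantial direction. For the reversal I argue contrapositively: assuming $\neg\WWKL$, a standard reformulation supplies a sequence $\langle p_i\rangle_{i\in\omega}$ in $2^{<\omega}$ with $\sum_i 2^{-|p_i|} < 1$ and $\cup_i [p_i] = 2^\omega$ (such an open set is the complement of the path set of a subtree of $2^{<\omega}$ of positive intensional measure with no path). Take $X = 2^\omega$ with its fair-coin measure $\mu$, which is a Borel probability measure in the sense of Definition \ref{def:1} already in $\RCA$, and set $U_n = [p_n]$. Each finite union $\cup_{n<N} U_n$ is clopen, so after refining to pairwise disjoint cylinders one gets $\mu(\cup_{n<N} U_n) \le \sum_{n<N} 2^{-|p_n|}$, hence $\lim_N \mu(\cup_{n<N} U_n) \le \sum_n 2^{-|p_n|} < 1$. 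But $\cup_n U_n = 2^\omega$, so $\mu(\cup_n U_n) = \mu(1) = 1$, and countable additivity already fails for this one measure; a fortiori, the general statement implies $\WWKL$.

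For the forward direction I would first reduce to Cantor space: by the methods of the cited works a compact complete separable metric space is a continuous image of $2^\omega$, and a Borel probability measure on it transfers to one on $2^\omega$ in a way under which countable additivity pulls back, so it suffices to treat $X = 2^\omega$. The heart of the matter is the claim that, assuming $\WWKL$, for every open $U \subseteq 2^\omega$ and every prefix-free $\langle s_k\rangle$ with $U = \cup_k [s_k]$ one has $\mu(U) = \sum_k \mu([s_k])$, the analogue for an arbitrary $\mu$ of the coincidence of intensional and extensional measure recorded just above. Granting this, countable additivity follows quickly: given open $U_n = \cup_i [p_{n,i}]$ with $U = \cup_n U_n$, refine the combined list of cylinders to a prefix-free enumeration $\langle s_k\rangle$ of $U$, arranging --- by only ever splitting a cylinder into finitely many sub-cylinders --- that each $[s_k]$ lies inside one of the $[p_{n,i}]$, hence inside some $U_{n_k}$. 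Then for each $m$ one has $\cup_{k<m}[s_k] \subseteq \cup_{n \le N} U_n$ with $N = \max_{k<m} n_k$, so by monotonicity and finite additivity on clopen sets $\sum_{k<m}\mu([s_k]) = \mu(\cup_{k<m}[s_k]) \le \mu(\cup_{n \le N} U_n) \le \mu(U)$; letting $m \to \infty$ and applying the claim squeezes $\mu(U) = \sum_k\mu([s_k]) \le \lim_N \mu(\cup_{n\le N} U_n) \le \mu(U)$.

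To prove the claim, the inequality $\mu(U) \ge \sum_k\mu([s_k])$ is again monotonicity and finite additivity. For $\mu(U) \le \sum_k\mu([s_k])$, fix $\eps > 0$, pick $f \in C(2^\omega)$ with $0 \le f \le 1$, $f = 0$ off $U$ and $\mu(f) > \mu(U) - \eps$, and use a modulus of uniform continuity for $f$ to produce a clopen step function $0 \le g \le f$ supported on a \emph{finite} union of cylinders $W \subseteq U$ with $\mu(W) \ge \mu(g) > \mu(f) - \eps$. Writing $W = \cup_{l<m'} [\tau_l] \subseteq \cup_k [s_k]$, for each $l$ either some $s_k \preceq \tau_l$, so that $[\tau_l] \subseteq [s_k]$, or every $s_k$ meeting $[\tau_l]$ properly extends $\tau_l$, so that $\{s_k : s_k \succ \tau_l\}$ is a prefix-free family of cylinders covering $[\tau_l]$. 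This is where $\WWKL$ enters: since $\WWKL$ is robust under change of the underlying measure (cf.\ \cite{BrownGiustoSimpson2002}), if $\sum_{s_k \succ \tau_l}\mu([s_k]) < \mu([\tau_l])$ then the tree of strings in $[\tau_l]$ extending no $s_k$ would have positive intensional $\mu$-measure yet no path, contradicting $\WWKL$ for $\mu$. Summing over $l$, using that the $[\tau_l]$ are pairwise disjoint and that each $s_k$ is relevant to at most one $\tau_l$, yields $\sum_k\mu([s_k]) \ge \mu(W) > \mu(U) - 2\eps$, and letting $\eps \to 0$ finishes the claim.

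The step I expect to be the main obstacle is exactly this $\WWKL$-covering fact and the two equivalences it leans on: that $\WWKL$ is equivalent to its version for an arbitrary Borel measure on $2^\omega$, and that the general compact metric space case reduces to Cantor space --- these being the points where the cited work of Simpson, Yu, Brown, and Giusto does its real labor. A secondary nuisance is that without $\ACA$ the quantities $\mu(U)$, $\mu(f)$ and the various sums exist only comparatively, so each inequality above must be read schematically in terms of its defining expressions, and the refinements and step-function approximations must be arranged to stay within $\RCA$.
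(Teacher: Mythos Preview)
The paper does not prove this theorem: it is stated with attribution to \cite{SimpsonYu1990} and \cite{BrownGiustoSimpson2002} and no argument is given, since its role in the paper is purely as a background fact used to align the intensional and extensional notions of measure. There is thus no ``paper's own proof'' to compare your proposal against.

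That said, your sketch is broadly on the right track and matches the shape of the arguments in the cited references. The reverse direction is exactly right. For the forward direction, your key claim (that under $\WWKL$ the extensional measure of an open set in $2^\omega$ equals the sum of the $\mu$-measures of a prefix-free cover) and its proof via a compactness/covering argument are the essential content, and you have correctly identified the two places where real work is being deferred to the literature: (i) the equivalence of $\WWKL$ with its version for an arbitrary Borel probability measure, and (ii) the reduction from a general compact complete separable metric space to $2^\omega$. Point (i) is genuinely nontrivial and is handled in \cite{BrownGiustoSimpson2002}; your invocation of it in the covering step is the right move but not something you could prove in passing. Point (ii) is a bit more delicate than your one-line description suggests --- one does not simply ``transfer'' a measure and have countable additivity ``pull back''; the actual argument in the literature works more directly with the Riesz-representation-style definition of $\mu$ and approximation of characteristic functions of open sets by elements of $C(X)$, rather than via a surjection from $2^\omega$. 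If you intend to fill this in yourself, that reduction step would need to be spelled out carefully.
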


\begin{corollary}[$\WWKL$]
For all open sets $U \subseteq 2^\omega$, $\mu(U) = \mu_I(U)$.
\end{corollary}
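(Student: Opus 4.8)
The plan is to derive this from the preceding theorem (Simpson--Yu) together with the extensional definition of $\mu(U)$ and the obvious comparative inequalities available already in $\RCA$. Fix an open set $U \subseteq 2^\omega$. As noted in the text, over $\RCA$ we may assume $U$ is presented by a prefix-free enumeration $U = \bigcup_i [p_i]$, so that $\mu_I(U) = \sum_i 2^{-|p_i|}$ is a well-defined real. Write $U_N = \bigcup_{i < N} [p_i]$ for the finite subunions. Since each $U_N$ is a finite union of disjoint cylinders, the observation recorded after Definition \ref{def:2} gives $\mu(U_N) = \sum_{i<N} 2^{-|p_i|}$, and these quantities are genuine numbers even in $\RCA$.

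Next I would invoke countable additivity. By the Simpson--Yu theorem, $\WWKL$ yields $\lim_N \mu(U_N) = \mu(U)$; in particular the right-hand side exists as a real number, and it equals $\lim_N \sum_{i<N} 2^{-|p_i|} = \sum_i 2^{-|p_i|} = \mu_I(U)$. This is really the whole argument: the intensional measure is by definition the limit of the finite partial sums, the finite partial sums are exactly the extensional measures of the finite subunions, and $\WWKL$ is precisely what is needed to pass this limit through to $\mu(U)$ itself.

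One point deserving a little care — and the step I expect to be the mild obstacle — is the bookkeeping around the "virtual" status of $\mu(U)$. In weak systems $\mu(U)$ is not a priori a number but only a defined expression (a supremum over test functions in $C(X)$), so before taking the limit one should check that under $\WWKL$ this supremum does converge to a real. But the Simpson--Yu theorem, applied to the sequence $\langle U_n \rangle$ with $U_n = [p_n]$, already delivers exactly the statement $\lim_N \mu(\bigcup_{n<N}[p_n]) = \mu(\bigcup_n [p_n])$, with both sides interpreted in the comparative sense; and since the left side is a convergent sequence of reals (bounded by $1$, nondecreasing), the equation forces $\mu(U)$ to be an actual real equal to that limit. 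I would spell this out in one or two sentences to make clear that no hidden use of $\ACA$ is needed, and then conclude $\mu(U) = \mu_I(U)$.
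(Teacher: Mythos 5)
Your main line of argument is the intended one: present $U$ by a prefix-free enumeration, note that for the finite subunions $U_N=\bigcup_{i<N}[p_i]$ the extensional and intensional values agree (this is exactly the observation recorded after Definition \ref{def:2}), and then pass to the union via the Simpson--Yu countable additivity theorem. That is the derivation the corollary is meant to have.

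However, the final paragraph of your proposal contains a genuine error. You assert that $\mu_I(U)=\sum_i 2^{-|p_i|}$ is a well-defined real and that, since $\langle\mu(U_N)\rangle_N$ is a bounded nondecreasing sequence of reals, it converges, ``forcing'' $\mu(U)$ to exist as an actual real, with ``no hidden use of $\ACA$.'' This is false over $\WWKL$: the convergence of an arbitrary bounded increasing sequence of rationals is equivalent to $\ACA$ over $\RCA$, and any such sequence can be realized as the partial sums of a prefix-free presentation. Indeed, Proposition \ref{joinACA} of this paper shows that, over $\WWKL$, the existence of $\mu(U)$ as a number for every open $U$ (via measurability) is equivalent to $\ACA$; so on your reading the corollary would simply not be a theorem of $\WWKL$. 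The statement $\mu(U)=\mu_I(U)$ must instead be read in the virtual or comparative sense described after Definition \ref{def:2}: for instance, for every rational $q$, one has $q<\mu(U)$ (meaning there is $f\in C(X)$ with $0\le f\le 1$, $f$ vanishing off $U$, and $\mu(f)>q$) if and only if $q<\mu_I(U)$ (meaning $q<\sum_{i<N}2^{-|p_i|}$ for some $N$), and dually for upper bounds. With that reading, your computation $\mu(U_N)=\sum_{i<N}2^{-|p_i|}$ together with the comparative form of countable additivity yields the corollary directly, and no claim that any limit or supremum exists as a real is needed or permissible.
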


One final intensional notion of a 
measurable set is 
needed for the development of measure theory.  

\begin{definition}
A \emph{rapidly null} $G_\delta$ set is a $G_\delta$ set $\bigcap_n U_n$ such 
that for each $n$, $\mu_I(U_n) < 2^{-n}$.
\end{definition}

Note: a Martin-L\"of test is just a computably presented rapidly null $G_\delta$ set.

\begin{theorem}[\cite{AvigadDeanRute2012}]\label{thm:adr}
Over $\RCA$, $\WWKL$ is equivalent to the statement that 
if $A$ is a rapidly null $G_\delta$ subset of $2^\omega$,
then $A \neq 2^\omega$.
\end{theorem}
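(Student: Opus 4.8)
The plan is to prove the equivalence over $\RCA$ in two directions, writing $(\dagger)$ for the statement ``every rapidly null $G_\delta$ subset of $2^\omega$ is a proper subset of $2^\omega$.'' The forward direction, $\WWKL \to (\dagger)$, I would deduce quickly from the extensional/intensional comparison established above. Let $A = \bigcap_n U_n$ be rapidly null, so in particular $\mu_I(U_1) < 2^{-1} < 1$. Since $A \subseteq U_1$, it suffices to produce an $X \notin U_1$. Suppose instead $U_1 = 2^\omega$. Then $2^\omega \setminus U_1 = \emptyset$, so the constant function $1$ satisfies the side condition in Definition~\ref{def:2} and witnesses $\mu(U_1) = 1$ in the extensional sense. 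But under $\WWKL$ the corollary above that $\mu(U) = \mu_I(U)$ gives $\mu(U_1) = \mu_I(U_1) < 1$, a contradiction. Hence $U_1 \neq 2^\omega$, so some $X$ avoids $U_1$ and therefore avoids $A$. The point of routing through the extensional measure is that it lets me sidestep forming the complement of $U_1$ as a tree: that complement is only a $\Pi^0_1$ class, not a set available in $\RCA$, so a direct appeal to the tree form of $\WWKL$ would not be licensed.

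For the reverse direction I would argue contrapositively: assuming $\neg\WWKL$, I construct a rapidly null $G_\delta$ equal to $2^\omega$, contradicting $(\dagger)$. Fix a tree $T \subseteq 2^{<\omega}$ witnessing the failure of $\WWKL$, so $T$ is a set, has no infinite path, and has positive measure. Since $T$ is a tree, the densities $|T \cap 2^n|/2^n$ are non-increasing in $n$, so their limit exists and equals some $\delta > 0$; fix a rational $q$ with $0 < q < \delta$, so that $|T \cap 2^n|/2^n \geq q$ for all $n$. Let $V = 2^\omega \setminus [T]$, coded by its prefix-free set of boundary strings $\{b_i\}$ (the minimal $\sigma \notin T$), which is $\Delta^0_1(T)$ and hence exists. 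Because $T$ has no path, $V = 2^\omega$, while the partial sums of $\mu_I(V)$ equal $1 - |T \cap 2^n|/2^n \leq 1 - q$, so $\mu_I(V) \leq 1 - q < 1$.

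The crux is to amplify the single measure-($<1$) cover $V$ into a rapidly null family by self-composition. For each $k$, let $V^{(k)}$ be the open set coded by all concatenations $b_{i_1} b_{i_2} \cdots b_{i_k}$; this code is computable from $\{b_i\}$, so the sequence $\langle V^{(k)} \rangle_k$ exists, each code is again prefix-free, and $\mu_I(V^{(k)}) \leq (1-q)^k$. I claim $V^{(k)} = 2^\omega$ for every $k$: fixing a real $X$, the assertion ``$X \in V^{(k)}$'' is $\Sigma^0_1(X, \{b_i\})$, and applying $V = 2^\omega$ to the successive tails of $X$ yields the implication $X \in V^{(k)} \to X \in V^{(k+1)}$, so $\Sigma^0_1$ induction (available in $\RCA$) gives $X \in V^{(k)}$ for all $k$. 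Now choose $k_n$ with $(1-q)^{k_n} < 2^{-n}$ and put $W_n = V^{(k_n)}$. Then $\bigcap_n W_n$ is rapidly null, since $\mu_I(W_n) \leq (1-q)^{k_n} < 2^{-n}$, yet $\bigcap_n W_n = 2^\omega$, which is the desired contradiction with $(\dagger)$.

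I expect the main obstacle to be keeping the entire argument inside $\RCA$ rather than the ideas themselves. Two places need care. In the forward direction, the complement of an open set is a $\Pi^0_1$ class and not generally a set, which is exactly why I work with the extensional measure of Definition~\ref{def:2} instead of the tree form of $\WWKL$. In the amplification, the statement ``$V^{(k)} = 2^\omega$ for all $k$'' has too high a quantifier complexity to induct on directly, so I prove it one real at a time, where for fixed $X$ it reduces to a $\Sigma^0_1$ induction on $k$. The self-composition $V^{(k)}$, together with the observation that it remains equal to $2^\omega$ while its intensional measure decays geometrically, is the key construction.
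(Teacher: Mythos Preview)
The paper does not prove this theorem; it is cited from \cite{AvigadDeanRute2012} without proof, so there is nothing in the paper to compare against. Your argument is essentially correct, and the amplification in the reverse direction---iterating the cover $V$ to $V^{(k)}$ and, for a fixed real $X$, running $\Sigma^0_1$ induction on $k$---is exactly the right idea and is the standard proof. Two minor points deserve comment.

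First, in the forward direction your claim that ``a direct appeal to the tree form of $\WWKL$ would not be licensed'' is mistaken. Given $U_1 = \bigcup_i [p_i]$ as a prefix-free enumeration, the tree $T' = \{\sigma : \forall i < |\sigma|\; (p_i \not\preceq \sigma)\}$ is $\Delta^0_0$ in the enumeration and hence exists as a set in $\RCA$; one checks $[T'] = 2^\omega \setminus U_1$ and $|T' \cap 2^n|/2^n \geq 1 - \mu_I(U_1) > 1/2$, so $\WWKL$ applied directly to $T'$ produces a path avoiding $U_1$. Your detour through the extensional measure and the corollary $\mu = \mu_I$ is valid, just not needed.

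Second, in the reverse direction the sentence ``their limit exists and equals some $\delta > 0$'' overstates what is available: in $\RCA$ a bounded monotone sequence of rationals need not have a limit as a real number. What $\neg\WWKL$ actually hands you is a tree $T$ with no path together with a rational $q > 0$ such that $|T \cap 2^n|/2^n \geq q$ for all $n$, and this is precisely what the rest of your argument uses. With that rewording the proof is clean.
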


Thus in $\WWKL$, a $\mu$-measurable set may be non-vacuously 
defined as follows.  Let $\mu:C(X) \rightarrow \mathbb R$ be 
a positive Borel probability measure.  Let $L^1(X,\mu)$ denote 
the completion of $C(X)$ with respect to the $L^1$ norm defined by
$||f-g||_1 = \int |f-g|$.
Recall that a sequence $\langle x_n\rangle$ of points of a metric space 
is called \emph{rapidly Cauchy} if for all $n$, we have 
$d(x_n,x_{n+1}) < 2^{-n}$.
Each element of $L^1(X,\mu)$ is represented by 
many \emph{names}, where a name is a sequence 
$\langle f_n\rangle_{n\in\omega}$ of functions from $C(X)$ 
that is rapidly Cauchy for the $L^1$ norm.

\begin{definition}[\cite{BrownGiustoSimpson2002}]\label{defn:measurable_set}
A \emph{measurable characteristic function} is a function $f \in L^1(X,\mu)$
such that $f(x) \in \{0,1\}$ for all $x$ outside a rapidly 
null $G_\delta$ set.  A set $E$ is \emph{measurable} if there is 
some $f \in L^1(X,\mu)$ such that $f = \chi_E$ outside 
a rapidly null $G_\delta$ set.
\end{definition}
Here $\chi_E$ denotes the characteristic function of $E$.  
The measure of $E$ is then defined as $\mu(E) = \mu(f)$,
where $f = \chi_E$ almost everywhere as above.  This is well-defined and 
locally well-behaved by the 
following results of X. Yu \cite{YuX1994}.
\begin{theorem}[$\WWKL$]\label{Thm2.8}
For $f,f' \in L^1(X,\mu)$, $||f-f'||_1 = 0$ if and only if $f = f'$
outside of a rapidly null $G_\delta$ set.  If $f \leq f'$ outside of
a rapidly null $G_\delta$ set, then $\mu(f) \leq \mu(f')$.
\end{theorem}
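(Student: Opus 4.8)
The plan is to isolate the one genuinely constructive ingredient --- a Chebyshev estimate used together with $\WWKL$ --- and to reduce everything else to the $\WWKL$-development of $L^1$ due to Yu. Throughout I extend $\mu$ from $C(X)$ to $L^1(X,\mu)$ by continuity: if $h$ has name $\langle h_n\rangle$ then $\mu(h):=\lim_n\mu(h_n)$ exists because $|\mu(h_n)-\mu(h_m)|\le\|h_n-h_m\|_1$, the resulting functional is linear, and $|\mu(h)|\le\mu(|h|)=\|h\|_1$ (the equality since $\langle|h_n|\rangle$ names $|h|$ and $\|h_n\|_1=\mu(|h_n|)$ for $h_n\in C(X)$). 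I will also use freely that a subsequence of a name is a name for the same element, that $x\mapsto|x|$ and $x\mapsto x^-$ act on $C(X)$ preserving a modulus of uniform continuity and are $1$-Lipschitz for $\|\cdot\|_1$ (so they commute with passing to $L^1$-limits), and that the union of finitely many rapidly null $G_\delta$ sets is, after reindexing, again rapidly null $G_\delta$.

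The constructive core is this: for any name $\langle h_n\rangle$, which after reindexing we may assume satisfies $\|h_n-h_{n+1}\|_1<2^{-n}$, there is a rapidly null $G_\delta$ set outside of which $\lim_n h_n(x)$ exists; and if in addition $\|h_n\|_1\to 0$ and each $h_n\ge 0$, then this set may be chosen so that $\lim_n h_n(x)=0$ on its complement. I would prove this by Chebyshev. Passing to a subsequence (explicitly, using $\|h_n-h_m\|_1\le 2^{-n+1}$ for $m\ge n$) we may assume $\|h_n-h_{n+1}\|_1<2^{-3n}$; set $\psi_n=|h_n-h_{n+1}|\in C(X)$ and consider the open set $U_n=\{x:\psi_n(x)>2^{-n}\}$. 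For every continuous $\phi$ with $0\le\phi\le 1$ vanishing off $U_n$ we have $2^{-n}\phi\le\psi_n$ pointwise, hence $2^{-n}\mu(\phi)\le\mu(\psi_n)$ by positivity of $\mu$ on $C(X)$; taking the supremum over such $\phi$ and invoking $\WWKL$ to identify $\mu$ and $\mu_I$ on open sets gives $\mu_I(U_n)\le 2^n\mu(\psi_n)<2^{-2n}$. Then $\bigcap_m\bigcup_{n>m}U_n$ is, after reindexing, rapidly null $G_\delta$, and off it $\psi_n(x)\le 2^{-n}$ for all large $n$, so $\sum_n|h_n(x)-h_{n+1}(x)|<\infty$ and $\lim_n h_n(x)$ exists. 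Running the same argument with $U_n=\{x:h_n(x)>2^{-n}\}$ --- using that $\|h_n\|_1<2^{-n+1}$ when $\|h_n\|_1\to 0$, so that after a further subsequence $\|h_n\|_1<2^{-4n}$ --- produces a rapidly null $G_\delta$ set off which $\lim_n h_n(x)=0$.

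Granting the core, the two assertions follow by bookkeeping. Forward direction of the biconditional: if $\|f-f'\|_1=0$ then $\langle|f_n-f'_n|\rangle$ names $|f-f'|$ with $\|\,|f_n-f'_n|\,\|_1\to 0$, so the core yields a rapidly null $G_\delta$ set $A_0$ off which $|f_n(x)-f'_n(x)|\to 0$; adjoining to $A_0$ the rapidly null $G_\delta$ sets off which the names of $f$ and $f'$ individually converge, we obtain a rapidly null $G_\delta$ set off which $\lim_n f_n(x)$ and $\lim_n f'_n(x)$ both exist and $|\lim_n f_n(x)-\lim_n f'_n(x)|=\lim_n|f_n(x)-f'_n(x)|=0$, which is the meaning of ``$f=f'$ outside a rapidly null $G_\delta$ set.'' Converse direction: if $f=f'$ off a rapidly null $G_\delta$ set, then $\langle|f_n-f'_n|\rangle$ has pointwise limit $0$ off a rapidly null $G_\delta$ set, so $\|f-f'\|_1=\mu(|f-f'|)=0$ by statement $(\star)$ below. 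Monotonicity: with $g=f'-f$, the hypothesis says $\langle f'_n-f_n\rangle$ has pointwise limit $\ge 0$ off a rapidly null $G_\delta$ set, so the name $\langle(f'_n-f_n)^-\rangle$ of $g^-$ has pointwise limit $0$ there, whence $\|g^-\|_1=0$ by $(\star)$; since $\|(f'_n-f_n)^-\|_1\to\|g^-\|_1=0$ and $\mu((f'_n-f_n)^+)\ge 0$, we get $\mu(f')-\mu(f)=\lim_n\mu(f'_n-f_n)\ge-\lim_n\mu((f'_n-f_n)^-)=-\lim_n\|(f'_n-f_n)^-\|_1=0$.

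It remains to establish $(\star)$: if $g\in L^1(X,\mu)$ has a name whose pointwise limit is $0$ outside some rapidly null $G_\delta$ set, then $\|g\|_1=0$. Here I would appeal to the $\WWKL$ measure theory of Yu and of Brown--Giusto--Simpson: a rapidly null $G_\delta$ set is $\mu$-null, every element of $L^1$ has a well-defined pointwise value outside a rapidly null $G_\delta$ set, and $\mu(g)$ equals the integral of that pointwise value --- so since here the pointwise value of $|g|$ is supported on a $\mu$-null set, absolute continuity of the integral forces $\|g\|_1=\mu(|g|)=0$. I expect this to be the main obstacle to a self-contained proof: making precise ``the pointwise value of an element of $L^1$'' in $\WWKL$, proving the accompanying Riesz--Fischer and Egorov-type statements, and deriving absolute continuity of the integral is exactly the delicate work carried out in \cite{YuX1994} (see also \cite{BrownGiustoSimpson2002}), which is why in the paper this theorem is cited from there rather than reproved.
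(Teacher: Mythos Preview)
The paper does not give its own proof of this theorem; it simply attributes the result to X.~Yu \cite{YuX1994} and moves on. So there is no ``paper's proof'' to compare against in the usual sense. What the paper \emph{does} do, immediately afterward, is prove the more explicit Proposition~\ref{Prop2.10} and its corollaries, and your ``constructive core'' is exactly that Chebyshev/Markov argument, phrased for a single function $h_n$ rather than for the telescoping differences. Your forward implication ($\|f-f'\|_1=0 \Rightarrow f=f'$ off a rapidly null $G_\delta$) is therefore correct and in the same spirit as the explicit material the paper later develops.

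Where you go beyond the paper is in attempting the converse and the monotonicity clause, and you are candid that both reduce to the statement $(\star)$ and that $(\star)$ is precisely the content borrowed from Yu's development (uniform-convergence/Egorov and absolute continuity in $\WWKL$). That diagnosis is accurate: your proof sketch is not self-contained for exactly the reason the paper's citation is not self-contained, and you have correctly located the dependence. One small remark: in your monotonicity paragraph you pass to $g^-$ and invoke $(\star)$; this is fine, but note that once $(\star)$ is available the biconditional's converse already gives $\|g^-\|_1=0$ directly, so the separate appeal there is a special case rather than an independent use.

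In short: your write-up is more than the paper provides, the parts you actually argue are correct, and your identification of $(\star)$ as the unproved residue matches the paper's decision to cite rather than prove.
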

For the rest of this paragraph, $\WWKL$ is assumed.
Observe now that if $U\subseteq 2^\omega$ is open and if $U$ is measurable
in the above sense (that is, $\chi_U \in L^1(X,\mu)$), 
then we have $\mu(U) = \mu_I(U) = \mu(\chi_U)$.
The last equality follows because if $U = \bigcup_{i<\omega} [p_i]$,
the functions $\chi_{\cup_{i<n} [p_i]}$ are continuous and 
converge 
to $\chi_U$ in the $L^1$ norm.  Finally, if $A$ is a rapidly 
null $G_\delta$ set, then $\mu(\chi_A) = \mu_I(A) = 0$ 
because $\chi_A = 0$ outside of $A$ itself.
Therefore, when measurable characteristic 
functions for open or rapidly null $G_\delta$ sets exist, all our
ways of defining measures for these sets coincide.  
The existence of a measurable characteristic function for an open set also 
guarantees that the measure of that open set exists in the 
model (and thus can be discussed directly, not just comparatively).

Finally, we will need to make use of some more explicit versions 
of known results from the literature.  For example, we want to 
use Theorem \ref{Thm2.8}, but as stated it does not give any  
bounds on the complexity of the rapidly null $G_\delta$ set. 
However, those bounds do exist and 
we need the uniformity that comes with them.  So below we 
reprove several results in order to 
clarify the complexity of the null set of points that are being
discarded.  From here forward, we also restrict our attention to 
Cantor space with the fair coin measure, which is denoted by $\lambda$.

First, recall that if $A_n$ is a sequence of rapidly null $G_\delta$ 
sets $A_n = \bigcap_{i} A_{n,i}$, the same trick used for producing 
a universal Martin-L\"of test can also produce a rapidly null $G_\delta$ 
set $A \supseteq \bigcup_n A_n$.  Just let $U_j = \bigcup_n A_{n,n+j+1}$, 
and let $A = \bigcap_j U_j$.

Much but not all of the rest of this section has been presented  
in \cite{BrownGiustoSimpson2002}.
\begin{proposition}[$\WWKL$]\label{Prop2.10}
Suppose that $\langle f_i\rangle$ is a sequence of ideal 
continuous functions of $C(X)$ which is rapidly Cauchy 
for the $L^1$ norm.  Let 
$$A_n = \{ x : \exists N \sum_{i=2n+1}^N |f_i(x)-f_{i+1}(x)| > 2^{-n}\}$$
Then $\mu(A_n) \leq 2^{-n}$.
\end{proposition}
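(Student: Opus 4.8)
The plan is to write $A_n$ as an increasing union of basic open sets cut out by the partial sums, bound the measure of each piece by a Markov-type inequality that uses only positivity of $\mu$ and the extensional Definition~\ref{def:2}, and then pass to the limit using the countable-additivity form of $\WWKL$.

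First I would set $g_N = \sum_{i=2n+1}^{N} |f_i - f_{i+1}|$. Since each $f_i$ is an ideal continuous function and finite sums and absolute values preserve a modulus of uniform continuity, $g_N$ is a point of $C(X)$, and $0 \le g_N \le g_{N+1}$. The open set $V_N := \{x : g_N(x) > 2^{-n}\}$ admits a code uniformly in $N$, and $A_n = \bigcup_N V_N$ with $V_N \subseteq V_{N+1}$. Using linearity and positivity of $\mu$ on $C(X)$ together with the rapidly Cauchy hypothesis $||f_i - f_{i+1}||_1 = \mu(|f_i - f_{i+1}|) < 2^{-i}$, one gets $\mu(g_N) = \sum_{i=2n+1}^N ||f_i - f_{i+1}||_1 < \sum_{i\ge 2n+1} 2^{-i} = 2^{-2n}$; this is precisely why the definition of $A_n$ indexes the sum from $2n+1$.

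The Markov step is where I expect the only real care to be needed. Fix $N$ and any $f \in C(X)$ with $0 \le f \le 1$ and $f \equiv 0$ on $X \setminus V_N$. Then $2^{-n} f \le g_N$ pointwise: on $V_N$ because $g_N > 2^{-n} \ge 2^{-n} f$, and off $V_N$ because there $f = 0 \le g_N$. Hence $g_N - 2^{-n} f$ is a nonnegative element of $C(X)$, so positivity of $\mu$ gives $2^{-n}\mu(f) \le \mu(g_N) < 2^{-2n}$, i.e.\ $\mu(f) < 2^{-n}$. Taking the supremum over all such $f$ in Definition~\ref{def:2} yields $\mu(V_N) \le 2^{-n}$ for every $N$. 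Finally, since the $V_N$ are increasing open sets with $\bigcup_N V_N = A_n$, countable additivity of $\mu$ (available under $\WWKL$) gives $\mu(A_n) = \lim_N \mu(V_N) \le 2^{-n}$.

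The obstacle is bookkeeping rather than depth: one must check that $g_N$ genuinely lies in $C(X)$ so that $\mu(g_N)$ is meaningful inside the model, carry out the Markov inequality purely through the supremum definition of $\mu(V_N)$ and positivity of the functional (avoiding any appeal to a pointwise integral, which need not exist in weak systems), and invoke only the countable-additivity statement of $\WWKL$ quoted above rather than any stronger measure-theoretic fact.
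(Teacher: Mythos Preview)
Your proof is correct and follows essentially the same Markov-inequality route as the paper. The only cosmetic difference is the filtration: the paper, working in Cantor space, takes finite unions $B=\bigcup_{j<k}[p_j]$ of basic cylinders contained in $A_n$ (so that $\chi_B\in C(X)$), picks a single $N$ witnessing $B\subseteq V_N$, and applies $2^{-n}\chi_B\le g_N$ directly; you instead filter by the sets $V_N$ themselves and run Markov through arbitrary test functions in Definition~\ref{def:2}. Both arguments finish by invoking the countable-additivity form of $\WWKL$, and your version has the mild advantage of not relying on clopen characteristic functions being continuous.
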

\begin{proof}
Formally, $A_n$ is a union of basic open sets $\bigcup_j [p_j]$ 
satisfying the condition.  We can assume the $[p_j]$ 
are disjoint.  By countable additivity, 
it suffices to show that $\mu(B) < 2^{-n}$
for all sets $B = \cup_{j<k} [p_j]$.  Let $N$ be large 
enough to witness that $[p_j] \subseteq A_n$ for all $j<k$.
We have 
$$2^{-n}\mu(B) = \int 2^{-n}\chi_B \leq \int \sum_{i=2n+1}^N |f_i-f_{i+1}|
= \sum_{i=2n+1}^N \int |f_i-f_{i+1}| < 2^{-2n}$$
Thus $\mu(B) < 2^{-n}$, as needed.
\end{proof}

The corollaries use $\ACA$ only to guarantee that a Cauchy sequence 
converges.
\begin{corollary}[$\ACA$]\label{Cor2.11}
A name $\langle f_i\rangle$ for an element of $L^1(2^\omega)$
converges pointwise a.e.  Furthermore, this pointwise convergence is 
achieved outside of the rapidly null $G_\delta$ set
$$\bigcap_k \bigcup_{n>k} A_n$$
where $A_n$ are defined as above.
\end{corollary}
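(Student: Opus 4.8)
The plan is to combine Proposition \ref{Prop2.10} with the triangle inequality: membership outside all the tails $A_n$ (for $n$ large) of the displayed set forces the real sequence $\langle f_i(x)\rangle_i$ to be Cauchy with an explicit modulus, hence convergent, while the measure bounds from Proposition \ref{Prop2.10} show that the exceptional set really is a rapidly null $G_\delta$ set. No genuinely hard step should be involved; the work is in keeping the coded objects in their official forms.

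First I would unwind the combinatorics of the sets. Each $A_n$ is a coded open subset of $2^\omega$, since for each fixed $N$ the map $x\mapsto\sum_{i=2n+1}^{N}|f_i(x)-f_{i+1}(x)|$ belongs to $C(2^\omega)$, so $\{x:\sum_{i=2n+1}^{N}|f_i(x)-f_{i+1}(x)|>2^{-n}\}$ is open and $A_n$ is their union over $N$; over $\RCA$ we may re-present $A_n$ by a prefix-free enumeration. By Proposition \ref{Prop2.10} we have $\mu(A_n)\le 2^{-n}$, equivalently $\mu_I(A_n)\le 2^{-n}$ since $\ACA$ implies $\WWKL$. Setting $U_k=\bigcup_{n>k+1}A_n$, countable subadditivity gives $\mu_I(U_k)\le\sum_{n>k+1}2^{-n}=2^{-(k+1)}<2^{-k}$, so $G:=\bigcap_k U_k$ is a rapidly null $G_\delta$ set; and since the sets $\bigcup_{n>m}A_n$ are decreasing in $m$, $G=\bigcap_k\bigcup_{n>k}A_n$, the set named in the statement.

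Next, fixing $x\notin G$, I would extract the modulus of convergence. By the choice of $G$ there is some $k$ with $x\notin A_n$ for all $n>k$; unwinding the definition of $A_n$, this means that for all $n>k$ and all $N$ we have $\sum_{i=2n+1}^{N}|f_i(x)-f_{i+1}(x)|\le 2^{-n}$. Hence for all $n>k$ and all $m\ge 2n+1$ the triangle inequality gives $|f_{2n+1}(x)-f_m(x)|\le\sum_{i=2n+1}^{m-1}|f_i(x)-f_{i+1}(x)|\le 2^{-n}$, so $\langle f_i(x)\rangle_i$ is a Cauchy sequence of reals with the explicit modulus $j\mapsto 2(k+j)+1$. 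By $\ACA$ this Cauchy sequence converges. Since this holds for every $x\notin G$ and $\mu(G)=0$, the name $\langle f_i\rangle$ converges pointwise almost everywhere, with the convergence achieved precisely off $G$, as claimed.

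I do not expect a real obstacle here; the only points needing care are bookkeeping ones: keeping $A_n$, $U_k$, and $G$ in the official coded-open and coded-$G_\delta$ formats with prefix-free enumerations, and using the reindexing $U_k=\bigcup_{n>k+1}A_n$ so that the measure estimate is strict and ``rapidly null'' holds on the nose rather than merely up to a reindexing.
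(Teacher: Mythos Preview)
Your proof is correct and follows exactly the approach the paper intends: the paper's entire justification is the remark that ``the corollaries use $\ACA$ only to guarantee that a Cauchy sequence converges,'' and you have simply spelled out the details---the measure bound on $\bigcup_{n>k}A_n$ from Proposition~\ref{Prop2.10}, the reindexing to get the strict inequality required for ``rapidly null,'' and the explicit Cauchy modulus for $x$ outside the exceptional set.
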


\begin{corollary}[$\WWKL$]\label{cor:extra}
A name $\langle f_i\rangle$ for an element of $L^1(2^\omega)$ 
converges uniformly on each closed set
$$B_k = 2^\omega \setminus \bigcup_{n\geq k} A_n,$$
where $A_n$ are defined as above.
Furthermore, the modulus of uniform convergence of 
$f_i$ on $B_k$ is primitive recursive: if $m>2\max\{\ell,k\}$,
then $|f_m(x) - f(x)| \leq 2^{-\ell}$.
\end{corollary}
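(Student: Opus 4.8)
The plan is simply to unwind the definition of $A_n$ on its complement and keep track of the arithmetic. Recall that $x \notin A_n$ means exactly that $\sum_{i=2n+1}^{N}|f_i(x)-f_{i+1}(x)| \leq 2^{-n}$ for \emph{every} $N$, equivalently that the tail sum $\sum_{i=2n+1}^{\infty}|f_i(x)-f_{i+1}(x)|$ is at most $2^{-n}$. For $x \in B_k$ this holds simultaneously for all $n \geq k$, and this is the only feature of $B_k$ we use. I would first note that $B_k$ is closed, since each $A_n$ is by definition a union of basic clopen sets, hence open, so $\bigcup_{n\geq k}A_n$ is open; (by Proposition \ref{Prop2.10} and countable additivity one also gets $\mu(B_k)\geq 1-2^{1-k}$, though the statement as phrased concerns only the uniform convergence).

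For the modulus, fix $\ell$ and put $n = \max\{\ell,k\}$. If $m > 2\max\{\ell,k\} = 2n$ then $m \geq 2n+1$, since $m$ is an integer. For any $x \in B_k$ and any $M > m$, the triangle inequality gives
$$|f_m(x)-f_M(x)| \leq \sum_{i=m}^{M-1}|f_i(x)-f_{i+1}(x)| \leq \sum_{i=2n+1}^{\infty}|f_i(x)-f_{i+1}(x)| \leq 2^{-n} \leq 2^{-\ell},$$
where the middle step uses $m \geq 2n+1$ and the next uses $x \notin A_n$ (legitimate because $n \geq k$). Hence $\langle f_i(x)\rangle$ is Cauchy for each $x \in B_k$; since Cauchy sequences of reals converge in $\RCA$, it has a limit $f(x)$, and letting $M \to \infty$ above yields $|f_m(x)-f(x)| \leq 2^{-\ell}$ for all $x \in B_k$ whenever $m > 2\max\{\ell,k\}$. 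That is precisely the asserted primitive recursive modulus of uniform convergence on $B_k$; and since the $f_i$ are ideal continuous functions and the convergence is uniform, $f$ is continuous on $B_k$ (agreeing there with the a.e.\ limit of Corollary \ref{Cor2.11} when $\ACA$ holds).

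There is no real obstacle here — the argument is bookkeeping with a geometric tail. The two points to be a little careful about are: (i) because we are only assuming $\WWKL$, the $L^1$-limit of $\langle f_i\rangle$ need not exist as an element of $L^1(2^\omega)$, so ``$f$'' in the statement must be read as the genuinely-existing pointwise limit on $B_k$ rather than a global object; and (ii) one must verify that the index threshold $m > 2\max\{\ell,k\}$ really forces $m \geq 2n+1$, so that every term in the relevant tail sum is controlled by the condition defining $B_k$.
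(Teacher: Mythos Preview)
Your argument is correct and is essentially the same as the paper's: set $n=\max\{\ell,k\}$, use $x\notin A_n$ to bound the tail $\sum_{i\geq 2n+1}|f_i(x)-f_{i+1}(x)|\leq 2^{-n}$, and read off the uniform modulus. The paper phrases the convergence via absolute convergence of the telescoping series rather than via the Cauchy criterion, but the content is identical; your additional remarks (closedness of $B_k$, continuity of the limit on $B_k$, the meaning of $f$ under $\WWKL$) are fine elaborations and not part of the paper's terse proof.
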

\begin{proof} Let $n = \max\{\ell,k\}$ and $x\in B_k$.
Then $B_k \cap A_n = \emptyset$, 
and thus the series 
$f_m(x) + \sum_{i=m}^\infty (f_{i+1}(x)-f_i(x))$ converges 
absolutely, with
$$\sum_{i=m}^\infty |f_{i+1}(x)-f_i(x)|  \leq 
\sum_{i=2n+1}^\infty |f_{i+1}(x)-f_i(x)| \leq 2^{-n} \leq 2^{-\ell}.$$
\end{proof}

\begin{corollary}[$\ACA$]\label{Cor2.12}
If $\langle f_i\rangle$ and $\langle g_i\rangle$ are two names for the same 
element of $L^1(2^\omega)$, then 
$$\lim_i f_i(x)  = \lim_i g_i(x)$$
for almost all $x$.  Furthermore, this pointwise convergence is 
achieved outside of a rapidly null $G_\delta$ set given by an 
explicit formula.
\end{corollary}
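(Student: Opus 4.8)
The plan is to reduce the statement to pointwise facts we already control, namely Corollary \ref{Cor2.11} together with a Markov-inequality estimate in the style of Proposition \ref{Prop2.10}. First I would use $\ACA$ to note that both names converge in $L^1(2^\omega)$, necessarily to a common element $f$, and record the quantitative consequence of rapid Cauchyness: from $d(f_i,f_{i+1})<2^{-i}$ we get $\|f_i-f\|_1\le 2^{-i+1}$, and likewise $\|g_i-f\|_1\le 2^{-i+1}$, so $\|f_i-g_i\|_1\le 2^{-i+2}$. Then I would apply Corollary \ref{Cor2.11} twice: $\langle f_i\rangle$ converges pointwise to some $L_f(x)$ outside the rapidly null $G_\delta$ set $\mathcal A^f:=\bigcap_k\bigcup_{n>k}A_n^f$ built from $\langle f_i\rangle$, and $\langle g_i\rangle$ converges pointwise to some $L_g(x)$ outside the analogous set $\mathcal A^g$ built from $\langle g_i\rangle$.

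What remains is to force $L_f=L_g$ off a rapidly null $G_\delta$ set, and this I would obtain exactly as in the proof of Proposition \ref{Prop2.10}. Set $C_n=\{x:|f_{2n}(x)-g_{2n}(x)|>2^{-n}\}$, which is open since $f_{2n}-g_{2n}\in C(2^\omega)$, and write it as a prefix-free union of cylinders. For any finite subunion $B\subseteq C_n$ we have $2^{-n}\chi_B\le |f_{2n}-g_{2n}|$ pointwise, hence $2^{-n}\mu(B)=\int 2^{-n}\chi_B\le \int|f_{2n}-g_{2n}|=\|f_{2n}-g_{2n}\|_1\le 2^{-2n+2}$, so $\mu(B)\le 2^{-n+2}$; by countable additivity (available under $\ACA$) this gives $\mu(C_n)\le 2^{-n+2}$. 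Then $\mathcal C:=\bigcap_k\bigcup_{n\ge k}C_n$ is a rapidly null $G_\delta$ set, witnessed by the reindexed open sets $U_j:=\bigcup_{n\ge j+3}C_n$, for which $\mu_I(U_j)\le\sum_{n\ge j+3}2^{-n+2}=2^{-j}$. For $x\notin\mathcal C$ we have $|f_{2n}(x)-g_{2n}(x)|\le 2^{-n}$ for all sufficiently large $n$, so $f_{2n}(x)-g_{2n}(x)\to 0$; combining this with pointwise convergence along the even subsequence for $x\notin\mathcal A^f\cup\mathcal A^g$ yields $L_f(x)=\lim_n f_{2n}(x)=\lim_n g_{2n}(x)=L_g(x)$, i.e.\ $\lim_i f_i(x)=\lim_i g_i(x)$.

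To finish, I would merge $\mathcal A^f$, $\mathcal A^g$, and $\mathcal C$ into a single rapidly null $G_\delta$ set using the ``union of rapidly null $G_\delta$ sets'' construction recalled just before Proposition \ref{Prop2.10}. Since each of the three ingredients is defined by an explicit arithmetic formula in $\langle f_i\rangle$, $\langle g_i\rangle$, and the measure $\lambda$, so is the assembled set, which is the ``explicit formula'' asserted in the statement.

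I expect the only points needing care to be bookkeeping ones: absorbing the additive constants in $\mu(C_n)\le 2^{-n+2}$ into a reindexing so that the final $G_\delta$ set is genuinely rapidly null, and being explicit about the convention that two names for the same $L^1$ element are $L^1$-close at the rate $\|f_i-g_i\|_1\le 2^{-i+2}$ (which here is supplied by $\ACA$, since it guarantees an actual limit point $f\in L^1(2^\omega)$ to triangulate through). The substantive analysis is entirely inherited from Proposition \ref{Prop2.10} and Corollary \ref{Cor2.11}.
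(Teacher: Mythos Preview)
Your argument is correct. The paper's proof is a minor repackaging of the same idea: rather than introducing the sets $C_n=\{x:|f_{2n}(x)-g_{2n}(x)|>2^{-n}\}$ and redoing a Markov estimate, the paper simply observes that the interleaved sequence $\langle f_2,g_3,f_4,g_5,\dots\rangle$ is again rapidly Cauchy (by triangulating through the common $L^1$ limit, exactly as you do) and applies Proposition~\ref{Prop2.10}/Corollary~\ref{Cor2.11} a third time to obtain a rapidly null $G_\delta$ outside of which the interleaved sequence converges pointwise; its even and odd subsequences then have the same limit, forcing $\lim_i f_i(x)=\lim_i g_i(x)$. The paper then combines the three rapidly null $G_\delta$ sets, just as you do. Your route is equally valid and arguably more transparent about \emph{why} the limits agree, while the paper's route avoids the reindexing bookkeeping by reusing the already-proved corollary verbatim. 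Either way the exceptional set is given by an explicit arithmetic formula in the two names.
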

\begin{proof}
Let $A_n(f), A_n(g),$ and $A_n(f,g)$ be defined as in Proposition 
\ref{Prop2.10} applied to the rapidly Cauchy sequences
$\langle f_i\rangle$, $\langle g_i \rangle$,
and $\langle f_2,g_3,f_4,g_5,\dots \rangle$ respectively.  Then 
the limits of $f_i(x)$ and $g_i(x)$ exist and agree for any $x$ outside of
three rapidly null $G_\delta$ sets.  Combine these rapidly null $G_\delta$ 
sets into a single rapidly null $G_\delta$ set.
\end{proof}

\begin{proposition}[$\ACA$]\label{Prop2.13}
If $\langle h_j\rangle$ is a sequence of functions of $L^1(2^\omega)$ 
rapidly converging to a function $g \in L^1(2^\omega)$, then
$$\lim_{j\rightarrow \infty} h_j(x) = g(x)$$
for almost all $x$.  Furthermore, this pointwise convergence is 
achieved outside of a rapidly null $G_\delta$ set given by 
an explicit formula.
\end{proposition}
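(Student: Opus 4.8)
The plan is to reduce everything to Corollary \ref{Cor2.11} by replacing each $h_j$ with a sufficiently far-out term of a name for it, and then separately controlling, pointwise almost everywhere, the discrepancy between $h_j$ and the chosen term.

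First I would fix, for each $j$, a name $\langle f^j_i\rangle_{i\in\omega}$ of ideal continuous functions for $h_j$, and let $A^j_n$ be the rapidly null open sets attached to this name by Proposition \ref{Prop2.10}. By Corollary \ref{Cor2.11} the pointwise value $h_j(x)=\lim_i f^j_i(x)$ is defined outside $\bigcap_k\bigcup_{n>k}A^j_n$, and by Corollary \ref{cor:extra}, on $B^j_k = 2^\omega\setminus\bigcup_{n\geq k}A^j_n$ one has the primitive recursive modulus bound $|f^j_m(x)-h_j(x)|\leq 2^{-\ell}$ whenever $m>2\max\{\ell,k\}$. Now set $g_j := f^j_{3j}$. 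Since $\langle f^j_i\rangle$ is rapidly Cauchy, $\|g_j-h_j\|_1\leq 2^{-3j+1}$, and since $h_j\to g$ rapidly in $L^1$ this gives $\|g_j-g\|_1\to 0$ and, after a harmless shift of indices, that $\langle g_j\rangle$ is itself a name (of ideal continuous functions) for $g$.

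Next I would assemble the exceptional set. Put $C_j := \bigcup_{n\geq j}A^j_n$, so $\mu_I(C_j)\leq 2^{-j+1}$, and let $N_1 := \bigcap_k\bigcup_{j\geq k}C_j$, which is rapidly null $G_\delta$ after the standard reindexing and is given by an explicit formula in the given sequence of names. If $x\notin N_1$, fix $k$ with $x\notin C_j$ for all $j\geq k$; then $x\in B^j_j$, so applying the modulus bound with $m=3j$ and $\ell=j$ yields $|g_j(x)-h_j(x)|\leq 2^{-j}$ for all $j\geq k$, hence $|g_j(x)-h_j(x)|\to 0$. Let $N_2$ be the explicit rapidly null $G_\delta$ set from Corollary \ref{Cor2.11} applied to the name $\langle g_j\rangle$, together with Corollary \ref{Cor2.12} to identify $\lim_j g_j(x)$ with the a.e.-defined value $g(x)$ coming from whatever name of $g$ we started with; so $g_j(x)\to g(x)$ off $N_2$. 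Combining $N_1$ and $N_2$ into a single rapidly null $G_\delta$ set $N$ by the union construction recalled just before Proposition \ref{Prop2.10}, for every $x\notin N$ we get $h_j(x) = g_j(x) + (h_j(x)-g_j(x)) \to g(x) + 0 = g(x)$, and $N$ is explicit in the input data. All of this lives in $\ACA$, which is used only to know that the relevant Cauchy sequences converge.

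The only genuinely delicate point is the comparison in the third paragraph: the value $h_j(x)$ is accessible only through a name for $h_j$, so to bound $|h_j(x)-g_j(x)|$ one must route through the explicit uniform modulus of Corollary \ref{cor:extra} rather than the $L^1$ estimate $\|g_j-h_j\|_1\leq 2^{-3j+1}$ alone, and the cutoff $n\geq j$ defining $C_j$ has to be chosen so that $\sum_j\mu_I(C_j)$ converges while the modulus already forces accuracy $2^{-j}$ at index $3j$. The rest — which far-out term $f^j_{i(j)}$ to pick, and the reindexings that upgrade ``rapidly null up to a constant factor'' to ``rapidly null'' — is routine bookkeeping.
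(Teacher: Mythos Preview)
Your proposal is correct and follows essentially the same approach as the paper. Both arguments extract a diagonal sequence of ideal continuous functions from the names of the $h_j$ to obtain a new name for $g$ (the paper takes $h_j^{2j+1}$, you take $f^j_{3j}$), both control $|h_j(x)-(\text{diagonal term})|$ pointwise via the same tail-of-tails construction (your $N_1=\bigcap_k\bigcup_{j\ge k}C_j$ is exactly the paper's $\bigcap_k C_k$ up to index shifts), and both finish by invoking Corollary~\ref{Cor2.12} to identify the diagonal limit with $g(x)$; the only difference is that the paper additionally throws in the individual Corollary~\ref{Cor2.11} null sets so that every $h_j(x)$ is defined, whereas you (harmlessly) only guarantee this for cofinally many $j$.
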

\begin{proof}
Define $\langle f^i\rangle_{i\in \omega}$ by 
$f^i = h_i^{2i+1}$, where $\langle h_j^i\rangle_{i<\omega}$ is the given name 
for $h_j$.  Then $\langle f^{i+2} \rangle_{i \in \omega}$ is rapidly Cauchy and is
another name for $g$, which we can 
see because
$$\int |f^i - f^{i+1}| \leq \int |f^i - h_i| + \int |h_i -h_{i+1}| + \int |h_{i+1}-f^{i+1}| \leq 2^{-2i} + 2^{-i} + 2^{-2i}$$
and
$$\int |f^i-g| \leq \int |f^i - h_i| + \int |h_i - g| \leq 2^{-2i} + 2^{-i+1}.$$

Let $A_n(g)$ and $A_n(h_j)$ be the building blocks of infinitely many 
rapidly null $G_\delta$ sets as in Corollary \ref{Cor2.11},
so that outside of these sets the notations $g(x)$ and $h_j(x)$ 
are well-defined as the pointwise limits of the given names for $g$ and 
each $h_j$.  Additionally,
letting 
$$C_k = \bigcup_{\substack{j>k\\ n>j}}A_n(h_j),$$ 
by Proposition \ref{Prop2.10}, we have $\lambda(\bigcup_{n>j} A_n(h_j)) < 2^{-j}$ 
and thus 
$\lambda(C_k) < 2^{-k}$ 
and 
$\bigcap_k C_k$ is a
rapidly null $G_\delta$ set.  Combine into a single test
\begin{enumerate}
\item\label{e1} the infinitely many rapidly null $G_\delta$ sets which result from applying 
Corollary \ref{Cor2.11} to the given names for $g$ and each $h_j$
\item\label{e2} the rapidly null $G_\delta$ set guaranteed by Corollary \ref{Cor2.12},
so that for $x$ outside of $B$, $\lim_i f^i(x) = g(x)$.
\item\label{e3} $\bigcap_k C_k$.
\end{enumerate}  
By (\ref{e1}), if $x$ avoids this test, then $h_j(x)$ and $g(x)$ are well-defined 
as the pointwise limit of the given names of $g$ and $h_j$.
By (\ref{e2}), if $x$ avoids this test, then $\lim_i f^i(x) = g(x)$.
Finally, we claim that if $x$ avoids this test, then 
$\lim_j h_j(x) = \lim_i f^i(x)$.  The limit on the right hand side exists, 
so it suffices to show that $\lim_j |f^j(x) -h_j(x)| = 0$.
This follows by (\ref{e3}) because 
if $x \notin C_k$ for some $k$, then for all $j>k$ we have 
$$|f^j(x) - h_j(x)| \leq \sum_{i=2j+1}^\infty |h_{j}^i(x) - h_{j}^{i+1}(x)| \leq 2^{-j}.$$
\end{proof}

We have the following relationship between higher randomness and measure
theory.  This is surely known (and one could surely do better than $\Delta^1_1$-random)
but it is enough for our purposes.
\begin{lemma}\label{Lem1}
Suppose that $f \in L^1(2^\omega)$, with name $\langle f^i\rangle_{i<\omega}$. 
Suppose that $R$ is $\Delta^1_1$-random relative to $\langle f^i \rangle_{i<\omega}$.
Then 
$$\lim_{N\rightarrow \infty} \frac{1}{N} \sum_{j<N} f(R^{[j]}) = \int_{2^\omega} f$$
\end{lemma}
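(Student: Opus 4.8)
The plan is to produce a single set $E \subseteq 2^\omega$ of measure one which is arithmetically definable from the name $\langle f^i\rangle_{i<\omega}$ and such that membership in $E$ literally asserts the conclusion; since $R$ is $\Delta^1_1$-random relative to $\langle f^i\rangle$ and every set arithmetic in $\langle f^i\rangle$ is in particular a $\Delta^1_1(\langle f^i\rangle)$ set, $R$ must lie in $E$. Throughout I identify $2^\omega$ with $(2^\omega)^\omega$ via the standard pairing, under which $\lambda$ becomes the product of countably many copies of $\lambda$; thus for a $\lambda$-distributed $X$ the coordinates $X^{[j]}$ are independent and each $\lambda$-distributed. The point to notice is that although $f$ is given only through a name, the statement ``$\lim_N \frac1N\sum_{j<N} f(R^{[j]}) = \int_{2^\omega} f$'', once unwound, is purely arithmetic in $\langle f^i\rangle$ and $R$, so $\Delta^1_1$-randomness is vastly more than is needed --- consistent with the remark preceding the lemma. (I argue classically; in the intended applications $R$ and the name live in $\omega$-models, so this suffices.)

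First I would invoke Corollary~\ref{Cor2.11} to fix a rapidly null $G_\delta$ set $D$, given by an explicit formula in $\langle f^i\rangle$, outside of which the name converges pointwise to $f$; so for $x \notin D$ the value $f(x) = \lim_i f^i(x)$ is well-defined. Let $E_0 = \{X : X^{[j]} \notin D \text{ for all } j\}$; since $\lambda(D) = 0$, Fubini gives $\lambda(E_0) = 1$, and on $E_0$ each $Y_j(X) := f(X^{[j]})$ is defined. Because the coordinate maps are independent and $\lambda$-distributed, $(Y_j)_{j\in\omega}$ is a sequence of independent, identically distributed random variables with common distribution the pushforward $f_*\lambda$; since $f\in L^1$, this distribution has finite absolute first moment $\int_{2^\omega}|f|\,d\lambda$ and mean $\int_{2^\omega} f$. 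By the classical strong law of large numbers, the set
$$E = \left\{ X \in E_0 : \lim_{N\to\infty}\frac1N\sum_{j<N} f(X^{[j]}) = \int_{2^\omega} f\right\}$$
has $\lambda(E) = 1$. Finally, $E$ is arithmetic in $\langle f^i\rangle$: the predicate ``$X^{[j]}\notin D$ and $\lim_i f^i(X^{[j]})$ exists'' is arithmetic in $\langle f^i\rangle$ and $X$, uniformly in $j$, and the condition that the Ces\`aro averages $\frac1N\sum_{j<N}\lim_i f^i(X^{[j]})$ converge to the real $\int_{2^\omega} f = \lim_i \mu(f^i)$ --- which has a rate of approximation computable from $\langle f^i\rangle$, since $|\mu(f^i)-\mu(f^{i+1})|\le\|f^i-f^{i+1}\|_1<2^{-i}$ --- is then also arithmetic in $\langle f^i\rangle$ and $X$. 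Hence $E \in \Delta^1_1(\langle f^i\rangle)$ and $\lambda(E) = 1$, so $R \in E$, which is exactly the claim.

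I do not expect a genuine obstacle; the one step carrying the real content is the strong law of large numbers for $(Y_j)$, and its only subtlety is that the $Y_j$ are defined only almost everywhere (on $E_0$), which is harmless since $E_0$ has full measure and the $Y_j$ are genuinely i.i.d.\ with finite first moment there. If one preferred a more hands-on route, one could first check the conclusion for each ideal continuous function $g$ of $C(2^\omega)$ --- these are exactly the rational-valued functions constant on cylinders of a fixed length, for which $g(X^{[j]})$ is a bounded i.i.d.\ sequence and the effective law of large numbers already yields convergence to $\int g$ at every Martin-L\"of random point, uniformly in $g$ --- and then pass to general $f$ through its name; but that route forces one to bound the $L^1$-error $\frac1N\sum_{j<N}|f(R^{[j]})-f^i(R^{[j]})|$ uniformly in $N$, i.e.\ to re-prove a maximal inequality, precisely the annoyance the ``arithmetic measure-one set'' argument sidesteps.
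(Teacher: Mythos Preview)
Your proof is correct and takes a genuinely different route from the paper's. The paper argues by approximating $f$ with simple functions $f_\varepsilon = \sum_k k\varepsilon\,\chi_{A_k}$ whose level sets $A_k$ are Borel in the name, then uses the randomness of $R$ to guarantee that the columns $R^{[j]}$ visit each $A_k$ with the correct limiting frequency, and finally lets $\varepsilon\to 0$. You instead apply the classical strong law of large numbers once to the i.i.d.\ $L^1$ sequence $Y_j(X)=f(X^{[j]})$ and observe that the resulting measure-one success set $E$ is arithmetic in the name, so any $\Delta^1_1(\langle f^i\rangle)$-random must lie in it. The paper's route is more elementary in that it only needs the law of large numbers for indicator functions, at the price of an extra $\varepsilon$-approximation layer; your route trades that for the full $L^1$ strong law (used as a black box) but packages the appeal to randomness into a single clean step, and makes explicit the point --- only parenthetically noted in the paper --- that arithmetic randomness relative to the name already suffices.
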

\begin{proof}
Note that the randomness of $R$ ensures that $f(R^{[j]})$ is well-defined as 
$\lim_i f^i(R^{[j]})$.
For any $\varepsilon$, we can find a measurable function
$f_\varepsilon = \sum_{k=-\infty}^\infty k\varepsilon \chi_{A_k}$
where $A_k$ are measurable sets which have Borel definitions uniformly 
in the name $\langle f^i\rangle$, and such that 
$|f(x) - f_\varepsilon(x)|<\varepsilon$ for all $x$ outside of a $G_\delta$ 
set which also has a Borel definition relative to $\langle f^i\rangle$.  
Then the randomness of $R$ ensures that the $R^{[j]}$ visit each 
$A_k$ with the right limiting frequency, and that 
$|f(R^{[j]}) - k\varepsilon| < \varepsilon$
whenever $R^{[j]}\in A_k$.  Thus $\frac{1}{N} \sum_{j<N} f(R^{[j]})$ 
is within $\varepsilon$ of $\frac{1}{N} \sum_{j<N} f_\varepsilon(R^{[j]})$,
and the latter tends to to $\int_{2^\omega} f_\varepsilon$ as $N$ increases.  
Letting $\varepsilon$
go to zero completes the proof.
\end{proof}

\section{Regularity approximations and measure approximations}

The following version of measurability for a
set was implicit in \cite{YuX1993}.
\begin{definition}\label{defn:regularity_measurable_set}
A set $B$ is \emph{regularity-measurable} if there are $G_\delta$ 
sets $A$ and $C$ such that $A^c \subseteq B \subseteq C$ 
and $A \cap C$ is rapidly null.
\end{definition}
We bring up this definition because such a pair $(A,C)$,
which we could call a \emph{regularity approximation} to 
$B$, would seem an obvious analog to the 
\emph{Baire approximation} to a set $B$ defined in \cite{ADMSW}.
We can use this notion of measurability to define the principle 
$\CDM$ as follows.
\begin{definition}
Let $\CDM$ be the principle ``Every completely determined 
Borel set is regularity-measurable''.
\end{definition}
A difference between measure and category now arises.
The Baire Category Theorem holds in $\RCA$, so $\RCA$ 
knows that the whole space is not meager.  However, 
$\WWKL$ is needed in order to know that the whole space 
is not null.
\begin{proposition}\label{meetRCA}
Over $\RCA$, $\neg \WWKL$ implies $\CDM$.
\end{proposition}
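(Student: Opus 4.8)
The plan is to observe that $\neg\WWKL$ trivializes measurability by supplying a rapidly null $G_\delta$ set that is all of $2^\omega$. Indeed, by Theorem~\ref{thm:adr}, over $\RCA$ the principle $\WWKL$ is equivalent to the assertion that every rapidly null $G_\delta$ subset of $2^\omega$ is proper; so from $\neg\WWKL$ we obtain a rapidly null $G_\delta$ set $A$ with $A = 2^\omega$, that is, $\forall X\,(X \in A)$.

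With such an $A$ fixed, I would check that an arbitrary $B \subseteq 2^\omega$ --- in particular any completely determined Borel set --- is regularity-measurable via the pair $(A,C)$ with $C := A$ (equivalently one could take $C := 2^\omega$, presented as the open, hence $G_\delta$, set $[\lambda]$). Both $A$ and $C$ are $G_\delta$; the containment $A^c \subseteq B$ holds because $A^c = \emptyset$; the containment $B \subseteq C$ holds because $C = 2^\omega$; and $A \cap C = A$ is rapidly null by the choice of $A$. Thus $(A,C)$ is a regularity approximation to $B$ in the sense of Definition~\ref{defn:regularity_measurable_set}, so $B$ is regularity-measurable; letting $B$ range over all completely determined Borel sets yields $\CDM$.

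There is no real obstacle here, and complete determinacy of $B$ is never used: the only points deserving a remark are the verification that $\emptyset$ (as $A^c$) and $2^\omega$ (as $C$) fit the formal codings involved, and the reading of ``$A = 2^\omega$'' as the statement $\forall X\,(X \in A)$, which is precisely the negation of the conclusion of Theorem~\ref{thm:adr}. One could alternatively phrase the argument as ``$\neg\WWKL$ implies $2^\omega$ is null, hence every subset is null, hence $\CDM$ is vacuous,'' but passing through the explicit regularity approximation $(A,A)$ makes the conclusion match Definition~\ref{defn:regularity_measurable_set} on the nose.
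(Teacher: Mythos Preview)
Your proof is correct and essentially identical to the paper's: both invoke Theorem~\ref{thm:adr} to obtain a rapidly null $G_\delta$ set $A$ with $A = 2^\omega$, then take the trivial regularity approximation $(A, 2^\omega)$. The paper chooses $C = 2^\omega$ rather than your $C = A$, but since $A = 2^\omega$ this is the same choice, and both note that complete determinacy plays no role.
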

\begin{proof}
By Theorem \ref{thm:adr}, let $A$ be an rapidly null $G_\delta$ 
set with empty complement.  Let $C=2^\omega$.  Then for any set
$B$, we have $A^c = \emptyset \subseteq B \subseteq C$, 
but $A \cap C$ is rapidly null because $A$ is rapidly null.
\end{proof}
In the presence of $\WWKL$, however, regularity-measurable 
coincides with the same notion of measurability 
given in Definition \ref{defn:measurable_set}.
\begin{proposition}[$\WWKL$] Let $B\subseteq 2^\omega$ be 
any set.  (Formally, the membership of $B$ can be given 
by any formula in the language of second order arithmetic).  Then 
$B$ is regularity-measurable if and only if it is measurable 
in the sense of Definition \ref{defn:measurable_set}.\end{proposition}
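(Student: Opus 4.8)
The plan is to prove the two implications separately, using the rapidly null $G_\delta$ sets produced in Section 3 as the common currency; throughout one works in $\WWKL$, so open and closed subsets of $2^\omega$ are measurable, rapidly null $G_\delta$ sets have measure $0$, $\mu_I=\mu$, and countable additivity holds.

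For the direction from Definition \ref{defn:measurable_set} to Definition \ref{defn:regularity_measurable_set}, I would start from a name $\langle f^i\rangle$ for $f\in L^1(2^\omega)$ with $f=\chi_B$ off a rapidly null $G_\delta$ set, apply Proposition \ref{Prop2.10} to get the open sets $A_n$ with $\mu(A_n)\le 2^{-n}$, and use Corollary \ref{cor:extra} to get the primitive recursive modulus of uniform convergence of the name on each $B_k=2^\omega\setminus\bigcup_{n\ge k}A_n$. Merging the given rapidly null $G_\delta$ set with the $A_n$'s produces a single rapidly null $G_\delta$ set $G=\bigcap_m E_m$, which I would arrange to be decreasing with $E_m\supseteq\bigcup_{i\ge m+2}A_i$, so that off $G$ the name converges to $\chi_B$ and $x\notin E_m$ forces $x\in B_{m+2}$. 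Then I would define the $G_\delta$ sets
\[C=G\cup\bigcap_n\big(\{x:f^{2(n+3)}(x)>\tfrac12\}\cup E_n\big),\qquad A=G\cup\bigcap_n\big(\{x:f^{2(n+3)}(x)<\tfrac12\}\cup E_n\big),\]
noting that $\{x:f^{2(n+3)}(x)>\tfrac12\}$ is a uniformly given open set since $f^{2(n+3)}$ has a modulus of uniform continuity. The verifications are short: if $x\in B$ and $x\notin G$ then $f(x)=1$, and for each $n$ either $x\in E_n$ or $x\in B_{n+2}$ and so $|f^{2(n+3)}(x)-1|\le 2^{-n-2}<\tfrac12$, giving $x\in C$; symmetrically $x\notin B$, $x\notin G$ gives $x\in A$; and a point in $A\cap C$ outside $G$ would, for all large $n$, satisfy both $f^{2(n+3)}(x)>\tfrac12$ and $f^{2(n+3)}(x)<\tfrac12$, so in fact $A\cap C=G$, which is rapidly null. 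Hence $(A,C)$ is a regularity approximation to $B$.

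For the converse I would write $A=\bigcap_n A'_n$ and $C=\bigcap_n C'_n$ with the $A'_n,C'_n$ open and decreasing, set $F_n=(A'_n)^c$ (closed, increasing, with union $A^c$) and $V_n=A'_n\cap C'_n=C'_n\setminus F_n$ (open, decreasing, with intersection $A\cap C$, which is rapidly null). The heart of the matter is to show $\mu(V_n)\to 0$. I would argue by contradiction: if $\mu(V_n)\ge\delta>0$ for all $n$, choose for each $k$ a clopen $D_k\subseteq V_k$ with $\mu(V_k\setminus D_k)<2^{-k}\delta/4$, so that $\tilde V_n=\bigcap_{k\le n}D_k$ is a decreasing sequence of clopen sets with $\mu(\tilde V_n)>\mu(V_n)-\delta/2\ge\delta/2$; then $F=\bigcap_n\tilde V_n$ is closed with $F^c=\bigcup_n\tilde V_n^c$ an increasing union of clopen sets, so by countable additivity $\mu(F)=\lim_n\mu(\tilde V_n)\ge\delta/2$, contradicting that $F\subseteq\bigcap_n V_n$ lies inside a rapidly null set (hence has measure $0$ by Theorem \ref{Thm2.8}). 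Once $\mu(V_n)\to 0$, I would pass to a subsequence with $\mu(V_{n_k})<2^{-k}$; since $F_{n_{k+1}}\setminus F_{n_k}\subseteq A^c\setminus F_{n_k}\subseteq C'_{n_k}\setminus F_{n_k}=V_{n_k}$, this makes $\langle\chi_{F_{n_k}}\rangle$ rapidly Cauchy in $L^1$, and approximating each $\chi_{F_{n_k}}$ by a clopen characteristic function $\chi_{D'_k}$ with $\mu(F_{n_k}\triangle D'_k)<2^{-k}$ produces a name for some $f\in L^1$. Using Corollary \ref{cor:extra} to locate where the name converges, together with the fact that the points lying in infinitely many $F_{n_k}\triangle D'_k$ form a rapidly null $G_\delta$ set, one gets $f=\chi_{\bigcup_n F_n}=\chi_{A^c}$ off a rapidly null $G_\delta$ set; since $\chi_{A^c}=\chi_B$ off the rapidly null set $A\cap C$, this gives $f=\chi_B$ off a rapidly null $G_\delta$ set, which is Definition \ref{defn:measurable_set}.

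The step I expect to be the real obstacle is $\mu(V_n)\to 0$, i.e.\ continuity from above for a decreasing sequence of open sets whose intersection is rapidly null. Since $\WWKL$ does not prove the Heine--Borel compactness of $2^\omega$, one cannot simply extract a finite subcover; the approximation-by-clopen-sets argument above is designed precisely to reduce the problem to continuity from above for \emph{clopen} sets, which follows from countable additivity applied to the open complements. A second point requiring care is that all the almost-everywhere statements invoked are the $\WWKL$-strength ones (Corollary \ref{cor:extra} and explicitly presented rapidly null $G_\delta$ sets) rather than the $\ACA$-strength ones such as Corollary \ref{Cor2.11} or Proposition \ref{Prop2.13}, so that the whole argument genuinely lives over $\WWKL$.
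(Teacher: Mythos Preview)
Your proof is essentially correct, but both directions differ from the paper's arguments, and one step in your harder direction needs more care.

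For measurable $\Rightarrow$ regularity-measurable, the paper's argument is much shorter: it simply sets $A_n=\{x:f_n(x)<2/3\}$ and $C_n=\{x:f_n(x)>1/3\}$ and uses the Chebyshev-type bound $\tfrac13\,\mu(A_n\cap C_n)\le\int_{A_n\cap C_n}|f_n-\chi_B|\le\|f_n-\chi_B\|_1\le 2^{-n+1}$ to see that $A\cap C$ is rapidly null. Your version, which folds the exceptional rapidly null $G_\delta$ set $G$ into the levels $E_n$ and uses the uniform modulus of Corollary~\ref{cor:extra}, is longer but actually verifies the containments $A^c\subseteq B\subseteq C$ explicitly, which the paper's sketch leaves to the reader.

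For regularity-measurable $\Rightarrow$ measurable, the paper takes the hypothesis ``$A\cap C$ is rapidly null'' intensionally: since $A\cap C$ is presented as $\bigcap_n(A_n\cap C_n)$, it reads off $\mu_I(A_n\cap C_n)<2^{-n}$ directly. With that in hand, it uses $A_n\cup C_n=2^\omega$ to find a stage $s$ with $\mu(2^\omega\setminus(A_{n+1,s}\cup C_{n+1,s}))<2^{-(n+1)}$, and the clopen characteristic functions $\chi_{C_{n+1,s}}$ are then immediately rapidly Cauchy. Your extra work proving $\mu(V_n)\to 0$ by contradiction is unnecessary under this reading of the definition, and it is also the one place where your $\WWKL$ bookkeeping slips: the step ``$\mu(F)=\lim_n\mu(\tilde V_n)\ge\delta/2$, hence has measure $0$ by Theorem~\ref{Thm2.8}'' presupposes both that $\lim_n\mu(\tilde V_n)$ exists as a real and that $\chi_F\in L^1(2^\omega)$, neither of which is available over $\WWKL$ alone. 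The contradiction is still obtainable (apply countable additivity to the increasing open cover $\tilde V_n^c\cup U_m$ of $2^\omega$, where $\langle U_m\rangle$ is the rapidly null presentation, and split any near-full $f\in C(2^\omega)$ across the clopen partition $\tilde V_n,\tilde V_n^c$), but not via the route you wrote. Similarly, ``choose clopen $D_k\subseteq V_k$ with $\mu(V_k\setminus D_k)<2^{-k}\delta/4$'' tacitly assumes the tail of $\mu_I(V_k)$ becomes small, which again is an $\ACA$-flavoured statement.

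In short: your first direction is a careful expansion of the paper's idea; your second direction is a genuinely different and more laborious route that avoids the paper's intensional reading of ``rapidly null,'' but the specific contradiction you wrote needs to be rerouted through countable additivity rather than Theorem~\ref{Thm2.8} to stay inside $\WWKL$.
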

\begin{proof}
Suppose $B$ is regularity-measurable.  It follows that $A \cup C = 2^\omega$.
Therefore, if $A = \bigcap_n A_n$ and $C = \bigcap_n C_n$, we have for each
$n$ that $A_n \cup C_n = 2^\omega$.  Using $\WWKL$, it follows that 
$\mu(A_n \cup C_n) = 1$, while $\mu(A_n \cap C_n) < 2^{-n}$ because 
$A \cap C$ is rapidly null.  Define a sequence of functions $f_n:2^\omega\rightarrow \{0,1\}$ and open sets $B_n$ as follows.  Given $n$, let $s$ be large enough that $\mu(D_{n+1,s}) < 2^{-(n+1)}$, where we define
$$D_{n,s} = 2^\omega \setminus (A_{n,s} \cup C_{n,s}).$$  Let $f_n$ be the 
characteristic function of $C_{n+1,s}$, and let 
$$B_n = (A_{n+1} \cap C_{n+1}) \cup D_{n+1,s}.$$
Then $\mu(B_n) < 2^{-n}$.  
We have $$||f_n - f_m||_1 = \mu(A_{n+1,s} \Delta A_{m+1,t})$$
where $s$ and $t$ are chosen as in the definition.  Since 
$A_{n+1,s} \Delta A_{m+1,t} \subseteq B_n \cup B_m$, 
the sequence $\langle f_n\rangle$ is rapidly Cauchy 
and $f_n(x)$ converges to $\chi_B(x)$ for all $x$ outside of $\bigcap_n (\bigcup_{k>n} B_k)$.

On the other hand, if $B$ is measurable in the sense of Definition 
\ref{defn:measurable_set}, then if $\langle f_n\rangle_{n\in\omega}$ 
is an $L^1$-name for $\chi_B$,
the sets $A_n = \{x : f_n(x) < 2/3\}$ and $C_n = \{x : f_n(x) > 1/3\}$
demonstrate that $B$ is regularity-measurable.  This follows because, 
letting $D = A_n\cap C_n$, we have
$$\frac{1}{3}\mu(D) = \int_D \frac{1}{3} \leq  \int_D |f_n - \chi_B| \leq ||f_n-\chi_B||_1 \leq 2^{-n+1}.$$
\end{proof}

The first step in evaluating the strength of $\CDM + \WWKL$ is immediate.
\begin{proposition}\label{joinACA}
Over $\WWKL$, the statement ``Every open subset of $2^\omega$ 
is measurable'' is equivalent to $\ACA$.
\end{proposition}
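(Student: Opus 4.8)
The plan is to prove the two implications separately, over $\WWKL$; the forward implication in fact needs only $\ACA$.

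\emph{From $\ACA$ to measurability of open sets.} Fix an open $U\subseteq 2^\omega$; in $\RCA$ we may present it as $U=\bigcup_i[p_i]$ with $\{p_i:i\in\omega\}$ prefix-free. Work in $\ACA$. The partial sums $\sum_{i<n}2^{-|p_i|}$ form a bounded increasing sequence of reals, so by arithmetic comprehension the real $r=\sum_i 2^{-|p_i|}$ exists, and hence I can compute indices $n_0<n_1<\cdots$ with $\sum_{i\ge n_k}2^{-|p_i|}<2^{-k}$. Put $f_k=\chi_{\bigcup_{i<n_k}[p_i]}$, an ideal continuous function of $C(2^\omega)$. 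Since $\bigcup_{n_k\le i<n_{k+1}}[p_i]$ is a finite disjoint union of cylinders, we get $\|f_{k+1}-f_k\|_1=\sum_{n_k\le i<n_{k+1}}2^{-|p_i|}<2^{-k}$ (finite additivity, available in $\RCA$), so $\langle f_k\rangle$ is rapidly Cauchy for the $L^1$ norm and names some $g\in L^1(2^\omega)$. Now for every $x$ we have $f_k(x)=1$ iff $x\in\bigcup_{i<n_k}[p_i]$, so $f_k(x)$ increases to $\chi_U(x)$ pointwise; meanwhile Corollary \ref{Cor2.11} gives that $\langle f_k\rangle$ converges pointwise to $g$ outside an explicit rapidly null $G_\delta$ set. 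Hence $g=\chi_U$ outside that rapidly null $G_\delta$ set, which exhibits $U$ as measurable in the sense of Definition \ref{defn:measurable_set}.

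\emph{From measurability of open sets to $\ACA$.} Here I would use the standard equivalence, over $\RCA$, between $\ACA$ and the statement that the range of every injection $h:\omega\to\omega$ exists. Given such an $h$, put $X=\operatorname{range}(h)$ and define $U=\bigcup_n[0^{e(n)}1]$ with $e(n)=2^{h(n)+2}$. Injectivity of $h$ makes the exponents $e(n)$ distinct, so the strings $0^{e(n)}1$ are prefix-free, $U$ is open, and $\mu_I(U)=\sum_n 2^{-e(n)-1}=\sum_{k\in X}2^{-2^{k+2}-1}$. Apply the hypothesis to $U$: by the discussion following Theorem \ref{Thm2.8}, the existence of a measurable characteristic function for the open set $U$ forces $\mu(U)$ to exist in the model, and under $\WWKL$ that value is exactly $\mu_I(U)$, so the real $r:=\sum_{k\in X}2^{-2^{k+2}-1}$ genuinely exists in the model. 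The doubly-exponential spacing of the exponents then makes membership in $X$ computable from $r$: to decide whether $k\in X$, subtract from $r$ the contributions $2^{-2^{j+2}-1}$ for those $j<k$ already found to lie in $X$; the remainder equals $2^{-2^{k+2}-1}\cdot[k\in X]$ up to an error below $2^{-2^{k+3}}$ coming from all larger exponents, and this error is far smaller than the gap, so comparing the remainder to $2^{-2^{k+2}-2}$ settles the question. Thus $X$ is $\Delta^0_1$ in $r$ and exists by $\Delta^0_1$-comprehension, which yields $\ACA$.

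I expect the only real (and still modest) obstacle to lie in the forward direction: it is precisely the convergence of $\sum_i 2^{-|p_i|}$ to a point of the model that lets us locate a rapidly Cauchy subsequence of the $f_k$, hence an $L^1$-name for $\chi_U$, and the reverse direction shows this use of $\ACA$ is unavoidable. The only point needing care in the reverse direction is the arithmetic of the final decoding step, which is why the cylinder lengths $e(n)$ are taken to grow lacunarily rather than, say, linearly: with linear growth the relevant digit of $\mu_I(U)$ could be ambiguous when $X$ is cofinite, whereas the lacunary choice makes the contributions at different scales unambiguously separable.
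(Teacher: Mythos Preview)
Your proof is correct. The forward direction spells out what the paper dismisses in one sentence (``It is clear that $\ACA$ proves the given statement''), and your argument via a rapidly Cauchy subsequence of clopen approximations is exactly how one would make that clear.

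The reverse direction is where you and the paper genuinely diverge. The paper uses the characterization of $\ACA$ as ``every bounded increasing sequence of reals has a supremum'': given $(a_n)$ increasing and bounded by $1$, it builds an open $U$ with $\mu_I(U)=\sup_n a_n$ directly (essentially $U$ is the interval $[0,\sup_n a_n)$ read in Cantor space), so that once $\mu(U)$ exists as a number, the supremum exists and we are done---no decoding required. You instead use the range-of-injection characterization and encode $X=\operatorname{range}(h)$ into $\mu_I(U)$ via lacunary exponents, which forces you to carry out a (correct, but extra) digit-extraction argument to recover $X$ from $r$. Both routes work; the paper's is shorter because it sidesteps the decoding entirely, while yours has the minor advantage of making the role of $\WWKL$ (to identify $\mu(U)$ with $\mu_I(U)$) a bit more visibly essential, and of yielding a concrete computable-in-$r$ description of the target set rather than just its existence as a real.
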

\begin{proof}
It is clear that $\ACA$ proves the given statement.  In the other 
direction, given an increasing sequence of real numbers $\langle a_n\rangle$ 
with each $a_n < 1$, let $U$ be an open set designed so that 
$\mu_I(U) = \sup_n a_n$.  For example, let $U$ be the 
set which contains exactly 
those cylinders $[p\concat 0]$ such that for some $n$, we have $.p\concat1<a_n$, 
where $.p\concat 1$ denotes the rational number with binary decimal 
expansion given by $p\concat1$.  By $\WWKL$, $\mu_I(U) = \mu(U)$.
But $\mu(U)$ exists as a number, thus $\sup_n a_n$ exists.
\end{proof}

Combining Propositions \ref{meetRCA} and \ref{joinACA}, we arrive 
at the following curiosity.  Let $\mathsf{OSM}$ be the statement 
``Every open set is regularity-measurable''.  Then by Proposition 
\ref{joinACA}, we have that $\ACA$ is equivalent to $\WWKL + \mathsf{OSM}$,
while Proposition \ref{meetRCA} shows that 
$\RCA$ proves $\WWKL \vee \mathsf{OSM}$ (here $\vee$ denotes a disjunction 
of two principles, not a 
a join operator on those principles).  Thus we have a diamond 
formed of reasonably natural principles, though it must be admitted that 
$\mathsf{OSM}$ does not mean much outside of $\WWKL$.  We 
are not aware of any other diamond in reverse mathematics.  By a diamond 
here we just mean informally an incomparable pair of principles $A$ and $B$ such that 
$A + B$ is equivalent to some principle of interest, while 
$A \vee B$ follows from $\RCA$.

We return now to our main discussion of the principle $\CDM$.  
One direction of Proposition \ref{joinACA} can be extended to 
the Borel case as follows.

\begin{proposition}\label{prop:cdm-implies-lwca}
Over $\WWKL$, $\CDM$ implies $\LwCA$.
\end{proposition}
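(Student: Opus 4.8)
The plan is to reduce an arbitrary instance of $\LwCA$ to an instance of $\CDM$. Suppose $\langle \phi_n\rangle_{n\in\omega}$ is a sequence of completely determined formulas of $L_{\omega_1,\omega}$; I must show $\{n : \phi_n \text{ is true}\}$ exists. Each $\phi_n$ is a well-founded tree with $\cap/\cup$ at interior nodes and ${\tt true}/{\tt false}$ at the leaves, and being completely determined means it comes with a truth-valuation map. The idea is to build, uniformly in $n$, a completely determined Borel code $T_n\subseteq 2^\omega$ that ``copies'' $\phi_n$ but arranges the measure so that $\lambda(|T_n|)$ is bounded away from $0$ and $1$ in a way that distinguishes truth from falsity --- specifically, I will aim for $\lambda(|T_n|) = 2/3$ if $\phi_n$ is true and $\lambda(|T_n|) = 1/3$ if $\phi_n$ is false. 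Then I package all the $T_n$ into a single completely determined Borel code $B$ by putting them over disjoint cylinders (say $T_n$ over the cylinder determined by $0^n1$, suitably rescaled), so that $B$ is completely determined exactly because each $\phi_n$ is. Applying $\CDM$ to $B$ gives a regularity approximation, hence (using $\WWKL$ and the material in Section 3) an actual measure value $\lambda(|B|)$ that exists in the model, and from this, together with countable additivity from $\WWKL$, one can read off $\lambda(|T_n|)$ for each $n$ and thus decide whether $\phi_n$ is true.

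The main work is the construction of $T_n$ from $\phi_n$. The natural move is a recursive ``measure-coding'' of Boolean operations: represent ${\tt true}$ by a clopen set of measure close to $1$ and ${\tt false}$ by one of measure close to $0$, and at a $\cup$-node (disjunction) take a union and at a $\cap$-node (conjunction) take an intersection, while keeping careful track of measures so that they stay separated. The difficulty is that infinitary unions can accumulate measure and infinitary intersections can lose it, so a naive copy will not keep the two target values $1/3$ and $2/3$ separated through unboundedly many nested operations. The fix is the standard one: at the node for a disjunction $\bigcup_k \psi_k$, allocate to the subcode for $\psi_k$ only a $2^{-(k+1)}$-fraction of the available measure, so that a single true disjunct already contributes a definite positive amount while all false disjuncts together contribute a negligible amount; dually, at a conjunction $\bigcap_k \psi_k$, work inside the complement. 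Because each $\phi_n$ is a well-founded tree, this recursion terminates along every branch, and induction on the rank of $\phi_n$ (available in $\ACA$, which follows from $\WWKL$ here via Proposition \ref{joinACA} once we know open sets are measurable --- but note we only need arithmetic transfinite induction along the well-founded tree $\phi_n$, which is available outright) shows that the measure of the coded set at each node is $\geq 1-2^{-(\text{depth})}$ when the corresponding subformula is true and $\leq 2^{-(\text{depth})}$ when it is false; in particular at the root the two cases give measures on opposite sides of $1/2$, and after a final rescaling we land at $2/3$ versus $1/3$.

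The second point to check is that $B$ is genuinely completely determined in $\RCA$ (so that $\CDM$ applies): given any $X\in 2^\omega$, $X$ lies in exactly one of the disjoint cylinders, say the one carrying $T_n$; an evaluation map for $X$ in $T_n$ is obtained by transcribing the truth-valuation map of $\phi_n$ through the construction (the leaves of $T_n$ are clopen, so their values are decidable, and the $\cup/\cap$ structure of $T_n$ mirrors that of $\phi_n$), and this assembles to an evaluation map for $X$ in $B$. Finally, from a regularity approximation $(A,C)$ to $|B|$ one gets, as in the proof that regularity-measurable implies measurable, a measurable characteristic function for $|B|$ and hence the number $\lambda(|B|)$ in the model; restricting to the $n$th cylinder and using countable additivity ($\WWKL$) yields $\lambda(|T_n|)\in\{1/3,2/3\}$, and $\phi_n$ is true iff this value exceeds $1/2$. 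Since the whole construction is uniform in $n$, the set $\{n : \phi_n\text{ is true}\} = \{n : \lambda(|T_n|) > 1/2\}$ exists by $\Delta^0_1$ (indeed arithmetic) comprehension. I expect the bookkeeping of the measure bounds through the transfinite recursion to be the only real obstacle; everything else is routine once the coding is set up.
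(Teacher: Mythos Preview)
Your reduction strategy is right in spirit, but the recursive measure-allocation scheme you describe does not maintain the invariant you claim. At a disjunction $\bigvee_k \psi_k$ you allocate a $2^{-(k+1)}$-fraction of the ambient measure to the code for $\psi_k$. If the only true disjunct happens to be $\psi_k$ for some large $k$, then the whole union has relative measure at most $2^{-(k+1)}$, which is nowhere near $1-2^{-(\text{depth})}$; in fact there is no uniform positive lower bound on the measure in the ``true'' case. Dually, the complementary trick for conjunctions runs into the same problem. So the two target values $1/3$ and $2/3$ are not actually separated by your construction, and the induction you sketch cannot go through.

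The fix is to realise that no delicate bookkeeping is needed at all. If at every leaf of $\phi_n$ you replace {\tt true} by the full cylinder $[0^n1]$ and {\tt false} by $\emptyset$, then every subformula codes a set that is either all of $[0^n1]$ or empty: a union of such sets is $[0^n1]$ iff at least one term is, and an intersection is $[0^n1]$ iff all terms are, so the $\{[0^n1],\emptyset\}$ dichotomy is preserved exactly through arbitrary infinitary $\cup/\cap$. This is precisely the paper's construction. Taking the union over $n$ of these modified codes gives a single Borel code which is completely determined (for $X\in[0^n1]$ use the given evaluation map of $\phi_n$; on every other $[0^m1]$ all leaves, hence all nodes, evaluate to $0$), and a measurable characteristic function $f$ for it satisfies $2^{n+1}\int_{[0^n1]} f \in \{0,1\}$ with value $1$ exactly when $\phi_n$ is true. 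Your packaging over disjoint cylinders and your use of $\WWKL$ to extract the integrals are fine; it is only the inner coding of a single $\phi_n$ that needs to be replaced by this much simpler ``all-or-nothing'' substitution.
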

\begin{proof}
Given $\langle \phi_n \rangle_{n\in \omega}$ a sequence of completely determined 
formulas of $L_{\omega_1,\omega}$, turn them into Borel codes by 
change $\bigcap$ to $\bigwedge$, $\bigcup$ to $\bigvee$, and
changing their leaves as follows.  If $\phi_n$ has {\tt true} at a leaf, 
replace it with $[0^n1]$.  If $\phi_n$ has {\tt false} at a leaf, replace it with 
$\emptyset$.  Now take the union of all of these codes.  The resulting 
code is completely determined because each $\phi_n$ was completely 
determined and each $X \in2^\omega$ belongs to at most one cylinder $[0^n1]$.
If $f$ is a measurable characteristic function, then $f$ is almost surely 1 
on $[0^n1]$ whenever $\phi_n$ is true, and almost surely 0 on 
$[0^n1]$ whenever $\phi_n$ is false.  Thus the sequence 
$\langle 2^n \int_{[0^n1]} f\rangle_{n\in \omega}$
witnesses the satisfaction of $\LwCA$; this sequence 
assigns 1 to the true formulas and 0 to the false ones.
\end{proof}

The classical way of showing that every Borel set is measurable
is to use arithmetic transfinite recursion to 
define a regularity approximation to $\makeset{T_\sigma}$
for each $\sigma \in T$.  We present
an effectivization of the 
classical proof which is particularly well-suited to our subsequent 
analysis.

\begin{definition}
Let $T$ be a code for a Borel set.  A \emph{measure decomposition}
for $T$ is a collection $\langle f_\sigma : \sigma \in T\rangle$, where 
each  $f_\sigma \in L^1(2^\omega)$,
such that
\begin{enumerate}
\item If $\sigma$ is a leaf, then $f_\sigma$ is the characteristic 
function of $|T_\sigma|$.
\item If $\sigma$ is a union, then $f_\sigma = \sup_n f_{\sigma n}$.
\item If $\sigma$ is an intersection, then $f_\sigma = \inf_n f_{\sigma n}$.
\end{enumerate}
\end{definition}
All three equalities above refer to equality in the sense of the metric space $L^1(2^\omega)$.  For example, the equation $f_\sigma = \sup_n f_{\sigma n}$ is 
shorthand for
$$\lim_{N \rightarrow \infty} \left(\sup_{n<N} f_{\sigma n}\right) = f_\sigma$$
and similarly for the other equation.  In all cases, $n$ ranges only over those 
numbers for which $\sigma n \in T$.

\begin{proposition}[$\ACA$]\label{Prop3.7}
Suppose $T$ is a code for a completely determined Borel set.  If 
$T$ has a measure decomposition, then $|T|$ is measurable.
\end{proposition}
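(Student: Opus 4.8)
The plan is to show that the root function $f_\lambda$ of the given measure decomposition agrees with $\chi_{|T|}$ outside a rapidly null $G_\delta$ set, which by Definition \ref{defn:measurable_set} is precisely what it means for $|T|$ to be measurable. First I would use effective transfinite recursion along the well-founded tree $T$, available in $\ACA$, to attach to each $\sigma \in T$ a rapidly null $G_\delta$ set $N_\sigma$, with a code produced uniformly from $T$, the decomposition, and $\sigma$, according to the following recipe. If $\sigma$ is a leaf, set $N_\sigma = \emptyset$ (here $f_\sigma$ is literally the characteristic function of the clopen set $|T_\sigma|$). If $\sigma$ is a union node, combine the codes $\langle N_{\sigma n}\rangle_n$ (where $n$ ranges over values with $\sigma n \in T$) into a single rapidly null $G_\delta$ set $E_1 \supseteq \bigcup_n N_{\sigma n}$ by the universal-test construction recalled just before Proposition \ref{Prop2.10}; then search for a strictly increasing sequence $\langle N_k \rangle$ with $\|\sup_{n<N_k} f_{\sigma n} - f_\sigma\|_1 < 2^{-k}$ (this search halts in $\ACA$ because the decomposition hypothesis forces these $L^1$ distances, which are computable from the given names as convergent sequences of rationals, to tend to $0$, and the condition ``$< 2^{-k}$'' is $\Sigma^0_1$); apply Proposition \ref{Prop2.13} to the rapidly converging sequence $h_k := \sup_{n<N_k} f_{\sigma n} \to f_\sigma$ to obtain an explicitly coded rapidly null $G_\delta$ set $E_2$ outside of which $h_k(x) \to f_\sigma(x)$ (and all the relevant names converge); and put $N_\sigma = E_1 \cup E_2$. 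For an intersection node, do the same with $\sup$ replaced by $\inf$.

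Next I would verify the key property by a \emph{pointwise} transfinite induction, which keeps the induction formula arithmetic. Fix $X \in 2^\omega$ and let $e_X : T \to \{0,1\}$ be its evaluation map, which exists by complete determination and is unique in $\ACA$; note that the restriction of $e_X$ to any subtree $T_\sigma$ is an evaluation map for $X$ in $T_\sigma$, so $e_X(\sigma) = \chi_{|T_\sigma|}(X)$ for every $\sigma \in T$. By arithmetic transfinite induction along $T$ (with $X$, $e_X$, and the decomposition as parameters, so the formula is arithmetic) I would prove: for every $\sigma \in T$, if $X \notin N_\sigma$ then $f_\sigma(X)$ is well-defined and equals $e_X(\sigma)$. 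The leaf case is immediate. At a union node $\sigma$ with $X \notin N_\sigma = E_1 \cup E_2$: for each $n$ we have $X \notin N_{\sigma n} \subseteq E_1$, so by the inductive hypothesis $f_{\sigma n}(X) = e_X(\sigma n)$; hence $h_k(X) = \sup_{n<N_k} e_X(\sigma n)$, which as $k \to \infty$ is eventually constant equal to $\sup_n e_X(\sigma n) = e_X(\sigma)$ by the union-node clause in the definition of an evaluation map; since also $h_k(X) \to f_\sigma(X)$ because $X \notin E_2$, we conclude $f_\sigma(X) = e_X(\sigma)$. The intersection case is dual. Taking $\sigma = \lambda$ and letting $X$ range over $2^\omega \setminus N_\lambda$ gives $f_\lambda = \chi_{|T|}$ outside the rapidly null $G_\delta$ set $N_\lambda$, as desired.

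I expect the main work to be bookkeeping rather than anything conceptually deep. One must check that the map sending $\langle N_{\sigma n}\rangle_n$ to $N_\sigma$ is uniform and effective, so that the transfinite recursion producing the codes $N_\sigma$ is legitimate in $\ACA$ and the subsequence search halts provably, and one must confirm that the induction formula in the second step really is arithmetic once $e_X$ is a parameter --- this is the reason for carrying out the induction pointwise in $X$ rather than directly about the sets $|T_\sigma|$, whose membership relation is only $\Sigma^1_1$. All of the analytic input --- combining rapidly null $G_\delta$ sets, the pointwise convergence of rapidly converging $L^1$ sequences from Proposition \ref{Prop2.13}, and the facts about evaluation maps under complete determination --- is either recalled in the preceding section or immediate from the definitions.
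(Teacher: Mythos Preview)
Your strategy matches the paper's: build a single rapidly null $G_\delta$ outside which the identities of the measure decomposition hold pointwise, then fix $X$ outside it and verify by arithmetic transfinite induction that $\sigma \mapsto f_\sigma(X)$ agrees with the evaluation map $e_X$. Two remarks.

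First, a small error at the leaves: the definition of a measure decomposition only guarantees $f_\sigma = \chi_{|T_\sigma|}$ as elements of $L^1(2^\omega)$ (the paper is explicit that all three equalities are meant in the $L^1$ sense), so you cannot take $N_\sigma = \emptyset$ there. You need the bad set from Corollary~\ref{Cor2.12} applied to the given name of $f_\sigma$ and a canonical name for the characteristic function of the clopen set $|T_\sigma|$; this is exactly what the paper does.

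Second, the paper avoids your transfinite recursion for building the $N_\sigma$. The point is that the local bad set at each node (your $E_2$, plus the leaf set just mentioned, plus a convergence set from Corollary~\ref{Cor2.11}) depends only on the names in the decomposition, not on the children's $N_{\sigma n}$. So one can list one bad set per $\sigma \in T$ \emph{uniformly and non-recursively}, and then combine them all into a single rapidly null $G_\delta$ set $A$ via the universal-test trick. The induction hypothesis then becomes simply ``$f_\sigma(X)$ agrees with $e_X(\sigma)$'' for the fixed $X \notin A$, with no side condition ``$X \notin N_\sigma$'' to carry along. This sidesteps the need to check that your recursive step (which involves a $\Sigma^0_1$ subsequence search) is effective enough for transfinite recursion in $\ACA$.
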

\begin{proof}
We need to show that $f_{\emptyset}$ is a.e. equal to the 
characteristic function of $|T|$.  This is proved by arithmetic transfinite 
induction on $T$.

Observe that if we were willing to use $\Sigma^1_2$ transfinite induction and 
$\Sigma^1_1$-$\mathsf{AC}$, 
the proof which inducts on the following statement would be very short:
there is a rapidly null $G_\delta$ such that for all $X$ outside of it, 
$f_\sigma(X) = 1$ if and only if $X \in |T_\sigma|$.  Since we want to get away 
with arithmetic transfinite induction only, 
we need to identify the rapidly null $G_\delta$ 
in advance, then fix some $X$ outside it, and then prove $f_\emptyset(X)$ 
is correct by transfinite induction on $T$.

We claim the following collection of
rapidly null $G_\delta$ sets exists:
\begin{enumerate}
\item\label{gdelta1} For all $\sigma$, a rapidly null $G_\delta$ such that for all 
$x$ outside of it, the name of $f$ converges at $x$. 
\item\label{gdelta2} For all leaf $\sigma$, a rapidly null $G_\delta$ set such 
that on its complement, $f_\sigma$ is the characteristic function of $|T_\sigma|$ 
\item\label{gdelta3} For all union $\sigma$, a rapidly null $G_\delta$ set 
such that for all $x$ in its complement, $f_\sigma(x) = \sup_n f_{\sigma n}(x)$
\item\label{gdelta4} For all intersection $\sigma$, same as the above except using 
$\inf_n f_{\sigma n}$.
\end{enumerate}
The sets in (\ref{gdelta1}) are obtained by uniform application of Corollary \ref{Cor2.11} 
to the given names for the functions $f_\sigma$.  The sets in (\ref{gdelta2}) are obtained 
by uniform application of Corollary \ref{Cor2.12} to $f_\sigma$ and a standard name for 
the characteristic function of the clopen set $|T_\sigma|$.  To obtain (\ref{gdelta3}),
use the fact that 
$$\lim_{N \rightarrow \infty} \left(\sup_{n<N} f_{\sigma n}\right) = f_\sigma,$$
define $h_N = \sup_{n<N} f_{\sigma n}$, and
find a sequence $N_i$ such that 
$\langle h_{N_i} \rangle_{i\in\omega}$
is rapidly convergent to $f_\sigma$.  Then apply Proposition \ref{Prop2.13} to 
$\langle h_{N_i}\rangle_{i\in\omega}$ together 
with the given name for $f_\sigma$.  Although we have passed to a subsequence, 
because $h_N(x) \leq h_{N+1}(x)$ for all $x$, it follows that $h_N(x)$ converges 
if and only if $h_{N_i}(x)$ converges.  (It will happen in our situation that 
$h_N(x)$ converges for all $x$, though we do not need this.)
The procedure for (\ref{gdelta4}) is similar.

Let $A$ be a rapidly null $G_\delta$ set which 
contains all the bad-behavior sets above.  Fix $X\not\in A$.  We claim that 
the map which sends $\sigma$ to $f_\sigma(X)$ is an 
evaluation map for $X$ in $T$.  That is, we claim $f_\sigma(X) = 1$ 
if and only if $X \in |T_\sigma|$.
The claim is proved by arithmetic transfinite induction on $T$. 
Observe that $A$ contains all the points at which 
the proposed evaluation map fails to be right at the leaves or 
fails to satisfy the logic of the tree.

In particular, $f_\emptyset(X) = 1$ if and only if $X \in |T|$.
\end{proof}

Uniformly arithmetic in a sequence $\langle f_{\sigma n}\rangle_{n\in\omega}$, we may 
produce the functions $\sup_n f_{\sigma n}$ and 
$\inf_n f_{\sigma n}$.  Therefore, $\ATR$ suffices to 
create measure decompositions for all Borel sets.  However, 
$\ACA$ is enough to guarantee their uniqueness.

\begin{proposition}[$\ACA$]
Suppose that $T$ is a Borel code and 
$\langle f_\sigma \rangle_{\sigma \in T}$ and 
$\langle g_\sigma \rangle_{\sigma \in T}$ are two measure decompositions 
for $T$.  Then for all $\sigma \in T$, $f_\sigma = g_\sigma$ as $L^1$ functions.
\end{proposition}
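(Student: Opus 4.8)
The plan is to show that $f_\sigma = g_\sigma$ in $L^1(2^\omega)$ for every $\sigma \in T$, by arithmetic transfinite induction along the well-founded tree $T$. The tree of a Borel code is well-founded, and the predicate $||f_\sigma - g_\sigma||_1 = 0$ is arithmetic in the two given measure decompositions, so this induction is legitimate in $\ACA$ (arithmetic transfinite induction being available there, as recalled in the preliminaries). At a node $\sigma$, the inductive hypothesis is that $f_{\sigma n} = g_{\sigma n}$ in $L^1$ for every $n$ with $\sigma n \in T$, and the goal is to deduce $f_\sigma = g_\sigma$.

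If $\sigma$ is a leaf there is nothing to inherit: by the definition of a measure decomposition, both $f_\sigma$ and $g_\sigma$ are the characteristic function of the clopen set $\makeset{T_\sigma}$, so $||f_\sigma - g_\sigma||_1 = 0$. Now suppose $\sigma$ is a union node. For each $N$ the finite supremum $h_N := \sup_{n<N} f_{\sigma n}$ is again an element of $L^1$ with a name arithmetic in the given data, and likewise $h_N' := \sup_{n<N} g_{\sigma n}$; by the definition of a measure decomposition, $\langle h_N\rangle_N$ and $\langle h_N'\rangle_N$ converge in $L^1$ to $f_\sigma$ and $g_\sigma$ respectively. The crucial observation is that the finite supremum is non-expansive: pointwise one has $|h_N(x) - h_N'(x)| \leq \max_{n<N} |f_{\sigma n}(x) - g_{\sigma n}(x)| \leq \sum_{n<N} |f_{\sigma n}(x) - g_{\sigma n}(x)|$, so this inequality holds outside the rapidly null $G_\delta$ set obtained by combining the finitely many sets witnessing $||f_{\sigma n} - g_{\sigma n}||_1 = 0$ via Theorem \ref{Thm2.8}. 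Theorem \ref{Thm2.8} together with the triangle inequality then yields
$$||h_N - h_N'||_1 \;\leq\; \sum_{n<N} ||f_{\sigma n} - g_{\sigma n}||_1 \;=\; 0,$$
using the inductive hypothesis. Hence $\langle h_N\rangle_N$ and $\langle h_N'\rangle_N$ agree term by term as $L^1$ elements, so their $L^1$-limits agree: $f_\sigma = g_\sigma$. The intersection case is identical with $\inf$ in place of $\sup$, or one can simply apply the union case to the functions $1 - f_{\sigma n}$ and $1 - g_{\sigma n}$.

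By arithmetic transfinite induction along $T$, the equality $f_\sigma = g_\sigma$ holds for every $\sigma \in T$, as required. I expect no essential difficulty here; the one point demanding care is bookkeeping, namely verifying that the finite sups and infs, the real $||f_\sigma - g_\sigma||_1$, and the relevant rapidly null $G_\delta$ sets are all arithmetic in the two given decompositions, so that the argument really is an arithmetic transfinite induction and hence available in $\ACA$. The sole appeal to $\WWKL$ (available since $\ACA$ proves it) is through Theorem \ref{Thm2.8}, used to pass from pointwise-almost-everywhere inequalities to inequalities of $L^1$ norms.
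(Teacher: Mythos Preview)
Your proof is correct and follows essentially the same approach as the paper: arithmetic transfinite induction on $T$, using that $f_{\sigma n} = g_{\sigma n}$ in $L^1$ implies $\sup_{n<N} f_{\sigma n} = \sup_{n<N} g_{\sigma n}$ in $L^1$ for each $N$, hence the limits agree. The paper states this last step without justification, whereas you spell it out via the non-expansiveness of the finite sup and Theorem~\ref{Thm2.8}; this is a harmless elaboration rather than a different method.
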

\begin{proof}
By arithmetic transfinite induction.  If for all $n$, $f_{\sigma n} = g_{\sigma n}$, 
then for all $N$, $\sup_{n<N} f_{\sigma n} = \sup_{n<N} g_{\sigma n}$.  
Therefore, these sequences have the same limit in the sense of $L^1$.
\end{proof}

Although we will show in the next section that $\WWKL+\CDM$
is strictly weaker than $\ATR$, the existence of measure
decompositions is still necessary for $\WWKL+\CDM$ to hold.
Therefore, any model of $\WWKL+\CDM + \neg \ATR$ will 
need some other way of producing measure decompositions.

\begin{proposition}[$\ACA$]
If $\CDM$ holds,
then every completely determined Borel set has a measure
decomposition.
\end{proposition}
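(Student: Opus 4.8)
The plan is to extract a measure decomposition from a single application of $\CDM$, by first assembling the sections $T_\sigma$ of $T$ into one completely determined Borel code and then reading the functions $f_\sigma$ off the resulting measurable characteristic function. Fix a computable enumeration $\langle \sigma_k\rangle_{k\in\omega}$ of $T$. Each $T_{\sigma_k}$ is completely determined, uniformly, since the restriction of an evaluation map for $X$ in $T$ to $T_{\sigma_k}$ is an evaluation map for $X$ in $T_{\sigma_k}$. Build a Borel code $S$ whose root is a union node with a child $S_k$ for each $k$, where $S_k$ is a copy of $T_{\sigma_k}$ ``relocated'' into the cylinder $[0^k1]$: replace each leaf label $[p]$ by $[0^k1p]$ and each leaf label $[p]^c$ by the clopen set $\bigcup\{[0^k1q] : |q| = |p|,\ q\neq p\}$. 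Each $S_k$ is a well-founded tree computable uniformly in $k$, so $S$ is a well-founded Borel code. If $X\in[0^k1]$, then for $j\neq k$ every leaf of $S_j$ evaluates to $0$ at $X$ (its label is a clopen subset of $[0^j1]$), hence $S_j$ does too, so $X\in\makeset{S}$ iff $X\in\makeset{S_k}$; and via the correspondence $X\mapsto 0^k1X$, an evaluation map for $X$ in $S_k$ is the same thing as an evaluation map for $X'$ in $T_{\sigma_k}$, where $X'$ is $X$ with its initial segment $0^k1$ deleted. If $X=0^\infty$ then every $S_k$ evaluates to $0$ at $X$. In all cases $X$ has an evaluation map in $S$, so $S$ is completely determined; moreover $0^k1X\in\makeset{S}$ iff $X\in\makeset{T_{\sigma_k}}$.

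Now apply $\CDM$. Since $\ACA$ proves $\WWKL$, regularity-measurability of $\makeset{S}$ yields, via the equivalence with Definition \ref{defn:measurable_set}, an $f\in L^1(2^\omega)$ with $f=\chi_{\makeset{S}}$ outside a rapidly null $G_\delta$ set $A$. For each $k$ set $f_{\sigma_k}(X) = f(0^k1X)$. Given an $L^1$-name $\langle f^i\rangle$ for $f$, the maps $X\mapsto f^{i+k+1}(0^k1X)$ form a rapidly Cauchy name for $f_{\sigma_k}$: the cylinder $[0^k1]$ has measure $2^{-k-1}$, which absorbs the $(k+1)$-step index shift. Hence $f_{\sigma_k}\in L^1(2^\omega)$, uniformly in $k$. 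Since $0^k1X\in\makeset{S}$ iff $X\in\makeset{T_{\sigma_k}}$, and since the same measure bound shows that $\{X : 0^k1X\in A\}$ is contained in a rapidly null $G_\delta$ set (obtained from $A$ by an index shift), we get $f_{\sigma_k}=\chi_{\makeset{T_{\sigma_k}}}$ outside a rapidly null $G_\delta$ set, uniformly in $k$; so the sequence $\langle f_\sigma\rangle_{\sigma\in T}$ exists.

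It remains to check that $\langle f_\sigma\rangle_{\sigma\in T}$ is a measure decomposition. At a leaf $\sigma$, the set $\makeset{T_\sigma}$ is clopen and $f_\sigma$ agrees with its characteristic function outside a rapidly null $G_\delta$ set, so $f_\sigma=\chi_{\makeset{T_\sigma}}$ in $L^1$ by Theorem \ref{Thm2.8}. At a union node $\sigma$, uniqueness of evaluation maps in $\ACA$ gives $\makeset{T_\sigma}=\bigcup_n\makeset{T_{\sigma n}}$; combining the exceptional sets of $f_\sigma$ and of all the $f_{\sigma n}$ into one rapidly null $G_\delta$ set, the functions $\sup_{n<N}f_{\sigma n}$ agree a.e. with $\chi_{\bigcup_{n<N}\makeset{T_{\sigma n}}}$, increase monotonically a.e., and tend pointwise a.e. to $\chi_{\makeset{T_\sigma}}=f_\sigma$; countable additivity (available under $\WWKL$) then gives $\lim_N\sup_{n<N}f_{\sigma n}=f_\sigma$ in $L^1$, which is the union clause. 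Intersection nodes are symmetric.

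I expect the combining step to be the main obstacle: one must verify carefully that the relocated code $S$ is still completely determined and that restricting a single measurable characteristic function to the cylinders $[0^k1]$ simultaneously measures every $\makeset{T_{\sigma_k}}$ with uniformly controlled exceptional sets. Once the sequence $\langle f_\sigma\rangle$ is in hand, that it forms a measure decomposition is routine $L^1$ measure theory in $\ACA$, essentially monotone convergence.
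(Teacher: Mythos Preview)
Your argument is correct and follows essentially the same route as the paper: relocate each $T_{\sigma}$ into its own cylinder $[0^k1]$, bundle them under a single union node, apply $\CDM$ once, and read off $f_\sigma(X)=f(0^k1X)$. You supply considerably more detail than the paper does---in particular the explicit treatment of complemented leaves, the rapidly Cauchy name for $f_{\sigma_k}$, and the verification of the three measure-decomposition clauses---and all of this is sound; the paper simply asserts the final step.
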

\begin{proof}
For any Borel 
code $S$, define an operation $S[n]$ as follows.  Whenever a leaf of 
$S$ is labeled by the clopen set $[p_0]\cup \dots \cup [p_k]$, 
replace it with the clopen set $[0^n1p_0]\cup \dots \cup [0^n1p_k]$.  This 
has the effect of shrinking the set coded by $S$ and relocating 
it to live completely inside the cone $[0^n1]$.

Let $h:\omega\rightarrow T$ be a computable surjection.  
If $T$ is completely determined, so is $\tilde T$, where 
$$\tilde T = \bigcup_{n \in \omega} T_{h(n)}[n]$$
Colloquially, $\tilde T$ has been formed by taking each 
subtree $T_\sigma$ of $T$ and giving it its own dedicated part of the 
Cantor space.  Now, if $\tilde T$ is measurable via the function $f\in L^1$,
then the functions
$$f_{\sigma}(X) = f(0^{\min h^{-1}(\sigma)}1\concat X)$$
are a measure decomposition for $T$.
\end{proof}

\section{Results}

In this section we construct an $\omega$-model $\mathcal M$ 
which satisfies $\CDM$ but not $\ATR$.
Let $R$ be a $\Pi^1_1$-random.  Let $\mathcal M$ 
be the $\omega$-model whose second-order part is $\bigcup_{i<\omega} \Delta^1_1(\bigoplus_{k<i}R^{[k]})$, where $R^{[k]}$ denotes the $k$th column of $R$.  

Since the 
strings of $2^{<\omega}$ are in one-to-one correspondence with $\omega$, 
we can assume such a correspondence is fixed and abuse notation 
to also let $G^{[p]}$ denote a column of $G$ whenever $p \in 2^{<\omega}$
 and $G \in 2^\omega$.

\begin{proposition}
The model $\mathcal M$ does not satisfy $\ATR$.
\end{proposition}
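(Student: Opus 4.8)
The plan is to exploit the fact that every set in $\mathcal M$ has $\omega_1^{ck}$ as its $\omega_1$, so that $\mathcal M$ cannot perform the transfinite recursions that $\ATR$ demands along pseudo-well-orderings.

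First I would check that each $Z_i := \bigoplus_{k<i} R^{[k]}$ is $\Pi^1_1$-random. Since $R$ is $\Pi^1_1$-random and any regrouping of the columns of $R$ into two reals again yields a $\Pi^1_1$-random real, applying the Hjorth--Nies theorem to the split of $R$ into $Z_i$ and the join of the remaining columns shows that $Z_i$, being one half of a $\Pi^1_1$-random pair, is itself $\Pi^1_1$-random. By Theorem~\ref{thm:nomoveomega1} this gives $\omega_1^{Z_i} = \omega_1^{ck}$ for every $i$, and since each set in the second-order part of $\mathcal M$ is $\Delta^1_1(Z_i)$ for some $i$, every $W\in\mathcal M$ satisfies $\omega_1^W = \omega_1^{ck}$. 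In particular $\kO$ is hyperarithmetic in no member of $\mathcal M$, because $\omega_1^{\kO} > \omega_1^{ck}$.

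Now suppose for contradiction that $\mathcal M\models\ATR$. Fix a recursive pseudo-well-ordering $L$ --- a recursive ill-founded linear order whose well-founded part has order type $\omega_1^{ck}$; for instance the Kleene--Brouwer ordering of a recursive tree $S\subseteq\omega^{<\omega}$ with $[S]\neq\emptyset$ and $[S]\cap HYP=\emptyset$ (writing $[S]$ for the set of infinite paths through $S$), or a recursive presentation of an initial segment of $\kO^\ast$ below a pseudo-ordinal. I claim $\mathcal M$ contains no infinite $L$-descending sequence, so that $\mathcal M$ thinks $L$ is a well-ordering. An $L$-descending sequence is essentially a path through $S$, hence a non-hyperarithmetic real, so it suffices to see that no $W\in\mathcal M$ hyperarithmetically computes a path through $S$; this is where the $\Pi^1_1$-randomness of the $Z_i$ is genuinely used. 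The set of reals hyperarithmetically computing a path through $S$ is null --- a non-null piece of it would, by Sacks's measure-theoretic uniformity, produce a hyperarithmetic path through $S$ --- and moreover it is a $\Pi^1_1$-null set, by the uniformity of this nullity over the natural $\Pi^1_1$ indexing of the pieces; so the $\Pi^1_1$-random $Z_i$ avoids it, and no $\Delta^1_1(Z_i)$ real is an $L$-descending sequence. This is the measure-theoretic counterpart of the genericity argument of \cite{ADMSW}.

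Granting the claim, $\mathcal M$ satisfies $\ACA$ and believes $L$ is a well-ordering, so $\ATR$ supplies a jump hierarchy $H$ along $L$ with $H\in\mathcal M$, hence $H\in\Delta^1_1(Z_i)$ for some $i$. But a jump hierarchy along a recursive pseudo-well-ordering is too complex for this: as in \cite{ADMSW} (and classically, cf.\ \cite{sosa}) one extracts from $H$ a witnessing set --- a descending sequence through $L$, which is barred from $\mathcal M$ by the previous paragraph, or a copy of $\kO$, which is barred by $\omega_1^{\kO}>\omega_1^{ck}=\omega_1^{Z_i}$ --- giving the desired contradiction, so $\mathcal M\not\models\ATR$. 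The two delicate points are the no-descending-sequence claim, where the bare equality $\omega_1^{Z_i}=\omega_1^{ck}$ does not suffice (Gandy-basis reals share it yet sit on such pseudo-well-orderings), so the $\Pi^1_1$-randomness and the Sacks-style measure argument are essential, and the extraction of a witness from $H$; the latter is the main obstacle, handled by transporting the corresponding step of \cite{ADMSW}.
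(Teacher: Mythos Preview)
Your overall strategy coincides with the paper's: pick a computable pseudo-ordinal $a^\ast$, show $\mathcal M$ contains no $<_\ast$-descending sequence below it (so $\mathcal M$ believes $a^\ast$ is an ordinal), and show $\mathcal M$ contains no jump hierarchy on it either. The paper handles both halves with one stroke: if some $H_b^{R_0}$ with $b\in\kO$ computed either object, then the tail event $\{X : H_b^X \text{ computes a descending sequence or a jump hierarchy on } a^\ast\}$ is $\Sigma^0_{b+O(1)}$ and of measure $1$, hence contains a hyperarithmetic witness (the paper invokes a $(b+O(1))$-generic in $HYP$), contradicting that $a^\ast$ has no hyperarithmetic descending sequence and no hyperarithmetic jump hierarchy. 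Your Sacks-uniformity finish for the descending-sequence case is the same idea in slightly different packaging.

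The jump-hierarchy half of your argument, however, has a real gap. You assert that from a jump hierarchy $H$ along the recursive pseudo-well-ordering $L$ one can extract either a descending sequence in $L$ or a copy of $\kO$. Neither extraction is available in general. The class of jump hierarchies on $L$ is $\Pi^0_2$, so by the Gandy basis theorem it has a member $H$ with $\omega_1^H=\omega_1^{ck}$; such $H$ cannot compute $\kO$. And it is a classical fact (underlying the separation of $\ATR$ from stronger choice principles via Steel forcing) that there are jump hierarchies on Harrison-type orderings that compute no descending sequence through them; indeed this is exactly why $\omega$-models of $\ATR$ can contain pseudo-well-orderings. So your proposed dichotomy fails, and the appeal to \cite{ADMSW} does not rescue it, since that paper also argues directly that no jump hierarchy lies in the model rather than extracting anything from one.

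The repair is immediate: simply rerun your Sacks argument with ``jump hierarchy on $L$'' in place of ``path through $S$.'' For fixed $b\in\kO$ the set $\{X : H_b^X \text{ computes a jump hierarchy on } L\}$ is $\Sigma^0_{b+O(1)}$; it is null because positive measure would, by Sacks, produce a hyperarithmetic jump hierarchy on $L$, impossible since any such hierarchy dominates every $H_c$ with $c\in\kO$. Hence each $Z_i$ avoids it, and $\mathcal M$ contains no jump hierarchy on $L$. This is precisely the paper's route, which absorbs both cases into one measure argument rather than attempting any extraction from $H$.
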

\begin{proof}
Let $a^\ast$ be a computable pseudo-ordinal.  Then $a^\ast \in \mathcal M$.  We claim 
that $a^\ast$ has neither a descending sequence, nor a jump hierarchy, 
in $\mathcal M$.  If $\Delta^1_1(R_0)$ had one, where $R_0 = \bigoplus_{k<i} R^{[k]}$,
then by Theorem \ref{thm:nomoveomega1},
$\omega_1^{R_0} = \omega_1^{ck}$.  Thus there is an ordinal $b \in \kO$ 
such that $H^{R_0}_b$ computes either a jump hierarchy on or a descending sequence in 
$a^\ast$.  But recognizing a jump hierarchy or a descending sequence is 
arithmetic.  So
$$\text{}``H^X_b \text{ computes a jump hierarchy or descending sequence for } a^\ast\text{''}$$
is a $\Sigma^0_{b + O(1)}$ statement, and it has measure either 0 or 1 because 
it describes a property of the tail of $X$.  Because $R_0$ is sufficiently random, 
and satisfies the statement, the set has measure 1.  But then any $b+O(1)$-generic 
also satisfies the statement.  This is a contradiction because there are 
$b+O(1)$-generics in $HYP$, but $a^\ast$ has no hyperarithmetic descending 
sequence nor any hyperarithmetic jump hierarchy.
\end{proof}

\begin{proposition}\label{prop:lwca}
The model $\mathcal M$ satisfies $\LwCA$.  Furthermore, whenever 
$R_0 \in M$ and $\langle \phi_i \rangle \in \Delta^1_1(R_0)$, 
if $\langle \phi_i \rangle$ is completely determined in $\mathcal M$, 
then it is completely determined in $\Delta^1_1(R_0)$.
\end{proposition}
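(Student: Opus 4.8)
The plan is to prove both claims by a single analysis, inducting simultaneously on the complexity of the formulas $\phi_i$. For the first claim, $\LwCA$ in $\mathcal M$: given a sequence $\langle \phi_i\rangle \in \mathcal M$ of completely determined $L_{\omega_1,\omega}$-formulas, I need the set $\{i : \phi_i \text{ is true}\}$ to be in $\mathcal M$. Since $\mathcal M = \bigcup_i \Delta^1_1(\bigoplus_{k<i}R^{[k]})$, it suffices to show this set is $\Delta^1_1(R_0)$ for some $R_0 = \bigoplus_{k<i}R^{[k]} \in \mathcal M$. The key point is that $\langle \phi_i\rangle \in \Delta^1_1(R_0)$ for some such $R_0$, and $R_0$ is $\Pi^1_1$-random relative to itself only vacuously — what matters is that $R_1 = \bigoplus_{k<i+1}R^{[k]}$ is $\Pi^1_1$-random relative to $R_0$ and lies in $\mathcal M$, and $\omega_1^{R_1} = \omega_1^{ck}$ by Theorem \ref{thm:nomoveomega1}. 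So the second claim is the crux: if $\langle \phi_i\rangle \in \Delta^1_1(R_0)$ is completely determined in $\mathcal M$, then it is already completely determined in $\Delta^1_1(R_0)$, i.e., the evaluation maps (satisfaction assignments) exist hyperarithmetically relative to $R_0$.

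For the second claim, I would argue as follows. Suppose $\phi$ is a completely determined $L_{\omega_1,\omega}$-formula with $\phi \in \Delta^1_1(R_0)$, and suppose toward a contradiction — or rather, toward locating the obstruction — that $\phi$ has no satisfaction assignment in $\Delta^1_1(R_0)$. In $\mathcal M$ there is one, living in $\Delta^1_1(R_1)$ for some $R_1 = \bigoplus_{k<j}R^{[k]} \supseteq R_0$ in $\mathcal M$. The standard move (as in the $\CDPB$ analysis of \cite{ADMSW}): since $\phi$ is $\Delta^1_1(R_0)$ and completely determined, the tree of "potential counterexamples to ranking/well-foundedness compatible with a satisfaction assignment" is a $\Pi^1_1(R_0)$ tree with no hyperarithmetic-in-$R_0$ path giving a consistent assignment; but by $\Sigma^1_1$-bounding / Spector–Gandy considerations, if $\phi$ had a satisfaction assignment at all computable from $H^{R_0}_b$ for some $b$, it would be in $\Delta^1_1(R_0)$. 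The leverage is that $\omega_1^{R_1} = \omega_1^{ck} = \omega_1^{R_0}$, so any satisfaction assignment in $\Delta^1_1(R_1)$ is computed by $H^{R_1}_b$ for some $b \in \kO$ (not $\kO^{R_0}$ properly extending $\kO$), hence has a $\Delta^1_1(R_0)$ bound on its construction; combined with the fact that $\phi$ being completely determined means the satisfaction assignment is unique in $\ACA$, a back-and-forth absoluteness argument pushes it down to $\Delta^1_1(R_0)$. Concretely, I would write: "$n \in H^{R_0}_b$" type statements are $\Delta^1_1(R_0)$, the formula $\phi$ can be taken (by the computable procedure quoted in Section 2) to be a $b+O(1)$-ranked alternating formula, and one verifies by effective transfinite recursion along the rank that the satisfaction value at each node is $\Delta^1_1(R_0)$-computable, uniformly — using that $\omega_1^{R_1}=\omega_1^{ck}$ guarantees the recursion terminates at a genuine ordinal notation in $\kO$ rather than a pseudo-ordinal.

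The main obstacle I anticipate is making rigorous the step that a satisfaction assignment existing in $\Delta^1_1(R_1)$ forces the underlying well-founded part of $\phi$ to be ranked by a notation in $\kO$ (equivalently $\kO^{R_0}$, which coincides since $\omega_1^{R_0}=\omega_1^{ck}$), so that the recursion defining the assignment can be carried out hyperarithmetically in $R_0$. This is where Theorem \ref{thm:nomoveomega1} does the real work: without $\omega_1^{R_1}=\omega_1^{ck}$ one could imagine the assignment being "pseudo-well-founded" from the point of view of $R_0$, and no descent back to $\Delta^1_1(R_0)$ would be possible. I expect the bulk of the remaining work is bookkeeping: setting up the effective transfinite recursion uniformly in $b$ and in the index $i$, checking the uniformity needed so that $\{i : \phi_i \text{ true}\}$ comes out as a single $\Delta^1_1(R_0)$ set rather than a non-uniform family, and verifying that "completely determined in $\mathcal M$" for the sequence (rather than for individual $\phi_i$) still lands inside one level $\Delta^1_1(\bigoplus_{k<i}R^{[k]})$ of the model — which follows because $\mathcal M$ is a nested union and the sequence together with its satisfaction assignments is countable, hence coded by a single real in some level.
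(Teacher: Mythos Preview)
Your proposal has a genuine gap. You correctly identify that $\omega_1^{R_0\oplus R_1}=\omega_1^{ck}$ bounds the jump-level $a\in\kO$ at which the evaluation map for $\phi_j$ appears, but your plan to then ``push the assignment down to $\Delta^1_1(R_0)$ by effective transfinite recursion along the rank of $\phi$'' cannot work as stated. The formulas $\phi_j$ are only well-founded \emph{in the sense of $\mathcal M$}; since $\mathcal M$ is not a $\beta$-model, $\phi_j$ may be genuinely ill-founded. An ill-founded labeled tree can perfectly well carry an evaluation map (and indeed several inequivalent ones), so the existence of an evaluation map in $\Delta^1_1(R_0\oplus R_1)$ does not force $\phi_j$ to be ranked by any $b\in\kO$, and there is no ordinal along which to run your recursion. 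Your remark that ``the computable procedure quoted in Section~2'' lets one take $\phi$ to be $b{+}O(1)$-ranked conflates two different things: that procedure \emph{produces} ranked formulas expressing $n\in H_b$; it does not rank an arbitrary given $\phi$. Likewise, uniqueness of evaluation maps (which you invoke for your ``back-and-forth absoluteness'') is proved by arithmetic transfinite induction and is only available inside $\mathcal M$, not externally, so it cannot drive a $\Delta^1_1(R_0)$ definition of the map.

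The paper's proof bypasses all of this with a short measure-theoretic step you are missing. Having fixed $a\in\kO$ with the evaluation map computable from $H_a^{R_0\oplus R_1}$, one looks at
\[
C_j=\{X: H_a^{R_0\oplus X}\text{ computes an evaluation map for }\phi_j\}.
\]
This is a $\Delta^1_1(R_0)$ (indeed Borel of rank $\approx a$) set which contains the $\Pi^1_1(R_0)$-random $R_1$, hence has positive measure; since membership in $C_j$ depends only on the Turing degree of $R_0\oplus X$, the zero--one law gives $\lambda(C_j)=1$. Any $X$ that is sufficiently random relative to $R_0$ (randomness of level matching the Borel rank of $C_j$) therefore lies in $C_j$, and such $X$ exist in $\Delta^1_1(R_0)$. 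That yields an evaluation map in $\Delta^1_1(R_0)$ without ever needing $\phi_j$ to be truly well-founded.
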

\begin{proof}
Suppose that $\langle \phi_j \rangle \in \Delta^1_1(\bigoplus_{i<k} R^{[i]})$ is a sequence of 
formulas of $\LwCA$ which is completely determined in $\mathcal M$. 
Since $\LwCA$ is a theory of hyperarithmetic analysis, it suffices to 
show that the sequence is determined in $\Delta^1_1(R_0)$,
where $R_0 = \bigoplus_{i<k} R^{[i]}$.  Fixing $j$, there is an $m>k$ such that 
$\Delta^1_1(\bigoplus_{i<m} R^{[i]})$ contains an evaluation map for $\phi_j$.
Let $R_1 = \bigoplus_{k\leq i<m} R^{[i]}$.  By Van Lambalgen's Theorem
for $\Pi^1_1$-randoms, $R_0$ and $R_1$ are relatively $\Pi^1_1$-random. 
Since $\omega_1^{R_0\oplus R_1} = \omega_1^{ck}$, there is some $a \in \kO$
such that this evaluation map is computable from $H_a^{R_0\oplus R_1}$.  Then
$$C_j := \{ X : H_a^{R_0\oplus X} \text{ computes an evaluation map for } \phi_j\}$$
is a $\Delta^1_1(R_0)$ set which contains the $\Pi^1_1(R_0)$-random $R_1$.  
Therefore, $C_j$ has measure 1, so any sufficiently random element computes 
an evaluation map for $\phi_j$.  Here, sufficiently random just means more 
random (relative to $R_0$) than the descriptive complexity of $C_j$.  So there 
are elements of $\Delta^1_1(R_0)$ that are sufficiently random.  Thus $\phi_j$ 
is determined in $\Delta^1_1(R_0)$.
\end{proof}

To show that $\mathcal M$ models $\CDM$,
the following classical fact will be useful.  It says roughly 
that if you approximate a bounded 
function $f$ by using its average values on smaller and smaller partitions of the domain,
the resulting sequence converges to $f$ in the $L^1$ sense.
\begin{lemma}[$\WWKL$]\label{Lem2}
If $f \in L^1(2^\omega)$ is bounded
and $h_i = \sum_{p \in 2^i} (2^i \int_{[p]} f)\chi_{[p]}$, 
then $h_i \rightarrow f$ in the $L^1$ norm.  
\end{lemma}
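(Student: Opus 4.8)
The plan is to prove convergence by first establishing it for a convenient dense subclass — namely the continuous functions arising as finite-level conditional averages themselves — and then extending to an arbitrary bounded $f \in L^1(2^\omega)$ by an $L^1$ approximation argument, being careful that all estimates are available over $\WWKL$ rather than requiring $\ACA$. Concretely, I would first record the two elementary properties of the averaging operator $E_i(g) := \sum_{p \in 2^i}(2^i\int_{[p]} g)\chi_{[p]}$: it is linear, it is a contraction for the $L^1$ norm (since $\int_{[p]}|E_i(g)| \le \int_{[p]}|g|$ on each cylinder, so $\|E_i g\|_1 \le \|g\|_1$), and it preserves integrals ($\int E_i g = \int g$). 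These are finitary computations valid in $\RCA$, working with the ideal continuous functions representing $g$.

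Next I would observe that if $g$ is itself a step function constant on all cylinders $[q]$ with $q \in 2^m$, then for every $i \ge m$ we have $E_i(g) = g$ exactly, so $\|E_i g - g\|_1 = 0$ for $i \ge m$; trivially $E_i g \to g$. Then, given an arbitrary bounded $f \in L^1(2^\omega)$ with a name $\langle f^k\rangle$, fix $\varepsilon > 0$; since each $f^k$ is an ideal continuous function with a modulus of uniform continuity, there is a dyadic step function $g$ (constant on cylinders of some level $m$) with $\|f - g\|_1 < \varepsilon$ — one may take $g = E_m(f^k)$ for suitably large $k$ and $m$, using the contraction property and the triangle inequality $\|E_m(f^k) - f\|_1 \le \|E_m(f^k) - E_m(f)\|_1 + \|E_m(f) - f\|_1$, where the first term is at most $\|f^k - f\|_1$ by contraction; bounding the second term is exactly the claim for $g$ replaced by the continuous $f^k$ near $f$, so one should instead first approximate $f$ by $f^k$ in $L^1$, then note $f^k$ is approximated by its own averages $E_m(f^k) \to f^k$ uniformly (hence in $L^1$) by uniform continuity, and combine. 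For $i \ge m$, the contraction property gives $\|E_i f - f\|_1 \le \|E_i f - E_i g\|_1 + \|E_i g - g\|_1 + \|g - f\|_1 \le \|f - g\|_1 + 0 + \|g - f\|_1 < 2\varepsilon$. Since $\varepsilon$ was arbitrary, $E_i f \to f$ in $L^1$, which is the statement with $h_i = E_i f$.

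The main obstacle is bookkeeping about what "$\in L^1(2^\omega)$" and "bounded" mean in $\WWKL$ and making sure the intermediate functions $E_i f$ genuinely have $L^1$-names: one must verify that $E_i f$, defined as the cylinder-averages of $f$, is produced as a legitimate element of $L^1(2^\omega)$ uniformly in a name for $f$, and that the $L^1$ distances written above are meaningful (they are, by Theorem \ref{Thm2.8} and the fact that these are genuine norm computations on $C(X)$-representatives). A secondary subtlety is that I want the approximating step function $g$ to exist in $\RCA$ or $\WWKL$ from a name for $f$; this is where boundedness of $f$ is used, together with the fact that ideal continuous functions in $C(2^\omega)$ come equipped with moduli of uniform continuity, so their averages converge uniformly. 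Once these representational points are checked, the argument is just the classical three-$\varepsilon$ estimate built on linearity, contraction, and exactness of $E_i$ on dyadic step functions, all of which are finitary.
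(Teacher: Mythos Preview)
Your approach is correct and genuinely different from the paper's. The paper argues via a Lusin-type estimate: using Corollary~\ref{cor:extra} it finds a closed set $B$ of measure at least $1-\varepsilon/M$ on which the given name for $f$ converges uniformly (so $f$ is continuous there), chooses $i$ so large that $f$ varies by less than $\varepsilon$ on each level-$i$ cylinder within $B$, and then bounds $\int |h_i-f|$ by splitting each cylinder into its $B$-part (contributing $\varepsilon$) and its $(2^\omega\setminus B)$-part (contributing $O(\varepsilon)$ via the bound $M$). By contrast, your argument is the martingale/contraction route: $E_i$ is an $L^1$-contraction and is the identity on level-$m$ step functions for $i\ge m$, so after approximating $f$ in $L^1$ by a uniformly continuous $f^k$ and then by $E_m(f^k)$, a three-$\varepsilon$ estimate finishes.

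What each buys: the paper's argument stays close to the pointwise-convergence machinery of Section~3 and makes the role of boundedness explicit (it enters through the factor $M$ on the small-measure set). Your argument is more elementary and, as you half-notice, never actually needs boundedness of $f$: the contraction $\|E_i(f-g)\|_1\le\|f-g\|_1$ and the uniform continuity of the ideal $f^k$'s carry the whole estimate, so you have in effect proved the lemma without the boundedness hypothesis. Your route also does not invoke Corollary~\ref{cor:extra} and hence does not seem to need $\WWKL$ at all; all the estimates are finitary norm computations on names. The only thing to tidy up is the brief circularity in the middle of your write-up (where you momentarily appeal to $\|E_m f - f\|_1$ being small, which is the conclusion); the corrected version you sketch immediately afterward---approximate by $f^k$, then by $E_m(f^k)$ via uniform continuity, then apply contraction---is the right fix and should simply replace the circular sentence.
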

\begin{proof}
Given $\varepsilon$, use Corollary \ref{cor:extra} to find a closed set $B$ 
such that 
the restriction of $f$ to 
$B$ is continuous, and $\mu(2^\omega)-B < \varepsilon/M$, where $M$ is a 
bound on $f$.  Let $i$ be large enough that on $B$, if $x\uhr i = y\uhr i$, 
then $|f(x) - f(y)|<\varepsilon$.  Then for all strings $p\in2^i$ and all $x_0\in [p]$,
\begin{align*}
|h_i(x) - f(x)| &=  {\textstyle | (2^i\int_{[p]} f) - f(x_0)|}\\
&= {\textstyle |(2^i\int_{[p]\cap B} f) + (2^i\int_{[p]\setminus B} f) - 2^i \int_{[p]} f(x_0) |} \text{\ \ ($f(x_0)$ is a constant.)}\\
&{\textstyle \leq |2^i \int_{[p]\cap B} (f-f(x_0))| + |2^i \int_{[p]\setminus B} f| + |2^i \int_{[p]\setminus B}f(x_0)|}\\
&\leq {\textstyle 2^i \int_{[p]\cap B} \varepsilon + 2(2^i \int_{[p]\setminus B} M)}
\end{align*}
Therefore,
\begin{align*}
\int |h_i -f| &= \sum_{p \in 2^i} \int_{[p]} |h_i-f|\\
&\leq \sum_{p \in 2^i} \int_{[p]} 2^i({\textstyle \int_{[p\cap B]} \varepsilon + 2\int_{[p]\setminus B} M})\\
&\leq \sum_{p\in 2^i} ({\textstyle \int_{[p\cap B]} \varepsilon + 2\int_{[p]\setminus B} M})\\
&= {\textstyle \int_{B}\varepsilon + 2 \int_{2^\omega\setminus B} M \leq \varepsilon + 2 \varepsilon.}
\end{align*}
\end{proof}

\begin{lemma}\label{Lem4.2}
Suppose that $f \in L^1(2^\omega)$, with name $\langle f^i\rangle_{i<\omega}$. 
Suppose that $R$ is $\Delta^1_1$-random relative to $\langle f^i \rangle_{i<\omega}$.
Define a sequence of functions $g^i$ by
$$g^i(X) = \lim_{N\rightarrow \infty} \frac{1}{N}\sum_{j<N} f((X\uhr i)\concat R^{[j]})$$
Then the functions are well-defined and $g^i \rightarrow f$ in the $L^1$ norm.
\end{lemma}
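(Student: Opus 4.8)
The plan is to identify $g^i$, pointwise, with the dyadic-average step function $h_i=\sum_{p\in 2^i}(2^i\int_{[p]}f)\chi_{[p]}$ of Lemma~\ref{Lem2}, using Lemma~\ref{Lem1}, and then to upgrade Lemma~\ref{Lem2} from bounded $f$ to an arbitrary $f\in L^1(2^\omega)$.

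First I would fix $i$ and, for each string $p\in 2^i$, introduce the restricted-and-rescaled function $f_p(Y)=f(p\concat Y)$. Passing to the cylinder $[p]$ and rescaling by $2^i$ multiplies the $L^1$ norm by $2^i$, so shifting the name of $f$ by $i+1$ indices repairs rapid convergence: $\hat f_p^{\,m}:=(Y\mapsto f^{m+i+1}(p\concat Y))$ is a name for $f_p$ with $\|\hat f_p^{\,m}-\hat f_p^{\,m+1}\|_1<2^{-m}$. This name is uniformly computable from $\langle f^i\rangle$, so $R$ remains $\Delta^1_1$-random relative to it, and Lemma~\ref{Lem1} applied to $f_p$ gives that $\lim_N\frac1N\sum_{j<N}f(p\concat R^{[j]})$ exists and equals $\int_{2^\omega}f_p=2^i\int_{[p]}f$. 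Since $g^i(X)$ depends on $X$ only through $X\uhr i$, and there are only $2^i$ strings of length $i$, this shows that $g^i$ is everywhere well-defined and that $g^i=h_i$.

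It then remains to prove $h_i\to f$ in the $L^1$ norm. When $f$ is bounded this is exactly Lemma~\ref{Lem2}, so for general $f$ I would approximate $f$ by a term of its name. Given $\varepsilon$, choose $k$ with $\|f-f^k\|_1<2^{-k+1}<\varepsilon$. The ideal continuous function $f^k$ is bounded, since it carries a modulus of uniform continuity on the compact space $2^\omega$, so Lemma~\ref{Lem2} applies to it and its dyadic averages $h_i^{(k)}=\sum_{p\in 2^i}(2^i\int_{[p]}f^k)\chi_{[p]}$ converge to $f^k$ in $L^1$. Dyadic averaging is an $L^1$-contraction, since $\|h_i-h_i^{(k)}\|_1=\sum_{p\in 2^i}|\int_{[p]}(f-f^k)|\le\|f-f^k\|_1$, so the triangle inequality gives $\|h_i-f\|_1\le\|h_i-h_i^{(k)}\|_1+\|h_i^{(k)}-f^k\|_1+\|f^k-f\|_1<2\varepsilon+\|h_i^{(k)}-f^k\|_1$, where the middle term tends to $0$ as $i\to\infty$. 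Hence $\limsup_i\|h_i-f\|_1\le 2\varepsilon$ for every $\varepsilon$, so $g^i=h_i\to f$ in $L^1$.

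The hard part will be this last passage to a general $L^1$ function: Lemma~\ref{Lem2} is available only for bounded functions, so one must route through the continuous approximants furnished by the name and use that conditional expectation (dyadic averaging) does not increase the $L^1$ norm. A secondary, bookkeeping point is checking that $R$ stays $\Delta^1_1$-random relative to the reindexed name of each $f_p$, which is what licenses the appeal to Lemma~\ref{Lem1}.
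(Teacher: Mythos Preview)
Your approach is essentially the paper's: identify $g^i$ with the dyadic-average step function $h_i$ by applying Lemma~\ref{Lem1} to each $f_p$, then invoke Lemma~\ref{Lem2}. The paper's proof is literally just those two sentences and cites Lemma~\ref{Lem2} directly without addressing its boundedness hypothesis; your extra paragraph extending Lemma~\ref{Lem2} to an arbitrary $f\in L^1$ via approximation by the continuous $f^k$ and the $L^1$-contractivity of dyadic averaging is correct and fills a gap the paper leaves implicit (harmlessly so, since in the paper's only application $f=\sup_n f_{\sigma n}$ is bounded by~$1$).
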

\begin{proof}
By $2^i$-many applications of Lemma \ref{Lem1} to the functions 
$f_p(R):= f(p\concat R)$, and since    $2^i \int_{[p]} f = \int_{2^\omega} f_p$,
we have $g^i(X) = \sum_{p \in 2^i} (2^i \int_{[p]} f)\chi_{[p]}$.  Then $g^i\rightarrow f$ by Lemma 
\ref{Lem2}.
\end{proof}

\begin{theorem}
Over $\WWKL$, $\CDM$ is strictly weaker than $\ATR$.  In particular,
$\mathcal M$ satisfies $\WWKL+\CDM$ but not $\ATR$.
\end{theorem}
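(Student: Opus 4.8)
The plan is to combine what is already proved --- that $\mathcal M \not\models \ATR$ --- with a proof that $\mathcal M \models \CDM$; since $\ATR$ builds measure decompositions for every Borel code and hence proves $\CDM$ (via Proposition~\ref{Prop3.7} and the $\WWKL$-level equivalence of regularity-measurability with Definition~\ref{defn:measurable_set}), this yields strict weakness. First I would dispatch two bookkeeping points: $\mathcal M \models \ACA$, because every $\Delta^1_1(Z)$ is closed under Turing jump; and $\mathcal M \models \WWKL$, because for each $Z = \bigoplus_{k<i}R^{[k]} \in \mathcal M$, Van Lambalgen's theorem for $\Pi^1_1$-randoms makes $R^{[i]}$ a $\Pi^1_1(Z)$-random, hence a $\Delta^1_1(Z)$-random, lying in $\mathcal M$ --- which also supplies the randomness clause of the introduction's theorem. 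By the $\WWKL$-equivalence just mentioned, it then suffices to produce, for each $T \in \mathcal M$ that $\mathcal M$ regards as a completely determined Borel code, a measurable characteristic function for $|T|$ inside $\mathcal M$.

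Fix such a $T \in \Delta^1_1(R_0)$ with $R_0 = \bigoplus_{k<i_0}R^{[k]}$, and put $G := R^{[i_0]}$, a $\Pi^1_1(R_0)$-random lying in $\mathcal M$. Mirroring the category construction of \cite{ADMSW}, which polled $\Sigma^1_1$-generics, I would poll the columns $G^{[j]}$ of $G$: for each $\sigma \in T$ and string $p$, form the shifted code $p^{-1}T_\sigma$ for $\{Y : p\concat Y \in |T_\sigma|\}$ and set $c^\sigma_p := \lim_N \tfrac1N\sum_{j<N} e^\sigma_{p,j}$, where $e^\sigma_{p,j}\in\{0,1\}$ records whether the evaluation map for $G^{[j]}$ in $p^{-1}T_\sigma$ assigns $1$ to its root; such evaluation maps exist because $T$ is completely determined in $\mathcal M$ and $G^{[j]} \in \mathcal M$. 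Since $\mathcal M \models \ACA$, evaluation maps are unique when they exist, so ``$Y \in |p^{-1}T_\sigma|$'' is simultaneously $\Sigma^1_1$ and $\Pi^1_1$ over $T\oplus Y$, and the whole array of evaluation maps --- hence the family $\langle c^\sigma_p\rangle$ --- is uniformly $\Delta^1_1(T\oplus G)\subseteq\Delta^1_1(R_0\oplus G)\subseteq \mathcal M$. Letting $f_\sigma$ be the element of $L^1(2^\omega)$ named by a rapidly convergent subsequence (extracted via $\ACA$) of $\big\langle \sum_{p\in2^i} c^\sigma_p\chi_{[p]}\big\rangle_i$, the averaging Lemma~\ref{Lem2} together with Lemma~\ref{Lem4.2} identifies each $f_\sigma$ with the characteristic function of the set that $\mathcal M$ calls $|T_\sigma|$. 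I would then verify that $\langle f_\sigma\rangle_{\sigma\in T}$ obeys the $\sup$/$\inf$ clauses of a measure decomposition by arithmetic transfinite \emph{induction} on $T$ --- which is available already in $\ACA$, in contrast to the transfinite \emph{recursion} needed to build such a decomposition from scratch --- and finish by applying Proposition~\ref{Prop3.7} inside $\mathcal M$ to conclude that $|T|$ is measurable.

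The hard part, and the only place higher randomness really enters, is the apparent circularity in the definition of $c^\sigma_p$: morally $c^\sigma_p$ should be $\lambda(|p^{-1}T_\sigma|)$, yet measurability of this $\Sigma^1_1(R_0)$ set is exactly what we are trying to establish, and a naive approximation to it would seem to call for jump hierarchies along pseudo-ordinals --- precisely the resource $\mathcal M$ lacks. The way out is Stern's theorem (Theorem~\ref{thm:nomoveomega1}): every column of the $\Pi^1_1$-random $R$ has relative $\omega_1$ equal to $\omega_1^{ck}$, so any evaluation map for a $G^{[j]}$ in $p^{-1}T_\sigma$ that lies in $\mathcal M$ is witnessed below $\omega_1^{ck,R_0}$; hence, as seen by the reals actually polled, $|p^{-1}T_\sigma|$ coincides with the union over $\alpha<\omega_1^{ck,R_0}$ of the bounded-rank Borel sets $|(p^{-1}T_\sigma)_{<\alpha}| \in \Delta^1_1(R_0)$. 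A $\Pi^1_1(R_0)$-random avoids every $\Delta^1_1(Z)$-null set with $Z$ hyperarithmetic in $R_0$, so by Lemma~\ref{Lem1} relativized the columns of $G$ equidistribute with respect to each $|(p^{-1}T_\sigma)_{<\alpha}|$; squeezing $\liminf$ and $\limsup$ across the increasing union and its complement shows the limit defining $c^\sigma_p$ exists and equals the desired measure. Because this route never forms a jump hierarchy along a pseudo-ordinal, the whole construction stays inside $\mathcal M$, completing the verification that $\mathcal M \models \CDM$.
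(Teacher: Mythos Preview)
Your outline tracks the paper closely, but there is a genuine gap at the point you pass over most quickly: the verification that $\langle f_\sigma\rangle_{\sigma\in T}$ satisfies the $\sup$/$\inf$ clauses of a measure decomposition. You poll with a \emph{single} random $G=R^{[i_0]}$ at every node $\sigma$, whereas the paper deliberately allocates a \emph{fresh} block $R_0^{[\sigma]}=\bigoplus_j R_0^{[\sigma,j]}$ to each $\sigma$. This is not a cosmetic difference. In the induction step the paper needs to show that $e^\sigma_{p,j}=1$ iff $\sup_n f_{\sigma n}(p\concat R_0^{[\sigma,j]})=1$, and then invoke Lemma~\ref{Lem4.2} with $f=\sup_n f_{\sigma n}$. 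Both moves require $p\concat R_0^{[\sigma,j]}$ to avoid a rapidly null $G_\delta$ set manufactured from the already-built $\langle f_{\sigma n\tau}\rangle$; that set lives in $\Delta^1_1(R_0^{[<\sigma]})$, and $R_0^{[\sigma]}$ is $\Delta^1_1$-random relative to $R_0^{[<\sigma]}$, so avoidance holds. In your setup the analogous null set is constructed from the very same $G$ you are polling with, so $G^{[j]}$ has no reason to avoid it, and the hypothesis of Lemma~\ref{Lem4.2} (randomness relative to a name for $\sup_n f_{\sigma n}$) fails outright. Your sentence ``Lemma~\ref{Lem4.2} identifies each $f_\sigma$ with the characteristic function of $|T_\sigma|$'' is circular for the same reason: the lemma needs a pre-existing $L^1$ name for $\chi_{|T_\sigma|}$ relative to which $G$ is random, which is exactly what you are trying to produce.

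Your third paragraph tries to sidestep this by arguing directly that $c^\sigma_p$ equals a classically meaningful measure, but the objects ``$|(p^{-1}T_\sigma)_{<\alpha}|$'' are never defined, and there is no obvious way to define them when $T$ is (as it may well be in $\mathcal M$) genuinely ill-founded. Even if one interprets them as $\{Y:H_\alpha^{R_0\oplus Y}\text{ computes an evaluation map}\}$, the squeezing argument only pins down $c^\sigma_p$ up to the measure of the set of $Y$ with \emph{no} hyperarithmetic-in-$R_0\oplus Y$ evaluation map, and you have not argued that this set is null. More importantly, knowing each $c^\sigma_p$ individually still does not give the $L^1$ identity $f_\sigma=\sup_n f_{\sigma n}$ inside $\mathcal M$. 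The fix is exactly the paper's: split $R_0$ into columns $R_0^{[\sigma,j]}$ indexed by nodes of $T$, so that at each $\sigma$ you poll with randoms that are $\Delta^1_1$-random relative to everything built at strictly lower nodes.
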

\begin{proof}
Suppose that we are given $T$, a completely determined Borel code. 
To simplify notation, we assume that $T \in \Delta^1_1$; the result for 
arbitrary $T \in \mathcal M$ follows by relativization.  Let $R_0 = R^{[0]}$.  Then
abusing the column notation further, consider $R_0$ as being 
made out of infinitely many distinct and computably identifiable columns,
one column for each pair $(\sigma, j)$, where $\sigma \in \omega^{<\omega}, j \in \omega$, and let $R^{[\sigma,j]}$ denote the column
allocated to that pair.  Then letting
$$U:= \{(p,\sigma,j) : p\concat R_0^{[\sigma,j]} \in |T_\sigma|\}$$
we have $U \in \Delta^1_1(R_0)$ by Proposition \ref{prop:lwca}.  
By the same reasoning, we also have that 
$U_\sigma := \{(p,j) : (\sigma, p,j) \in U\}$ satisfies $U_\sigma \in \Delta^1_1(R_0^{[\sigma]})$,
where $R_0^{[\sigma]} = \bigoplus_{j<\omega} R_0^{[\sigma,j]}$.

Therefore, in $\Delta^1_1(R_0)$ we can also find the array of functions
$\langle f_\sigma^i\rangle_{\sigma \in T,i\in\omega}$ defined as follows.  
$$f_\sigma^i(X) := \limsup_{N\rightarrow\infty} \frac{1}{N} \sum_{j<N} U_\sigma(X\uhr i,j)$$
Then define $f_\sigma = \limsup_i f_\sigma^i$.
Since the functions $X\mapsto U_\sigma(X\uhr i, j)$ are continuous 
and bounded above by 1, $f_\sigma \in L^1(2^\omega)$ by the monotone 
convergence theorem.
Of course, the intention is to show that all limsups above can be replaced by 
limits a.e., and that $f_\sigma$
represents $|T_\sigma|$ as a measurable set.  We prove that $\langle f_\sigma\rangle$ 
is a measure decomposition 
by arithmetic transfinite induction within $\mathcal M$.

If $\sigma$ is a leaf then the sequence of functions $f_\sigma^i$ is eventually 
constant and equal to the characteristic function of the clopen set coded by
$T_\sigma$, as desired.

So to complete the proof that $\langle f_\sigma \rangle_{\sigma \in T}$ 
is measure decomposition, it suffices to show that $\mathcal M$ models
the following statement for each non-leaf $\sigma \in T$:

``If for all $n$, $\langle f_{\sigma n  \tau}\rangle_{\tau \in T_{\sigma n}}$ is a 
measure decomposition, then $\langle f_{\sigma\tau}\rangle_{\tau \in T_\sigma}$ 
is a measure decomposition.''
That is, assuming $\mathcal M$ models
the hypothesis, we need to show that $\mathcal M$ models:
\begin{enumerate}
\item If $\sigma$ is a union, then $f_\sigma = \sup_n f_{\sigma n}$
\item If $\sigma$ is an intersection, then $f_\sigma = \inf_n f_{\sigma n}$
\end{enumerate}
We show the union case; the intersection case is completely symmetric.
By Proposition \ref{Prop3.7}, for each $n$, there is a rapidly null $G_\delta$ 
set such that on its complement, $f_{\sigma n}$ is the characteristic function  
of $|T_{\sigma n}|$.  Inspecting the proof of Proposition \ref{Prop3.7}, we 
see that the rapidly null $G_\delta$ sets guaranteed there have a uniform
$\Delta^0_1$ definition relative to the data $\langle f_{\sigma n \tau}: \tau \in T_{\sigma n}, n \in \omega\rangle$.  Let $A$ denote the rapidly null $G_\delta$ set 
obtained by combining these infinitely many tests into a single test.  Define
$$R_0^{[<\sigma]} = \bigoplus_{\substack{\tau \in T_{\sigma n} \\ n \in \omega}} R_0^{[\sigma n\tau]}.$$
Since 
$$A\leq_T \langle f_{\sigma n \tau} : \tau \in T_{\sigma n}, n \in \omega\rangle \leq_T \bigoplus_{\substack{\tau \in T_{\sigma n}\\ n \in \omega}} U_{\sigma n \tau} \in \Delta^1_1 (R^{[<\sigma]})$$
and $R^{[\sigma]}$ is $\Delta^1_1$-random relative to $R^{[<\sigma]}$, each column $R_0^{[\sigma, j]}$
avoids $A$.  Therefore, for each $p \in 2^{<\omega}$ and each $j$ and $n$, we have
$$p\concat R_0^{[\sigma,j]} \in |T_{\sigma n}|  \iff f_{\sigma n}(p \concat R_0^{[\sigma,j]}) = 1.$$
Therefore,
\begin{align*}
(p,j) \in U_\sigma &\iff p\concat R_0^{[\sigma,j]} \in |T_\sigma|\\
&\iff \exists n p\concat R_0^{[\sigma,j]} \in |T_{\sigma n}| \\
& \iff \exists n f_{\sigma n}(p \concat R_0^{[\sigma,j]}) = 1\\
&\iff \sup_n f_{\sigma n}(p \concat R_0^{[\sigma,j]})=1. \end{align*}
Here $\sup_n f_{\sigma n}$ has a canonical $L^1$ name arithmetic in 
$\langle f_{\sigma n} : n \in \omega\rangle$, and the last bi-implication 
is justified by Proposition \ref{Prop2.13}, since $p\concat R_0^{[\sigma,j]}$ 
also avoids the rapidly null $G_\delta$ guaranteed there.
 Thus by Lemma \ref{Lem4.2}, 
$$\lim_{N\rightarrow\infty} \frac{1}{N} \sum_{j<N} \sup_n f_{\sigma n}(p\concat R_0^{[\sigma,j]})$$
exists for all $\sigma$, and $\langle f_\sigma^i\rangle_{i\in \omega}$ is
actually a name for $\sup_n f_{\sigma n}$.  Therefore, by Proposition 
\ref{Prop2.13} and Corollary \ref{Cor2.12},
for almost all $x$ we have $f_\sigma(x) = \lim_i f_\sigma^i(x) = \sup_n f_{\sigma n}(x)$.  Theorem \ref{Thm2.8} then implies that $f_\sigma = \sup_n f_{\sigma n}$,
which is what we wanted to prove.
\end{proof}

\section{$\omega$-models of $\CDM$ are closed under $\Delta^1_1$-randoms}

In this section we show that any $\omega$-model $\mathcal M$
of $\CDM$ must
be closed under $\Delta^1_1$-randoms, in the sense that for every 
$Z\in \mathcal M$, there is an $R \in \mathcal M$ that is 
$\Delta^1_1$-random relative to $Z$.
We first review the machinery of decorating trees from \cite{ADMSW}.
All results summarized here relativize and 
they will be used in a relativized form, but we state them in 
unrelativized form to reduce clutter.

The purpose of the operation $\Decorate$ is to take a code for a
Borel set which may
not be completely determined, and force it to become 
determined for some ``small'' set of 
inputs, while not changing its membership 
facts for other inputs.  In our case ``small" will mean measure 0.
Roughly speaking,
we are going to make a code $T$ and add decorations to ensure that
all non $\Delta^1_1$-randoms are determined in $T$.  We will
also make sure any 
measure decomposition is complicated enough to compute 
a $\Delta^1_1$-random.  That way, if there 
are no $\Delta^1_1$-randoms then the tree 
is completely determined, at which point the existence of a 
computationally powerful
measure decomposition leads to a contradiction.

\begin{definition}[\cite{ADMSW}]
A \emph{nice decoration generator} is 
a partial computable function which 
maps any $b\in \kO^\ast$ to 
alternating, $b$-ranked 
trees $(P_b,N_b)$, where each 
$P_b$ and $N_b$ have
an intersection or a leaf at their root.  
\end{definition}

For example (and this is what we will use), there is a finite number $k$ 
such that the following almost defines a nice decoration generator.
\begin{align*}
P_{b+k} &= \{X : \text{$X$ is not $MLR^{H_b}$, but for all $c<_\ast b$, $X$ is $MLR^{H_c}$}\} \cap \{X : X <_{\text{lex}} H_b\}\\
N_{b+k} &= \{X : \text{$X$ is not $MLR^{H_b}$, but for all $c<_\ast b$, $X$ is $MLR^{H_c}$}\} \cap \{X : X \geq_{\text{lex}} H_b\}
\end{align*}
All that remains is to define $P_b$ and $N_b$ when $b$ is within $k$ 
successors of a limit ordinal; in that case we set both $P_b$ and $N_b$ 
to be $b$-ranked alternating codes for the empty set.

The operation $\Decorate$ is defined below 
using effective transfinite recursion (with parameter 
$<_\ast$ which is computable from $\emptyset'$), 
and therefore is well-defined 
on $a$-ranked trees $T$ for all $a \in \kO^{\ast,T}$.

\begin{definition}[\cite{ADMSW}] The operation $\Decorate$ is defined as 
follows.  The
inputs are an $a$-ranked labeled tree $T$
and a nice decoration generator $h$.  
\begin{align*}
\Decorate(T, h) = \{\lambda\} &\cup 
\bigcup_{\langle n \rangle \in T} 
\langle 2n \rangle\concat \Decorate(T_{\langle n \rangle}, h)\\
&\cup \bigcup_{b <_\ast \rho_T(\lambda)}
\langle 2b+1\rangle\concat\Decorate(Q_b,h)
\end{align*}
where $Q_b = P_b$ if $\lambda$ is a $\bigcup$ in $T$, and 
$Q_b = N_b^c$ if $\lambda$ is a $\bigcap$ in $T$.

The rank and label of $\lambda$ in $\Decorate(T,h)$ 
are defined to coincide with the rank and label of 
$\lambda$ in $T$.  The ranks and labels of the other nodes in 
$\Decorate(T,h)$ are inherited from $\Decorate(T_{\langle n\rangle},h)$
or $\Decorate(Q_b,h)$ as appropriate.
\end{definition}

If $T$ is $a$-ranked, so is 
$\Decorate(T,h)$.
Similarly, if $T$ and each $P_b$ and $N_b$ are 
alternating, 
then $\Decorate(T,h)$ will also be 
alternating.  (Note that in this case, $N_b^c$ 
has a union at its root).

\begin{lemma}[\cite{ADMSW}]\label{lem:kO_induction}
Let $h$ be a nice decoration generator. 
Suppose $b \in \kO$, and suppose that 
$X \not\in \makeset{P_d} \cup \makeset{N_d}$
for any $d <_\ast b$.  Then for any $b$-ranked tree $T$,
$X \in \makeset{\Decorate(T,h)}$ if and only if $X \in 
\makeset{T}$.
\end{lemma}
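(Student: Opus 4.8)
The goal is to prove Lemma \ref{lem:kO_induction}: for a nice decoration generator $h$, a genuine ordinal $b \in \kO$, and a real $X$ that lies outside $\makeset{P_d}$ and $\makeset{N_d}$ for every $d <_\ast b$, a $b$-ranked tree $T$ satisfies $X \in \makeset{\Decorate(T,h)}$ iff $X \in \makeset{T}$. Since $b \in \kO$, every $d <_\ast b$ is a genuine ordinal notation and the relation $<_\ast$ below $b$ is well-founded, so one can argue by (arithmetic) transfinite induction on the rank $b$ of $T$. First I would set up the induction hypothesis: for every $d <_\ast b$ and every $d$-ranked tree $S$ (built from $h$ or otherwise), if $X \notin \makeset{P_c} \cup \makeset{N_c}$ for all $c <_\ast d$, then $X \in \makeset{\Decorate(S,h)}$ iff $X \in \makeset{S}$. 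Note the hypothesis on $X$ is inherited downward: if the bad sets are avoided below $b$, they are avoided below any $d <_\ast b$.

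\emph{Base case and the leaf case.} If $\lambda$ is a leaf of $T$, then by the definition of $\Decorate$ the root of $\Decorate(T,h)$ is still that same leaf with the same clopen label, and the only extra material added are the branches $\langle 2d+1 \rangle \concat \Decorate(Q_d,h)$ — but a leaf has no ranks $d <_\ast \rho_T(\lambda)=1$ strictly below it to decorate with (the only thing $<_\ast$-below $1$ is nothing), so in fact no decorations are added at a leaf and $\Decorate(T,h) = T$ there. Hence membership is literally unchanged.

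\emph{Inductive step.} Suppose $\lambda$ is a $\cup$-node in $T$ (the $\cap$-case is dual, using complements, and I will remark on it). By definition
\[
\Decorate(T,h) = \{\lambda\} \cup \bigcup_{\langle n\rangle \in T} \langle 2n\rangle \concat \Decorate(T_{\langle n\rangle},h) \cup \bigcup_{b' <_\ast \rho_T(\lambda)} \langle 2b'+1\rangle \concat \Decorate(P_{b'},h),
\]
and $\lambda$ is still a $\cup$-node. So $X \in \makeset{\Decorate(T,h)}$ iff $X \in \makeset{\Decorate(T_{\langle n\rangle},h)}$ for some $n$, or $X \in \makeset{\Decorate(P_{b'},h)}$ for some $b' <_\ast \rho_T(\lambda)$. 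Now each $T_{\langle n\rangle}$ is $d$-ranked for some $d <_\ast \rho_T(\lambda) = b$, and likewise $P_{b'}$ is $b'$-ranked with $b' <_\ast b$; since $X$ avoids all the relevant $\makeset{P_c}, \makeset{N_c}$, the induction hypothesis applies to each, giving $X \in \makeset{\Decorate(T_{\langle n\rangle},h)} \iff X \in \makeset{T_{\langle n\rangle}}$ and $X \in \makeset{\Decorate(P_{b'},h)} \iff X \in \makeset{P_{b'}}$. But $X \notin \makeset{P_{b'}}$ for every $b' <_\ast b$ by hypothesis, so the second family of disjuncts is all false, and the first family collapses to: $X \in \makeset{T_{\langle n\rangle}}$ for some $n$, which is exactly $X \in \makeset{T}$ since $\lambda$ is a $\cup$-node in $T$. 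The $\cap$-case is symmetric: $\lambda$ is a $\cap$-node in both trees, the decoration branches use $N_{b'}^c$, and $X \in \makeset{N_{b'}^c}$ iff $X \notin \makeset{N_{b'}}$, which holds for all $b' <_\ast b$ by hypothesis, so the extra conjuncts are all \emph{true} and hence do not affect the conjunction; the induction hypothesis handles the $T_{\langle n\rangle}$ branches as before.

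\emph{Main obstacle.} The only real subtlety is bookkeeping about ranks and the transfinite induction being arithmetic: one must check that $\Decorate$ is genuinely defined by effective transfinite recursion along $<_\ast$ restricted below $b \in \kO$ (so the recursion is well-founded), that the rank of each immediate subtree of $\Decorate(T,h)$ is a genuine $<_\ast$-predecessor of $b$, and that the evaluation-map semantics of $\makeset{\cdot}$ commutes with the decomposition of $\Decorate(T,h)$ at its root — i.e. that an evaluation map for $X$ on $\Decorate(T,h)$ restricts to evaluation maps on each immediate subtree, and conversely these glue. Since evaluation maps are unique in $\ACA$ and the root label and children structure of $\Decorate(T,h)$ are explicitly given, this gluing is routine; the proof is essentially a matter of unwinding the definition of $\Decorate$ and matching disjuncts/conjuncts with the hypothesis on $X$, with the transfinite induction carried out in $\ACA$.
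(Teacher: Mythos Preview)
The paper does not include its own proof of this lemma; it is quoted from \cite{ADMSW} and left unproved here. Your argument by transfinite induction on $b\in\kO$ is correct and is exactly the intended proof: unwind the definition of $\Decorate$ at the root, apply the induction hypothesis to each child (both the genuine children $\Decorate(T_{\langle n\rangle},h)$ and the decoration children $\Decorate(Q_{b'},h)$, all of which are ranked by some $d<_\ast b$), and then use the hypothesis $X\notin\makeset{P_{b'}}\cup\makeset{N_{b'}}$ to see that the decoration branches contribute nothing---false disjuncts in the $\cup$ case, true conjuncts in the $\cap$ case. Your handling of the leaf case (no $d<_\ast 1$, hence no decorations) and the downward inheritance of the hypothesis on $X$ are both right.
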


\begin{lemma}[\cite{ADMSW}]\label{lem:2} Let $a \in \kO^\ast$ and $b \in \kO$ 
with $b<_\ast a$.
Let $T$ be an alternating, $a$-ranked tree and 
let $h$ be a nice decoration generator.
Suppose $X \in \makeset{P_b} \cup \makeset{N_b}$.  Then
\begin{enumerate}
\item $X$ has a unique evaluation
map in $\Decorate(T,h)$.
\item This evaluation map is $H_{b + O(1)}^{X\oplus T}$-computable.
\end{enumerate}
\end{lemma}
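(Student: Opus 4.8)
The plan is to determine the evaluation map of $X$ in $\Decorate(T,h)$ node by node, reading off the forced values from the decorations and invoking Lemma \ref{lem:kO_induction} on the genuinely well-founded pieces. Assume without loss of generality that $X \in \makeset{P_b}$; the case where only $X\in\makeset{N_b}$ holds is dual, interchanging $\cup$ with $\cap$, $P$ with $N$, and $0$ with $1$ throughout, and if both hold the argument only simplifies (every decoration then forces its node). Since $X$ is caught at level exactly $b$, we have $X \notin \makeset{P_d} \cup \makeset{N_d}$ for every $d <_\ast b$, so Lemma \ref{lem:kO_induction} applies to every $d$-ranked tree with $d \leq_\ast b$. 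In particular $P_b$ is $b$-ranked, $N_b^c$ is $b$-ranked (it is the De Morgan dual of $N_b$, with a $\cup$ at its root), and $\makeset{N_b^c} = 2^\omega\setminus\makeset{N_b}$; so Lemma \ref{lem:kO_induction} gives $X\in\makeset{\Decorate(P_b,h)}$, and $X\in\makeset{\Decorate(N_b^c,h)}$ if and only if $X\notin\makeset{N_b}$. Each of these decorated trees is well-founded, hence has a unique evaluation map.

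Next I would define a candidate map $f$ on $\Decorate(T,h)$ directly rather than by recursion along the tree, since $\Decorate(T,h)$ may be ill-founded when $a$ is a pseudo-ordinal. Every node $\sigma$ is the root of a decorated subtree $\Decorate(S,h)$, whose rank $c$ and label match those of $\sigma$. If $c \leq_\ast b$ then $c\in\kO$, so $S$ is genuinely well-founded; let $f$ on this subtree be its unique classical evaluation map. If $c >_\ast b$ and $\sigma$ is a $\cup$, set $f(\sigma)=1$. If $c >_\ast b$ and $\sigma$ is a $\cap$, set $f(\sigma)=0$ when $X\in\makeset{N_b}$, and otherwise set $f(\sigma)=1$ precisely when every leaf child of $\sigma$, every $\cup$-child of $\sigma$ of rank $\leq_\ast b$, and every decoration child of the form $\Decorate(N_d^c,h)$ with $d <_\ast b$ is assigned value $1$ by the already-defined parts of $f$. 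This is a well-defined total function, with no circularity: the first clause refers only to well-founded subtrees, the second is constant, and the third refers only to nodes governed by the first two clauses. One then verifies $f$ is an evaluation map. At leaves and at nodes of rank $\leq_\ast b$ this is immediate. At a $\cup$ node of rank $>_\ast b$ the value $1$ is witnessed by its decoration child $\Decorate(P_b,h)$, whose root value is $1$ by the first paragraph. At a $\cap$ node of rank $>_\ast b$: if $X\in\makeset{N_b}$ then its decoration child $\Decorate(N_b^c,h)$ has root value $0$, so $f(\sigma)=0$ is correct; otherwise that child has value $1$, and moreover every $\cup$-child of rank $>_\ast b$ and every decoration child $\Decorate(N_d^c,h)$ with $d\geq_\ast b$ has value $1$ (its root is a $\cup$ of rank $\geq_\ast b$, handled directly when $d>_\ast b$ and by the first paragraph when $d=b$), so ``every child has value $1$'' is exactly the condition defining $f(\sigma)$.

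For uniqueness, let $g$ be any evaluation map for $X$ in $\Decorate(T,h)$. On any well-founded subtree $g$ must agree with the classical evaluation map, hence with $f$; in particular every $\cup$ node of rank $>_\ast b$ has a child $\Decorate(P_b,h)$ with $g$-value $1$, so $g=1=f$ there, and (when $X\in\makeset{N_b}$) every $\cap$ node of rank $>_\ast b$ has a child $\Decorate(N_b^c,h)$ with $g$-value $0$, so $g=0=f$ there. In the remaining case, every $\cap$ node of rank $>_\ast b$ has $g$-value $1$ iff all of its children — each of which is a $\cup$ node of rank $>_\ast b$ or the root of a well-founded subtree — have $g$-value $1$, which is the same condition as for $f$. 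Hence $g=f$. For the complexity bound, each ingredient is uniformly computable from $H_{b+O(1)}^{X\oplus T}$: the relation $<_\ast$, the ranks and labels of $\Decorate(T,h)$, and the trees $P_d,N_d^c$ are all computable from $T\oplus\emptyset'$; the classical evaluation map of a well-founded $c$-ranked tree with $c\leq_\ast b$ is computable from $H_{c+O(1)}^{X\oplus T}\leq_T H_{b+O(1)}^{X\oplus T}$ by the standard effective transfinite recursion along the rank; membership $X\in\makeset{N_b}$ is decidable from $H_{b+O(1)}^{X}$; and the $\cap$-node clause is a $\Pi^0_1$ condition over children whose individual values are uniformly $H_{b+O(1)}^{X\oplus T}$-computable, costing at most one further jump, which the $O(1)$ absorbs.

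I expect the main obstacle to be the $\cap$ nodes of rank $>_\ast b$ in the subcase $X\notin\makeset{N_b}$ (dually the $\cup$ nodes when only $X\in\makeset{N_b}$): these are not forced to a single value by any single child the way the other nodes are, so one must argue that their value is nonetheless completely pinned down — by the leaf children and the low-rank $\cup$-children buried arbitrarily deep in the subtree — and, with more care, that this determination is uniform enough to stay at the level $H_{b+O(1)}^{X\oplus T}$. A related point needing attention is that $a$, and the intermediate ranks $d$ with $b <_\ast d <_\ast a$, may be pseudo-ordinals, so no transfinite induction along $\Decorate(T,h)$ itself is available; the whole argument must be organized around the genuinely well-founded pieces — the rank-$\leq_\ast b$ subtrees and the decoration trees $P_b$ and $N_b^c$ — where Lemma \ref{lem:kO_induction} and classical recursion do all the real work.
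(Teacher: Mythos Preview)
The paper does not supply its own proof of this lemma; it is quoted from \cite{ADMSW} and used as a black box. Your argument is essentially the one given there: split the nodes of $\Decorate(T,h)$ according to whether their rank is $\leq_\ast b$ (so the subtree is genuinely well-founded and Lemma~\ref{lem:kO_induction} plus effective transfinite recursion along the rank gives the unique evaluation map) or $>_\ast b$ (where the alternating structure forces every such node to have a decoration child $\Decorate(P_b,h)$ or $\Decorate(N_b^c,h)$, whose root value pins down the parent). Your handling of the one non-forced case --- a $\cap$ node of high rank when $X\notin\makeset{N_b}$ --- is exactly right: all non-leaf children are $\cup$ nodes, and those of high rank are already forced to $1$, so the value reduces to a $\Pi^0_1$ check over the low-rank children.

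One small point to tidy: you write ``since $X$ is caught at level exactly $b$, we have $X\notin\makeset{P_d}\cup\makeset{N_d}$ for every $d<_\ast b$,'' but the hypothesis of the lemma does not assert that $b$ is $<_\ast$-minimal with $X\in\makeset{P_b}\cup\makeset{N_b}$, and a general nice decoration generator carries no disjointness guarantee across levels. The fix is immediate: since $b\in\kO$, the set $\{d:d\leq_\ast b\}$ is a genuine well-order, so a least such $b_0$ exists; run your argument with $b_0$ in place of $b$ and note $H_{b_0+O(1)}^{X\oplus T}\leq_T H_{b+O(1)}^{X\oplus T}$. With that adjustment the proof is correct.
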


\begin{theorem}
Suppose that $\mathcal M$ is an $\omega$-model of $\WWKL+\CDM$. 
Then for any $Z \in \mathcal M$, there is an $R \in \mathcal M$ such that $R$ is 
$\Delta^1_1$-random relative to $Z$.
\end{theorem}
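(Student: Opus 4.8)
The plan is to argue by contradiction. Relativizing, it suffices to treat $Z=\emptyset$, so suppose $\mathcal{M}\models\WWKL+\CDM$ yet $\mathcal{M}$ contains no $\Delta^1_1$-random. I will build a Borel code $T\in\mathcal{M}$ which is completely determined in $\mathcal{M}$ but whose measure decomposition — which $\mathcal{M}$ must possess since $\CDM$ holds — computes a $\Delta^1_1$-random, the desired contradiction.

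First I would strengthen the ambient theory inside $\mathcal{M}$. A Borel code $\bigcup_i[p_i]$ is completely determined already in $\RCA$ (its evaluation map is read directly off the input), so $\CDM$ makes every open subset of $2^\omega$ regularity-measurable, hence measurable in the sense of Definition \ref{defn:measurable_set}; by Proposition \ref{joinACA}, $\mathcal{M}\models\ACA$. By Proposition \ref{prop:cdm-implies-lwca}, $\mathcal{M}\models\LwCA$, and since $\LwCA$ is a theory of hyperarithmetic analysis, $\mathcal{M}$ is closed under hyperarithmetic reducibility; in particular $HYP\subseteq\mathcal{M}$, so $\mathcal{M}$ contains non-$\Delta^1_1$-random reals caught at cofinally many levels below $\omega_1^{ck}$. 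Finally, over $\ACA$, $\CDM$ supplies a measure decomposition for every completely determined Borel code, and by Proposition \ref{Prop3.7} such a decomposition computes the characteristic function of the coded set almost everywhere.

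Next I would follow the decoration template of \cite{ADMSW}. Fix a computable pseudo-ordinal $a^\ast$ chosen (as in \cite{ADMSW}) so that $b<_\ast a^\ast$ for every $b\in\kO$ — this is possible because a computable ill-founded order can have well-founded part of rank $\omega_1^{ck}$. Fix the nice decoration generator $h$ of the previous section, the one built from the notions $MLR^{H_b}$, so that $\bigcup_{b\in\kO}(\makeset{P_b}\cup\makeset{N_b})$ is exactly the set of non-$\Delta^1_1$-randoms. Let $T=\Decorate(T_0,h)$ for a suitable computable, alternating, $a^\ast$-ranked code $T_0$: away from the decorations $T_0$ poses the pseudo-recursion of length $a^\ast$ that no hyperarithmetic evaluation can carry out, with its clopen leaves laid out so that the integrals $\int_{[p]}f_\sigma$ of a measure decomposition of $T$ spell out, along the ill-founded part of $a^\ast$, a real dodging every $H_b$-Martin-L\"of test. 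Since $T_0$ and $h$ are computable, $T\in\mathcal{M}$. For complete determination: any $X\in\mathcal{M}$ is non-$\Delta^1_1$-random, so $X\in\makeset{P_b}\cup\makeset{N_b}$ for some $b\in\kO$ with $b<_\ast a^\ast$, and Lemma \ref{lem:2} provides a unique evaluation map for $X$ in $T$ computable from $H_{b+O(1)}^{X}$, which lies in $\mathcal{M}$ by hyperarithmetic closure.

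The heart of the argument, and the step I expect to be the main obstacle, is to verify that the measure decomposition $\langle f_\sigma\rangle_{\sigma\in T}\in\mathcal{M}$ then furnished by $\CDM$ genuinely computes a $\Delta^1_1$-random. This is the measure-theoretic counterpart of the fact from \cite{ADMSW} that a Baire approximation to the analogous decorated code computes a $\Delta^1_1$-generic, with $\Pi^1_1$-randoms playing the role of $\Sigma^1_1$-generics and with the averaging machinery of Lemmas \ref{Lem1}, \ref{Lem4.2}, and \ref{Lem2} replacing the genericity argument. Because the $\Delta^1_1$-randoms have measure one (each set $MLR^{H_b}$ has measure one and $\kO$ is countable), on a measure-one set of inputs the decorations contribute nothing, so (by the relativized, transfinite form of Lemma \ref{lem:kO_induction}) $f_\sigma$ is pinned a.e. to the evaluation of $T_{0,\sigma}$ for every $\sigma$; thus $\langle f_\sigma\rangle$ is, up to a null set, a genuine evaluation of the ill-founded code $T_0$, which no hyperarithmetic object can be. The delicate point is to design the leaves and ranks of $T_0$ so that this non-hyperarithmetic decomposition not merely fails to be hyperarithmetic but visibly computes a $\Delta^1_1$-random — as opposed to something like a pseudo-jump hierarchy, which is non-hyperarithmetic but highly non-random. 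With that in hand, $\mathcal{M}$ contains a $\Delta^1_1$-random, contradicting the hypothesis.
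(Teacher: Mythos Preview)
Your overall scaffolding matches the paper's: decorate an $a^\ast$-ranked tree with the $MLR^{H_b}$-based generator, argue that in the absence of $\Delta^1_1$-randoms every $X\in\mathcal M$ lands in some $\makeset{P_b}\cup\makeset{N_b}$ and hence has an evaluation map, and then exploit the measure decomposition. But the step you yourself flag as ``the main obstacle'' is in fact where the paper's argument diverges from yours, and your proposed route through it does not work as stated.

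You try to engineer $T_0$ so that the integrals $\int_{[p]} f_\sigma$ along the ill-founded part of $a^\ast$ directly spell out a $\Delta^1_1$-random. This is not how the paper proceeds, and it is unclear how it could be made to work: nothing about an $a^\ast$-ranked pseudo-recursion forces the resulting measurable functions to encode a random rather than, say, a pseudo-jump hierarchy (exactly the worry you voice). The paper's key observation is that \emph{no special design of $T_0$ is needed at all}. The decoration $P_{b+k}$ already has the set $\{X : X <_{\text{lex}} H_b\}$ as one of its level-one conjuncts. Hence $\Decorate(\{X : X <_{\text{lex}} H_b\},h)$ appears as a level-two subtree of $\Decorate(T,h)$, so the measure decomposition supplies an $f\in L^1$ equal a.e.\ to its characteristic function. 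By Lemma~\ref{lem:kO_induction} the decorations are invisible on the measure-one set of $MLR^{H_b}$ reals, so $f$ is a.e.\ the characteristic function of $\{X : X <_{\text{lex}} H_b\}$, and therefore $\int f = H_b$ read as a binary real in $[0,1]$. Thus the measure decomposition uniformly computes $H_b$ for every $b\in\kO$; now $\WWKL$ supplies an element of $\mathcal M$ that is $1$-random relative to the decomposition, and any such element is automatically $\Delta^1_1$-random. So the target is not ``the decomposition computes a random'' but ``the decomposition computes every $H_b$''.

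A secondary point: you cannot just fix an arbitrary computable pseudo-ordinal $a^\ast$. For $\Decorate(T_0,h)$ to be a Borel code \emph{in $\mathcal M$}, the model must believe $a^\ast$ is well-founded. The paper handles this by first disposing of the $\beta$-model case (where the result is immediate) and then, in the non-$\beta$-model case, producing from a tree $S\in\mathcal M$ that $\mathcal M$ wrongly believes well-founded a pseudo-ordinal $a^\ast$ that $\mathcal M$ likewise believes is an ordinal. Under your contradiction hypothesis $\mathcal M$ is indeed not a $\beta$-model, so this is repairable, but it should be said.
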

\begin{proof}
If $\mathcal M$ is a $\beta$-model, then $\mathcal M$ is already 
closed under $\Delta^1_1$-randoms in the sense described above, 
because the statement $\exists R (R \text{ is }\Delta^1_1(Z)\text{-random})$
is a true $\Sigma^1_1(Z)$ statement, and any witness to its truth 
computes such an $R$.

On the other hand, if $\mathcal M$ is not a $\beta$-model, then there is a tree 
$S \in M$ such that $\mathcal M$ believes $S$ to be well-founded, but 
in fact $S$ is ill-founded.  Without loss of generality, assume that 
$Z \geq_T S$; otherwise we end up with a $\Delta^1_1$-random relative 
to $Z\oplus S$.  There is a $Z$-computable procedure which, 
given any truly well-founded tree as input, produces an element of $\kO^Z$ 
which bounds its rank.  Apply this procedure to $S$ to produce a 
pseudo-ordinal $a^\ast \in (\kO^\ast)^Z$.  Then $\mathcal M$ thinks 
that $a^\ast$ is an ordinal.  Let $T$ be any $Z$-computable,
alternating, $(a^\ast+1)$-ranked
tree such that each level-one subtree $T_n$ is $a^\ast$-ranked.  We 
can assume $T$ has a union at the root, though the symmetric choice would also work. 
Let $h$ be the nice decoration generator which produces codes for 
$P_b^Z$ and $N_b^Z$ as follows (this is just the relativized form of what 
was defined above).
\begin{align*}
P_{b+k}^Z &= \{X : \text{$X$ is not $MLR^{H_b^Z}$, but for all $c<_\ast^Z b$, $X$ is $MLR^{H_c^Z}$}\} \cap \{X : X <_{\text{lex}} H_b^Z\}\\
N_{b+k}^Z &= \{X : \text{$X$ is not $MLR^{H_b^Z}$, but for all $c<_\ast^Z b$, $X$ is $MLR^{H_c^Z}$}\} \cap \{X : X \geq_{\text{lex}} H_b^Z\}
\end{align*}
As above, we also define $P_b^Z$ and $N_b^Z$ to be $b$-ranked codes for the 
empty set in case $b$ is within $k$ successors of a limit ordinal.
  Now consider the tree
$\Decorate^Z(T,h)$.  Is it completely determined?

Suppose it is not completely determined; let $X$ be an element that does not have 
an evaluation map.  Since $\CDM+\WWKL$ implies $\LwCA$, every element of 
$HYP(X\oplus Z)$ is in $\mathcal M$.  So by Lemma \ref{lem:2}, 
for any $b \in \kO^Z$, $X\not \in |P_b^Z| \cup |N_b^Z|$  (if it were in this set, it 
would have a $HYP(X\oplus Z)$ evaluation map).  But this means that 
$X$ is $\Delta^1_1$-random relative to $Z$, since each non-random
belongs to some $|P_b^Z|\cup|N_b^Z|$.

So suppose that $\Decorate^Z(T,h)$ is completely determined.  Then by $\CDM$, it 
has a measure decomposition.  We claim that any element $R$ that 
is $1$-random relative to the measure decomposition is in fact 
$\Delta^1_1$-random relative to $Z$.  It suffices to show that the 
measure decomposition computes $H_b^Z$ for all $b \in \kO^Z$.  
Fix $b \in \kO^Z$ with $b <_\ast^Z a^\ast$ and observe that 
$\Decorate^Z(P_{b+k},h)$ appears as a level-one subtree of 
$\Decorate^Z(T,h)$.  Thus, by examining the definition of $P_{b+k}$, 
which has an intersection at the root and $\{X: X<_{\text{lex}} H_b^Z\}$ 
as a level-one subtree, we see that
 $\Decorate^Z(\{X : X<_{\text{lex}} H_b^Z\},h)$ appears 
as a level-two subtree of $\Decorate^Z(T,h)$.  (Here of course, 
$\{X : X <_{\text{lex}} H_b^Z\}$ is represented using an 
approximately $b$-ranked formula of $L_{\omega_1,\omega}$, but 
this formula contributes computational, not topological, complexity.)
Therefore, there is an $L^1$ function $f$ included 
in the measure decomposition which is equal to the characteristic 
function of $\Decorate^Z(\{X : X<_{\text{lex}} H_b^Z\},h)$ 
almost everywhere.  We claim that 
$\int f = H_b^Z$, where here we regard $H_b^Z$ as a number in $[0,1]$ 
given 
by its binary expansion.  Using $\WWKL$, 
it suffices to provide another $L^1$ function $g$
which has $\int g = H_b^Z$ and such that $g$ 
is equal to the characteristic 
function of $\Decorate^Z(\{X : X<_{\text{lex}} H_b^Z\},h)$ 
almost everywhere.  Let $g$ be the canonical measurable 
characteristic function of the open set $\bigcup_{p <_{\text{lex}} H_b^Z} [p]$.
Then by Lemma \ref{lem:kO_induction}, for any $X$ that is $MLR^{H_b^Z}$, 
since $X \not\in |P_d^Z|\cup|N_d^Z|$ for any $d<_\ast^Z b+k$, 
we have $X \in \Decorate^Z(\{X : X<_{\text{lex}} H_b^Z\},h)$ if 
and only if $X <_{\text{lex}} H_b^Z$, which is true if and only if 
$g(X) = 1$.  This completes the proof.
\end{proof}

\bibliographystyle{alpha}
\bibliography{bibliography}

\end{document}